\documentclass[11pt]{amsart}
\usepackage{hyperref}
\usepackage{amsfonts,mathrsfs,bbm,latexsym,rawfonts,amsmath,amssymb,amsthm,epsfig}
\usepackage{fullpage, setspace, color}
\usepackage{indentfirst}

\newtheorem{theorem}{Theorem}[section]
\newtheorem{lemma}[theorem]{Lemma}
\newtheorem{corollary}[theorem]{Corollary}
\newtheorem{property}[theorem]{Property}
\newtheorem{thm}{Theorem}[section]

\newtheorem{remark}[thm]{Remark}
\newtheorem{definition}{Definition}[section]
\numberwithin{equation}{section}





\def\<{\left\langle} \def\>{\right\rangle}
\def\({\left(} \def\){\right)}



\makeatletter 
\renewcommand{\section}{\@startsection{section}{1}{0mm}
	{-\baselineskip}{0.5\baselineskip}{\bf\leftline}}
\makeatother

\title
{Dynamics and spreading speed of a reaction-diffusion system with advection modeling West Nile virus}


\subjclass[2010]{Primary: 35K51; Secondary: 35R35, 35B40, 92D30.}
\keywords{West Nile virus, Advection, Free boundary, Basic reproduction number, Vanishing-spreading, Asymptotic spreading speed}


\email{chengchengcheng16@mails.ucas.edu.cn}
\email{zhzheng@amt.ac.cn}


\thanks{The authors are supported by the NSF of China(No. 11671382), CAS Key Project of Frontier Sciences(No. QYZDJ-SSW-JSC003), the Key Lab. of Random Complex Structures and Data Sciences CAS and National Center for Mathematics and Interdisplinary Sciences CAS}


\thanks{$^*$ Corresponding author: Zuohuan Zheng}

\begin{document}
	\maketitle
	
	\centerline{ Chengcheng Cheng}
	\medskip
	{\footnotesize
		\centerline{Academy of Mathematics and Systems Science, Chinese Academy of Sciences}
		\centerline{Beijing 100190, China}
		\centerline{University of Chinese Academy of Sciences}
		\centerline{Beijing 100049, China}
	} 
	
	\medskip
	
	\centerline{ Zuohuan Zheng$^*$}
	\medskip
	{\footnotesize
		\centerline{Academy of Mathematics and Systems Science, Chinese Academy of Sciences}
		\centerline{Beijing 100190, China}
		\centerline{University of Chinese Academy of Sciences}
		\centerline{Beijing 100049, China}
		\centerline{College of Mathematics and Statistics, Hainan Normal University}
		\centerline{Haikou, Hainan 571158, China}
	}
	
	\bigskip
	
	
	\begin{abstract}
	This paper aims to explore the temporal-spatial spreading and asymptotic behaviors of West Nile virus (WNv) by a reaction-advection-diffusion system with free boundaries, especially considering the impact of advection term on the extinction and persistence of West Nile virus. We define the spatial-temporal risk index $R^{F}_{0}(t)$ with the advection rate ($\mu$) and the general basic disease reproduction number $R^D_0$ to get the vanishing-spreading dichotomy regimes of West Nile virus. We show that there
	exists a threshold value $\mu^{*}$ of the advection rate, and obtain the threshold results of                                                                                                                                                                                                                                                                                                                                                                                                                                                                                                                                                                                                                                                                                                                                                                                                                                                                                                                                                                                                                                                                                                                                                                                                                                                                                                                                                                                                                                                                                                                                                                                                                                                                                                                                                                                                                                                                                                                                                                                                                                                                                                                                                                                                                                                                                                                                                                                                                                                                                                                                                                                                                                                                                                                                                                                                                                                                                                                                                                                                                                                                                                                                                                                                                                                                                                                                                                                                                                                                                                                                                                                                                                                                                                                                                                                                                                                                                                                                                                                                                                                                                                                                                                                                                                                                                                                                                                                                                                                                                                                                                                                                                                                                                                                                                                                                                                                                                                                                                                                                                                                                                                                                                                                                                                                                                                                                                                                                                                                                                                                                                                                                                                                                                                                                                                                                                                                                                                                                                                                                                                                                                                                                                                                                                                                                                                                                                                                                                                                                                                                                                                                                                                                                                                                                                                                                                                                                                                                                                                                                                                                                                                                                                                                                                                                                                                                                                                                                                                                                                                                                                                                                                                                                                                                                                                                                                                                                                                                                                                                                                                                                                                                                                                                                                                                                                                                                                                                                                                                                                                                                                                                                                                                                                                                                                                                                                                                                                                                                                                                                                                                                                                                                                                                                                                                                                                                                                                                                                                                                                                                                                                                                                                                                                                                                                                                                                                                                                                                                                                                                                                                                                                                                                                                                                                                                                                                                                                                                                                                                                                                                                                                                                                                                                                                                                                                                                                                                                                                                                                                                                                                                                                                                                                                                                                                                                                                                                                                                                                                                                                                                                                                                                                                                                                                                                                                                                                                                                                                                                                                                                                                                                                                                                                                                                                                                                                                                                                                                                                                                                                                                                                                                                                                                                                                                                                                                                                                                                                                                                                                                                                                                                                                                                                                                                                                                                                                                                                                                     $\mu^*$. When the spreading occurs, we investigate the asymptotic dynamical behaviors of the solution in the long run and first give a sharper estimate that the asymptotic spreading speed of the leftward front is less than the rightward front in the case of $0<\mu<\mu^*$. At last, we give some numerical simulations to identify the significant effects of the advection.
	\end{abstract}

	\section{Introduction}\label{s1}
	\noindent
	Infectious disease has become an essential factor which threatens people's life nowadays. West Nile virus (WNv), originated from Africa, is one of the fatal mosquito-borne contagious diseases that has widely spread all over the north America since it broke out in 1999 for the first time \cite{koar2003west,nash2001out}. In 1999--2001, West Nile virus lead to 149 cases of clinical neurologic disease in humans and 11932 deaths in birds in the United States \cite{koma2003exper}. Since 2008, many central European countries were invaded by West Nile virus which resulted in several hundreds of human and animal neuroinvasive cases  \cite{rudo2017west}. In the 77 years since West Nile virus was discovered, this virus has spread to a large region of the earth and is considered one of the most important causative agent of viral encephalitis all over the world \cite{chancey2015global}.
	Therefore, it is urgent to understand how the disease can spread spatially to large region to cause large-scale epidemic and investigate the vanishing-spreading dichotomy regimes of West Nile virus.
	
	In order to investigate the spreading dynamics of WNv, Lewis et al.~\cite{lewis2006traveling} first discussed a reaction-diffusion sysytem of WNv with diffusion terms describing the movement of birds and mosquitoes. Then Maidana and Yang \cite{maidana2009spatial} proposed a reaction-advection-diffusion equation to study the vanishing and spreading of WNv across America. Recently, Li et al. \cite{Li2019vertical} formulated and analyzed a periodic delay differential equation of WNv model with vertical transmission. There are also other studies about WNv, such as, Wonham et al. \cite{wohn2006transmission}, Hartley et al. \cite{hartley2012effect} and references therein.
	
	The free boundary problems associated with the ecological models have attracted considerable research interests in the past, and several results have been applied to lots of  fields, such as \cite{vuik1985numerical, yi2004one, amadori2005singular, chen2000free, chen2003free, tao2005elliptic}. There are also a lot of reaction-diffusion biological models to use free boundary for studying. Du and Lin \cite{du2010spreading} first investigated a diffusive logistic equation with a free boundary in one dimension space.
	Wang \cite{wang2014free} studied the asymptotic behaviors about some free boundary problems for the Lotka-Volterra model of two species. Tarboush et al.\cite{tarboush2017spreading} considered a WNv problem with a coupled system, which described the diffusion of birds by a partial differential equation and the movement of mosquitoes by an ordinary differential equation.  Lin and Zhu \cite{lin2017spatial} put forward a reaction-diffusion system with moving fronts to investigate the spreading dynamics of WNv between mosquitoes and host birds across North America.
	
	Although there have been many works to investigate the  propagation of WNv, advection terms have not received much attention in studying the spreading and vanishing of WNv. However, advection, especially the bird advection, plays an important role in spreading of WNv. For instance, in order to investigate the spreading of WNv in North America, it was observed in \cite{maidana2009spatial} that WNv appeared for the first time in New York city in 1999. In 2004, WNv was detected among birds in California. It has been spread across almost the whole America continent since it broke out in America. Thus, the advection movements of birds and diffusion lead to the biological invasion of WNv from the east to the western coast of the USA.
	Therefore, it is much worthwhile to take into consideration the advection movement in modeling West Nile virus.
	
	Considering the previous preliminaries, in order to more explicitly describe the spreading and vanishing of WNv, we are planning to study a reaction-advection-diffusion epidemic model with free boundaries to describe the spreading and vanishing of WNv on the basis of \cite{maidana2009spatial}. Since the movements of the birds and mosquitoes change with time, we assume that the habitats of the birds and mosquitoes have moving boundaries. And the impact of the advection movement on the asymptotic spreading speeds of the double fronts is mainly investigated when spreading happens. Since the effect of advection rate on the mosquitoes is small enough, so we only consider the impact of advection movement on the birds.
	
	For simplicity, let
	\begin{equation}\label{1.2}
	\begin{aligned}
	&a_{1}:=\frac{\alpha_{1} \beta}{N_{1}}, a_{2}:=\frac{\alpha_{2} \beta}{N_{1}}. \end{aligned}
	\end{equation}
	Here what $\alpha_{1},\alpha_{2},\beta$ represent will be explained later.
	Now we are going to discuss the following simplied reaction-advection-diffusion system with free boundaries of WNv between mosquitoes and birds
	\begin{equation}\label{1.3}
	\left\{\begin{array}{ll}{U_{t}=D_{1} U_{x x}-\mu U_{x}+a_{1}\left(N_{1}-U\right) V-\gamma U,} & {g(t)<x<h(t), \enspace t>0,} \\ {V_{t}=D_{2} V_{x x}+a_{2}\left(N_{2}-V\right) U-d V,} & {g(t)<x<h(t), \enspace t>0,} \\ {U(x, t)=V(x, t)=0,} & {x=h(t) \text { or } x=g(t),\enspace t>0,} \\ {h(0)=h_{0},\enspace h^{\prime}(t)=-\nu U_{x}(h(t), t),} & {t>0,} \\ {g(0)=-h_{0}, \enspace g^{\prime}(t)=-\nu U_{x}(g(t), t),} & {t>0,} \\ {U(x, 0)=U_{0}(x), \quad V(x, 0)=V_{0}(x),} & {-h_{0} \leq x \leq h_{0},}\end{array}\right.
	\end{equation}
	where $U(x,t)$ and $V(x, t)$ represent the spatial infected densities of birds and mosquitoes at location $x$ and time $t$, respectively; $N_1$ and $N_2$ are the total carrying capacities of the birds and mosquitoes ; $D_{1}$ and $D_{2}$ represent the diffusion rates of the birds and mosquitoes and $D_2\ll D_1$; $\mu $ is the advection rate caused by the wind on the birds; $\alpha_{1}$ and $\alpha_{2}$ represent the WNv transmission probabilities per bite to birds and mosquitoes; $\beta$ is the biting rate of mosquitoes to birds; $\gamma$ is the recovery rate of birds from infection; $d$ is the death rate of the mosquitoes. The moving region $(g(t),h(t))$ represents the infected habitat of WNv. Suppose that the free boundaries satisfy Stefan conditions: $$g^{\prime}(t)=-\nu U_{x}(g(t), t)$$ and  $$ h^{\prime}(t)=-\nu U_{x}(h(t), t),$$ where $\nu$ is a positive constant which represents the boundary expanding capacity.
	Furthermore, we assume  that the initial conditions $U_{0}$ and $V_{0}$ satisfy
	\begin{equation}\label{1.4}
	\left\{\begin{array}{ll}{U_{0} \in C^{2}\left[-h_{0}, h_{0}\right],} & {U_{0}(\pm h_{0})=0, \enspace 0<U_{0}(x) \leq N_{1} \text { in }\left(-h_{0}, h_{0}\right),} \\ {V_{0} \in C^{2}\left[-h_{0}, h_{0}\right],} & {V_{0}(\pm h_{0})=0,\enspace 0<V_{0}(x) \leq N_{2} \text { in }\left(-h_{0}, h_{0}\right).}\end{array}\right.
	\end{equation}
	Considering small advection rate and high risk at infinity, in this paper, we make the following hypothesis $$(H) \quad\quad\quad {a_{1}a_{2}N_{1}N_{2}}> d\gamma, |\mu|<\mu^{*},$$
	where $\mu^*:=2\sqrt{D_{1}(\dfrac{a_{1}a_{2}N_{1}N_{2}}{d}-\gamma)},$ which is defined as a threshold value of the advection (see to Section \ref{s3}).
	What is more, as Guo and Wu \cite{guo2012free}, we define $\lim\limits_{t\rightarrow +\infty}\limits\frac{-g(t)}{t}$ and  $\lim\limits_{t\rightarrow +\infty}\limits\frac{h(t)}{t}$ as the leftward and rightward asymptotic spreading speeds, respectively.

	As is discussed before, the main goal of this paper is to explore the propagation of WNv and investigate the effect of the advection movement on the asymptotic behaviors by a more general epidemic system (\ref{1.3}).  In view of the bird avection movement ($\mu$), the diffusion of birds and mosquitoes ($D_1,D_2$)  and moving infected regions ($(g(t),h(t))$), this reaction-advection-diffusion model is more coincident with the laws of WNv than \cite{lewis2006traveling} and \cite{lin2017spatial}. By introducing the spatial-temporal risk index $R^F_{0}(t)$ with respect to advection and time as a threshold condition, the vanishing-spreading dichotomy regimes of WNv are obtained (see to Theorem \ref{t54}). It is worthy of note that spreading speed is an important factor to influence the frontier propagation rate of WNv, while many previous studies can not calculate the spreading speed of the epidemic. Our main result is that we give an estimate that the asymptotic spreading speed of the leftward front is less than the rightward front for $0<\mu<\mu^*$ when the spreading occurs (see to Theorem \ref{t56}).
	
	This paper is organized as follows: in section 2, we prove the existence and uniqueness of the solution for system (\ref{1.3}) by contraction mapping theorem, standard $L^p$ estimate and Sobolev embedding theorem. In section 3, we introduce spatial-temporal risk index $R^F_{0}(t)$ and the general basic reproduction number $R^D_0$. In section 4 and 5, we discuss the vanishing-spreading dichotomy regimes of WNv by applying $R^F_{0}(t)$ and $R^D_0$. In section 6, we mainly give an explicit estimate of the asymptotic spreading speeds about the leftward front and the rightward front compared with the corresponding reaction-diffusion model without advection. In section 7, several numerical simulations are given to illustrate our analytic results. At last, we sum up this paper by a brief discussion.
	\section{Existence and uniqueness}\label{s2}
	\noindent
	In this section, we first give the basic results about the local existence and uniqueness for the problem (\ref{1.3}) with the initial conditions (\ref{1.4}).
	
	\begin{theorem}\label{t21}\quad
		For any given $(U_{0},V_{0})$ satisfying (\ref{1.4}) and any $\alpha \in (0,1)$, there exists $T>0$ such that the system (\ref{1.3}) admits  a unique solution
		\begin{equation}\label{2.1}
		\begin{aligned}
		&(U,V;g,h)\in ({C^{1+\alpha,(1+\alpha)/2}}(\overline{D_{T}}))^{2}\times (C^{1+\alpha/2}([0,T]))^{2}
		\end{aligned}
		\end{equation}
		Further,
		\begin{equation}\label{2.2}
		||U||_{{C^{1+\alpha,(1+\alpha)/2}}(\overline{D_{T}})}+||V||_{{C^{1+\alpha,(1+\alpha)/2}}(\overline{D_{T}})}
		+||g||_{C^{1+\alpha/2}([0,T])}+||h||_{C^{1+\alpha/2}([0,T])}\leq C_{1}.
		\end{equation}
		Where
		\begin{equation}\label{2.3}
		D_{T}=\{(x,t)\in \mathbb{R}^{2}:x\in(g(t),h(t)),t\in(0,T]\},
		\end{equation}
		positive constants $T, C_{1}$ depend only on $ ||U_0||_{C^{2}([-h_0,h_0])}$, $||V_0||_{C^2([-h_0,h_0])}$,  $h_0$ and $ \alpha$.
	\end{theorem}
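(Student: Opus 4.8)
The standard approach is to transform the free-boundary problem into a problem on a fixed domain by straightening the moving boundaries, then invoke the contraction mapping theorem in a suitable Hölder space. Concretely, I would introduce a change of variables that maps the time-dependent interval $(g(t),h(t))$ onto a fixed reference interval, say $[-h_0,h_0]$. A convenient choice is $y = \frac{2h_0}{h(t)-g(t)}\,x - h_0\frac{h(t)+g(t)}{h(t)-g(t)}$, so that the free boundaries $x=g(t)$ and $x=h(t)$ become the fixed endpoints $y=-h_0$ and $y=h_0$. Setting $u(y,t)=U(x,t)$ and $v(y,t)=V(x,t)$, the system \eqref{1.3} transforms into a reaction-advection-diffusion system on $[-h_0,h_0]\times(0,T]$ whose coefficients now depend on $g,h,g',h'$ (appearing through the Jacobian of the transformation); the advection term $-\mu U_x$ and the diffusion term $D_1U_{xx}$ pick up factors depending on $h(t)-g(t)$, but the principal part stays uniformly parabolic for small $T$ since $h(t)-g(t)$ stays close to $2h_0$.

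The plan is then to set up a contraction mapping. First I would fix $T>0$ small and work in the complete metric space
\begin{equation*}
\Delta_T=\left\{(u,v,g,h): u,v\in C^{0}(\overline{D^0_T}),\ g,h\in C^{1}([0,T]),\ \text{matching \eqref{1.4} at } t=0\right\}
\end{equation*}
equipped with an appropriate norm, where $D^0_T=[-h_0,h_0]\times[0,T]$. Given $(g,h)$ in this class with $g(0)=-h_0$, $h(0)=h_0$, I would solve the two linear parabolic equations for $(u,v)$ with the transformed coefficients and zero Dirichlet data, obtaining via standard $L^p$ theory a unique solution with interior and boundary regularity. Applying the $L^p$ estimate and the Sobolev embedding $W^{1,2}_p\hookrightarrow C^{1+\alpha,(1+\alpha)/2}$ for $p$ large, I get $u,v\in C^{1+\alpha,(1+\alpha)/2}(\overline{D^0_T})$ with a bound depending only on the data. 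Then I would define new boundary functions $\tilde h,\tilde g$ by integrating the Stefan conditions, $\tilde h(t)=h_0-\nu\int_0^t u_y(h_0,s)\,(\text{Jacobian factor})\,ds$ and similarly for $\tilde g$, which lands in $C^{1+\alpha/2}([0,T])$ because $u_y$ on the boundary inherits $C^{\alpha/2}$ regularity in $t$. The map $(g,h)\mapsto(\tilde g,\tilde h)$ is the one I want to show is a contraction.

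The main obstacle is establishing the contraction estimate for the boundary map, which is where the Stefan condition couples the geometry to the PDE. The difficulty is twofold: (i) the transformed coefficients depend on $g,h$ in a nonlinear way, so I must control how the solution $(u,v)$—and in particular its boundary normal derivative $u_y(\pm h_0,t)$—varies when $(g,h)$ is perturbed; and (ii) the Stefan velocities are defined through this trace of the gradient, which is one derivative less regular than the interior estimate naively provides, so I need the boundary Schauder/$L^p$ estimate rather than just interior regularity. I would handle this by taking two boundary inputs $(g_1,h_1)$ and $(g_2,h_2)$, writing the equation satisfied by the difference of the corresponding solutions, and estimating it in the $C^{1+\alpha,(1+\alpha)/2}$ norm; because every coefficient difference carries a factor that is $O(T^{\theta})$ for some $\theta>0$ (the perturbations all vanish at $t=0$ and grow at most linearly), choosing $T$ sufficiently small forces the Lipschitz constant of the boundary map below $1$. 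The contraction mapping theorem then yields a unique fixed point, which is the desired solution, and the a priori bound \eqref{2.2} is exactly the uniform estimate accumulated along the way, with $T$ and $C_1$ depending only on $\|U_0\|_{C^2}$, $\|V_0\|_{C^2}$, $h_0$, and $\alpha$. Transforming back to the $x$ variable recovers the statement for \eqref{1.3}.
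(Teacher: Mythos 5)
Your proposal is correct and follows essentially the same route the paper takes: the paper's own proof is only a sketch (straighten the free boundary, apply the contraction mapping theorem with standard $L^p$ estimates and Sobolev embedding, then Schauder estimates for regularity, deferring details to Chen--Friedman, Guo--Wu, and Wang--Zhao), and your argument is a fleshed-out version of exactly that scheme, including the standard device of iterating on the boundary pair $(g,h)$ via the Stefan condition and gaining the contraction factor from the $O(T^{\theta})$ smallness of coefficient perturbations.
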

	\begin{proof}\quad	Now we only provide a simple sketch to prove this theorem. First, straighten the free boundary; then, use the contraction mapping theorem, standard $L^p$ estimate and Sobolev embedding theorem to get the local existence and uniqueness of $(U, V; g, h)$; finally, apply the Schauder estimates to obtain the regularity of the solution. The detailed proof can refer to Theorem 2.1 in \cite{chen2000free}, Lemma 2.1 in \cite{guo2012free} or Theorem 2.1 Wang and Zhao \cite{wang2017free}.
	\end{proof}
	
	In order to prove the boundness of the local solution, we need to use the following Comparison Principle to eatimate $U(x,t)$, $V(x,t)$ and the free boundaries $x=g(t)$, $x=h(t)$. The proof is similar to Lemma 3.5 in \cite{du2010spreading}, so we omit it here.
	\begin{lemma}[Comparison Principle]\label{l22}\quad
		Assume that $T\in(0,+\infty)$, $\overline{h}(t), \overline{g}(t)\in C^{1}([0,T])$, $\overline{U},\overline{V}\in C(\overline{D_{T}^{*}})\bigcap C^{2,1}(D_{T}^{*})$, and
		\begin{equation}\label{2.4}
		\left\{\begin{array}{ll}{\overline{U}_{t}-D_{1} \overline{U}_{x x} \geq -\mu\overline{U}_{x}+a_{1}\left(N_{1}-\overline{U}\right) \overline{V}-\gamma \overline{U},} & {\overline{g}(t)<x<\overline{h}(t), \enspace 0<t<T,} \\ {\overline{V}_{t}-D_{2} \overline{V}_{x x} \geq a_{2}\left(N_{2}-\overline{V}\right) \overline{U}-d \overline{V},} & {\overline{g}(t)<x<\overline{h}(t), \enspace 0<t<T,} \\ {\overline{U}(0, t) \geq U(0, t), \enspace \overline{V}(0, t) \geq V(0, t),} & {0<t<T,} \\ {\overline{U}(x, t)= 0,\overline{V}(x, t)= 0,} & {x=\overline{g}(t)\enspace or\enspace \overline{h}(t), \enspace 0<t<T,} \\ {\overline{h}^{\prime}(t) \geq-\nu \overline{U}_{x}(\overline{h}(t), t),\overline{g}^{\prime}(t) \leq-\nu \overline{U}_{x}(\overline{g}(t), t),} & {\enspace 0<t<T,} \\\overline{U}(x, 0)\geq {U_{0}(x), \enspace\overline{V}(x, 0)\geq V_{0}(x),} & {-h_{0} \leq x \leq h_{0},}\end{array}\right.
		\end{equation}
		then the solution  $(U,V;g,h)$ of (\ref{1.3}) satisfies
		\begin{equation}\label{2.5}
		\begin{aligned}
		&\overline{ U}(x,t)\geq {U}(x,t), \enspace \overline{V}(x,t)\geq {V}(x,t),
		\\ & \overline{h}(t)\geq {h}(t),\enspace {g}(t)\geq \overline{ g}(t), for\enspace g(t)\leq x\leq h(t),t\in(0,T],
		\end{aligned}
		\end{equation}
		where $D_{T}^{*}=\{(x,t)\in\mathbb{R}^{2}: x\in(\overline g(t),\overline{h}(t)),t\in(0,T]\}$.
	\end{lemma}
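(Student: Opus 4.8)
The plan is to establish the domain inclusion $[g(t),h(t)]\subseteq[\overline g(t),\overline h(t)]$ and the pointwise ordering $\overline U\ge U$, $\overline V\ge V$ simultaneously, exploiting that the system is cooperative. The structural input I would record first is that, on the biologically relevant range $0\le U\le N_1$, $0\le V\le N_2$ (which the solution of (\ref{1.3}) satisfies, as one checks by comparison with the constant supersolution $(N_1,N_2)$), the reaction terms are quasimonotone increasing in the off-diagonal variable: $\partial_V[a_1(N_1-U)V]=a_1(N_1-U)\ge0$ and $\partial_U[a_2(N_2-V)U]=a_2(N_2-V)\ge0$. Setting $w:=\overline U-U$ and $z:=\overline V-V$ and subtracting the two pairs of (in)equalities, one gets on the common domain a cooperative system of differential inequalities $w_t-D_1 w_{xx}+\mu w_x+(a_1V+\gamma)w\ge a_1(N_1-\overline U)\,z$ and $z_t-D_2 z_{xx}+(a_2U+d)z\ge a_2(N_2-\overline V)\,w$, whose coupling coefficients are nonnegative wherever $\overline U\le N_1$, $\overline V\le N_2$; at any putative first point where $w$ or $z$ turns negative one has $\overline U<U\le N_1$ (resp.\ $\overline V<V\le N_2$), so the relevant coefficient has the correct sign there.

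Next I would reduce to strictly ordered initial fronts. Since the fronts may coincide at $t=0$, I would first replace $\overline h,\overline g$ by slightly enlarged fronts (perturbing the boundary-expansion data as in \cite{du2010spreading}), prove the strict comparison for the perturbed supersolution, and recover the stated result by letting the perturbation tend to zero. For the perturbed problem set
$$t^*:=\sup\{t\in(0,T]:\ g(s)>\overline g(s)\ \text{and}\ h(s)<\overline h(s)\ \text{for all }s\in(0,t]\}.$$
On $(0,t^*)$ the inclusion $[g(t),h(t)]\subset(\overline g(t),\overline h(t))$ holds, so $w$ and $z$ are defined on the closure of the solution's domain. On its parabolic boundary together with the line $x=0$ the hypotheses force $w,z\ge0$: at $t=0$ from $\overline U(\cdot,0)\ge U_0$, $\overline V(\cdot,0)\ge V_0$; at $x=0$ from the assumed ordering $\overline U(0,t)\ge U(0,t)$, $\overline V(0,t)\ge V(0,t)$; and at $x=g(t)$ or $x=h(t)$ from $U=V=0$ there while $\overline U,\overline V\ge0$ (these points being interior to $(\overline g,\overline h)$). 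The strong maximum principle for cooperative parabolic systems then yields $w,z>0$ in the interior for $t\in(0,t^*)$.

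It remains to show $t^*=T$, i.e.\ that the fronts cannot cross; this is the crux. Suppose $t^*<T$ and, without loss of generality, that $h(t^*)=\overline h(t^*)$ while $h(t)<\overline h(t)$ for $t<t^*$. At the contact point $P=(\overline h(t^*),t^*)=(h(t^*),t^*)$ both $\overline U$ and $U$ vanish, so $w(P)=0$ while $w>0$ in the interior nearby. Hopf's boundary lemma (valid since $D_1>0$ makes the operator uniformly parabolic and the front is $C^1$) forces $w_x(P)<0$, i.e.\ $\overline U_x(\overline h(t^*),t^*)<U_x(h(t^*),t^*)$. Using $\nu>0$ and the Stefan inequalities one obtains $\overline h'(t^*)\ge-\nu\,\overline U_x(\overline h(t^*),t^*)>-\nu\,U_x(h(t^*),t^*)=h'(t^*)$, so $\overline h$ separates from $h$ at $t^*$, contradicting that $h$ reaches $\overline h$ from below at $t^*$ (which requires $h'(t^*)\ge\overline h'(t^*)$). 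A symmetric argument, using the outward normal $-e_x$ and $\overline g'(t^*)\le-\nu\,\overline U_x$, rules out $g(t^*)=\overline g(t^*)$. Hence $t^*=T$, the inclusion and the inequalities $\overline U\ge U$, $\overline V\ge V$ persist on $(0,T]$, the front ordering $\overline h\ge h$, $g\ge\overline g$ follows, and passing to the limit in the perturbation finishes the proof. The main obstacle is exactly this coupling of the PDE comparison with the free-boundary motion: a fixed-domain maximum principle cannot be applied outright, and the non-crossing of the fronts has to be extracted from Hopf's lemma together with the Stefan conditions, which is the delicate point.
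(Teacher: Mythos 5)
Your core skeleton is exactly the argument the paper relies on: the paper in fact prints no proof of Lemma \ref{l22}, deferring to Lemma 3.5 of Du--Lin \cite{du2010spreading}, and that proof is precisely your combination of a cooperative comparison on the common domain, a first-contact time $t^*$, the strong maximum principle, and Hopf's lemma played against the Stefan inequalities $\overline{h}'\ge -\nu\overline{U}_x$, $\overline{g}'\le -\nu\overline{U}_x$. Your handling of the crux is correct: at a contact point $P$ Hopf's lemma gives $w_x(P)<0$, hence $\overline{h}'(t^*)\ge -\nu\overline{U}_x(P)>-\nu U_x(P)=h'(t^*)$, contradicting $h'(t^*)\ge\overline{h}'(t^*)$; and your pointwise observation that at a first sign change one automatically has $\overline{U}<U\le N_1$ (so the off-diagonal coefficient is nonnegative exactly where it is evaluated) is a legitimate way to run the cooperative maximum principle without assuming bounds on the supersolution.

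The genuine defect is the regularization step. You propose to ``replace $\overline{h},\overline{g}$ by slightly enlarged fronts'' to force strict ordering at $t=0$. This step fails: $\overline{U},\overline{V}$ are defined only for $\overline{g}(t)\le x\le\overline{h}(t)$ and must vanish at the (new) fronts, so enlarging the fronts requires extending them. Extension by zero destroys the supersolution property: at the old front $\overline{U}_x(\overline{h}(t)^-,t)\le 0$ jumps to $0$, so the distributional $\overline{U}_{xx}$ acquires a nonnegative Dirac mass and $-D_1\overline{U}_{xx}$ a nonpositive one, violating the first inequality of \eqref{2.4} across $x=\overline{h}(t)$ (the extension is not even $C^{2,1}$); and when $\overline{U}_x(\overline{h}(t),t)<0$ no nonnegative $C^1$ extension vanishing at a larger front can exist, since it would have to dip negative immediately past $\overline{h}(t)$. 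In \cite{du2010spreading} the perturbation goes the other way: one shrinks the \emph{solution's} data, $h_0\mapsto(1-\varepsilon)h_0$, $\nu\mapsto(1-\varepsilon)\nu$, $(U_0,V_0)$ slightly smaller, so that the perturbed solution starts strictly inside $(\overline{g}(0),\overline{h}(0))$; your $t^*$-argument then applies verbatim, and one lets $\varepsilon\to 0$. That limit is not free -- it requires continuous dependence of $(U,V;g,h)$ on $(h_0,\nu,U_0,V_0)$, which is the real content of the approximation step and is absent from your proposal. Two smaller points: your boundary check uses $\overline{U},\overline{V}\ge 0$ at the interior points $x=g(t),h(t)$, which is not a hypothesis of \eqref{2.4} and needs its own (easy) maximum-principle argument; and the bounds $U\le N_1$, $V\le N_2$ should not be quoted as ``comparison with the constant supersolution'' -- in the paper that comparison (Lemma \ref{l25}) is derived \emph{from} Lemma \ref{l22} -- but obtained directly from the maximum principle on the given domain, which is straightforward since the reaction terms are negative where $U=N_1$ or $V=N_2$. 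Note finally that in every application of the lemma in this paper the initial fronts are strictly ordered, so the strict case of your argument would already suffice for the paper's purposes even without the approximation.
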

	
	\begin{remark}\label{r23}\quad
		Assume that $T\in(0,+\infty)$, $\underline{h}(t), \underline{g}(t)\in C^{1}([0,T])$, $\underline{U}, \underline{V}\in C(\overline{D_{T}^{**}})\bigcap
		C^{2,1}(D_{T}^{**})$, if the reverse inequalities of (\ref{2.4}) are satisfied, then
		\begin{equation}\label{2.6}
		\begin{aligned}
		&\underline{U}(x,t)\leq U(x,t),\enspace \underline{V}\leq V(x,t),
		\\&\underline{h}(t)\leq h(t),\enspace  g(t)\leq\underline{g}(t),for\enspace \underline{g}(t)\leq x\leq\underline{h}(t),t\in(0,T],
		\end{aligned}
		\end{equation}
		where $D_{T}^{**}=\{(x,t)\in\mathbb{R}^{2}: x\in(\underline{g}(t),\underline{h}(t)),t\in(0,T]\}$.
	\end{remark}
	
	\begin{remark}\label{r24}\quad
		If $(\overline{U},\overline{V};\overline{g},\overline{h})$ and $(\underline{U},\underline{V};\underline{g},\underline{h})$ satisfy the above conditions, then they are called the upper and lower solution of (\ref{1.3}), respectively.
	\end{remark}	
	
	The following Lemma gives some estimates of $U(x,t)$ and $V(x,t)$.
	\begin{lemma}\label{l25}\quad
		Assume that $T\in(0,+\infty)$. Let $(U,V;g,h)$ be a solution of (\ref{1.3}) for $t\in(0,T]$, then there exist positive constants $C_{2},C_{3}$ independent of $T$ such that
		\begin{equation}\label{2.7}
		\begin{aligned}
		&0<U(x,t)\leq C_{2}, for\enspace g(t)< x< h(t), 0< t\leq T.
		\\&0<V(x,t)\leq C_{3}, for\enspace g(t)< x< h(t), 0< t\leq T.
		\end{aligned}
		\end{equation}
		Indeed, we can take $C_2=N_1,C_3=N_2$.
	\end{lemma}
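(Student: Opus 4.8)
The plan is to derive both bounds from the parabolic maximum principle, exploiting the fact that the reaction terms in \eqref{1.3} form a \emph{cooperative} (quasimonotone nondecreasing) pair on the rectangle $[0,N_1]\times[0,N_2]$. Indeed $\partial_V\big(a_1(N_1-U)V-\gamma U\big)=a_1(N_1-U)\ge 0$ for $U\le N_1$, and $\partial_U\big(a_2(N_2-V)U-dV\big)=a_2(N_2-V)\ge 0$ for $V\le N_2$. This monotonicity is what allows the componentwise comparisons below to go through, and it identifies the box $[0,N_1]\times[0,N_2]$ as an invariant region for the flow.

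First I would establish positivity. Viewing the partner component as a coefficient and source, the equations read $U_t-D_1U_{xx}+\mu U_x+(a_1V+\gamma)U=a_1N_1V$ and $V_t-D_2V_{xx}+(a_2U+d)V=a_2N_2U$. Since $U=V=0$ on the lateral boundaries $x=g(t),h(t)$ and $U_0,V_0>0$ in $(-h_0,h_0)$ by \eqref{1.4}, I would run a first-contact argument: let $t_0$ be the first time at which $U$ or $V$ reaches the value $0$ at an interior point. Up to $t_0$ both components are nonnegative, so the sources $a_1N_1V$ and $a_2N_2U$ are nonnegative; the strong maximum principle then prevents the corresponding nonnegative solution, with nonnegative source and nonnegative zeroth-order coefficient, from attaining an interior zero unless it vanishes identically, which contradicts the strictly positive initial data. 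Hence $U,V>0$ in the interior for all $t\in(0,T]$.

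With positivity in hand, I would prove the upper bounds by comparison with the constant state $(N_1,N_2)$. Setting $\tilde U:=N_1-U$ gives $\tilde U_t-D_1\tilde U_{xx}+\mu\tilde U_x+(a_1V+\gamma)\tilde U=\gamma N_1\ge0$, with $\tilde U=N_1>0$ on $x=g(t),h(t)$ and $\tilde U(\cdot,0)=N_1-U_0\ge0$; since $a_1V+\gamma>0$, the maximum principle yields $\tilde U\ge0$, i.e. $U\le N_1$. The analogous computation for $\tilde V:=N_2-V$ gives $\tilde V_t-D_2\tilde V_{xx}+(a_2U+d)\tilde V=dN_2\ge0$ with positive boundary and initial data, hence $V\le N_2$. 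Equivalently, one checks directly that the constant pair $(N_1,N_2)$ is an upper solution in the sense of Lemma~\ref{l22}/Remark~\ref{r23} (the spatial derivatives vanish and the reactions reduce to $-\gamma N_1\le0$ and $-dN_2\le0$). This yields the conclusion with $C_2=N_1$ and $C_3=N_2$, which are manifestly independent of $T$.

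The main obstacle is the circular dependence in the positivity step: nonnegativity of $U$ relies on $V\ge0$ and vice versa. The cooperative structure recorded in the first paragraph is precisely what breaks this circularity, since it makes the nonnegative box invariant and lets the simultaneous first-contact argument close. Some care is also needed because the spatial domain $(g(t),h(t))$ is moving; however, as $U$ and $V$ already vanish on the free boundaries by the boundary conditions in \eqref{1.3}, the lateral boundary values are controlled and no uncontrolled boundary flux enters the comparison, so the moving-domain feature causes no essential difficulty.
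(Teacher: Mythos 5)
Your proof is correct, but it reaches the upper bound through a different comparison object than the paper. Both arguments begin the same way: the strong maximum principle gives $U,V>0$ (your first-contact formulation is a spelled-out version of the paper's one-line claim). For the bounds $U\le N_1$, $V\le N_2$, however, the paper compares $(U,V)$ with the solution $(u(t),v(t))$ of the spatially homogeneous ODE system (\ref{2.8}) with initial data $\sup U_0$, $\sup V_0$, invoking Pao's upper--lower solution theorem together with the free-boundary comparison principle of Lemma~\ref{l22}, and then observes that the cooperative structure keeps $(u,v)$ inside $[0,N_1]\times[0,N_2]$. You instead compare directly with the constants, applying the scalar parabolic maximum principle to $N_1-U$ and $N_2-V$ on the given moving domain. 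Your route is more elementary and self-contained: it uses only the scalar maximum principle twice and never needs the system comparison machinery. The paper's ODE intermediary buys something in return: the companion lower ODE system (\ref{2.12}) yields the positive lower bounds (\ref{2.13}), and the same ODE comparison is recycled later in the paper (system (\ref{5.11}) and estimate (\ref{5.12}) in the proof of Theorem~\ref{t56}), where convergence of the ODE solution to $(U^*,V^*)$ is what matters. One caveat on your parenthetical remark: the claim that the constant pair $(N_1,N_2)$ is an upper solution ``in the sense of Lemma~\ref{l22}'' is not literally accurate, since that lemma requires the upper solution to vanish on its free boundaries and to satisfy the Stefan inequalities, which constants do not; this does not affect your main argument, which stands on its own, and the paper's own application of Lemma~\ref{l22} to spatially constant ODE solutions exhibits the same looseness.
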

	\begin{proof}\quad Since $g(t), h(t)$ are fixed, by the strong maximum principle, we can get
		$$U(x,t)>0,V(x,t)>0 \enspace for\enspace(x,t)\enspace \in (g(t),h(t))\times (0,T].$$
		For the problem (\ref{1.3}), consider the following system
		\begin{equation}\label{2.8}
		\left\{\begin{array}{ll}{u^{\prime}=a_{1}(N_{1}-u)v-\gamma u,} & { 0<t\leq T,} \\{v^{\prime}=a_{2}(N_{2}-v)u-d v,} & {0<t\leq T,} \\ {u(0)=\sup\limits_{x\in[-h_{0},h_{0}]}U_{0}(x),}\\{v(0)=\sup\limits_{x\in[-h_{0},h_{0}]}V_{0}(x).} \end{array}\right.
		\end{equation}
		Then $$u(h(t),t)>U(h(t),t)=0, v(h(t),t)>V(h(t),t)=0$$
		for $t\in(0,T].$
		And $$u|_{t=0}\geq U_0(x), v|_{t=0}\geq V_0(x)$$
		for $x\in [-h_{0},h_{0}].$
		Hence, the solution $(u,v)$ of equation (\ref{2.8}) is an upper solution of the system (\ref{1.3}). Apply the upper and lower solutions theorem (Theorem 2.1 in \cite{pao1995reaction}) and Lemma \ref{l22}, we get
		\begin{equation}\label{2.9}
		{u}\geq U(x,t), {v}\geq V(x,t),for\enspace (x,t)\in[g(t),h(t)]\times[0,T].
		\end{equation}
		Since the solution of the ordinary differential equation (\ref{2.8}) satisfies
		\begin{equation}\label{2.10}
		\begin{aligned}
		&{u}\leq \max\limits\{\sup\limits_{x\in[-h_{0},h_{0}]}U_{0}(x),N_{1}\}=N_1,for\enspace (x,t)\in[g(t),h(t)]\times[0,T].
		\\&{v}\leq \max\limits\{\sup\limits_{x\in[-h_{0},h_{0}]}V_{0}(x),N_{2}\}=N_2,for\enspace (x,t)\in[g(t),h(t)]\times[0,T].
		\end{aligned}
		\end{equation}
		Therefore,
		\begin{equation}\label{2.11}
		\begin{aligned}
		&{U}\leq \max\limits\{\sup\limits_{x\in[-h_{0},h_{0}]}U_{0}(x),N_{1}\}=N_1,for\enspace (x,t)\in[g(t),h(t)]\times[0,T].
		\\&{V}\leq \max\limits\{\sup\limits_{x\in[-h_{0},h_{0}]}V_{0}(x),N_{2}\}=N_2,for\enspace (x,t)\in[g(t),h(t)]\times[0,T].
		\end{aligned}
		\end{equation}
		Similarly, considering the following system
		\begin{equation}\label{2.12}
		\left\{\begin{array}{ll}{\underline{u}^{\prime}=a_{1}(N_{1}-\underline{u})\underline{v}-\gamma \underline{u},} & { 0<t\leq T,} \\{\underline{v}^{\prime}=a_{2}(N_{2}-\underline{v})\underline{u}-d\underline{v},} & {0<t\leq T,} \\ {\underline{u}(0) =\inf\limits_{x\in[-h_{0},h_{0}]}U_{0}(x),} & {t>0,} \\{\underline{v}(0) =\inf\limits_{x\in[-h_{0},h_{0}]}V_{0}(x),} & {t>0,} \end{array}\right.
		\end{equation}
		then $(\underline{u},\underline{v})$ is the lower solution of (\ref{1.3}).
		Moreover,
		\begin{equation}\label{2.13}
		0<\underline{u}\leq U(x,t),0<\underline{v}\leq V(x,t), for\enspace (x,t)\in(g(t),h(t))\times(0,T].
		\end{equation}
		Therefore, take
		$$C_{2}=\max\{\sup\limits_{x\in[-h_{0},h_{0}]}U_{0}(x),N_{1}\}=N_1,C_{3}=\max\{\sup\limits_{x\in[-h_{0},h_{0}]}V_{0}(x),N_{2}\}=N_2.$$
		Then $C_{2},C_{3}$ satisfy (\ref{2.7}).
	\end{proof}
	The following lemma gives an estimate about the upper bound and lower bound of the asymptotic spreading speeds for the leftward front and the rightward front, which is similar to Lemma 2.2 in \cite{du2010spreading} or Lemma 3.3 in \cite{tian2017free}.
	\begin{lemma}\label{l26}\quad
		Assume that $T\in(0,+\infty)$. Let $(U,V;g,h)$ be a solution of (\ref{1.3}) for $t\in(0,T]$, then there exists a positive constant $C$ independent of $T$ such that
		$$0<-g^{\prime}(t), h^{\prime}(t)\leq C, for\enspace 0<t\leq T.$$
	\end{lemma}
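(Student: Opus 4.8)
The plan is to control the two front velocities separately, working out the right front $h'(t)$ in full and obtaining the left front $-g'(t)$ by a mirror-image argument. The two ingredients are the strong maximum principle (to pin down the sign of the velocities) together with a local barrier and the Comparison Principle of Lemma \ref{l22} (to bound the size of $U_x$ at the boundary). First I would establish positivity. Rewriting the $U$-equation as a linear parabolic equation $U_t-D_1U_{xx}+\mu U_x+(a_1V+\gamma)U=a_1N_1V\ge 0$ with nonnegative zeroth-order coefficient, Lemma \ref{l25} gives $U>0$ in the interior and $U(h(t),t)=U(g(t),t)=0$, so Hopf's boundary lemma yields $U_x(h(t),t)<0$ and $U_x(g(t),t)>0$. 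Through the Stefan conditions this gives $h'(t)=-\nu U_x(h(t),t)>0$ and $-g'(t)=\nu U_x(g(t),t)>0$, which is the strict left-hand inequality in the statement.

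For the upper bound I would build a barrier near the right front. Fix $M>0$ to be chosen, set $\Omega_M:=\{(x,t):h(t)-M^{-1}<x<h(t),\ 0<t\le T\}$, and define
\[
w(x,t)=N_1\big[2M(h(t)-x)-M^2(h(t)-x)^2\big].
\]
On $\Omega_M$ one has $0\le M(h(t)-x)\le 1$, so $0\le w\le N_1$, $w_{xx}=-2M^2N_1$, $|w_x|\le 2MN_1$, and, crucially, $w_t=2MN_1\,h'(t)\big[1-M(h(t)-x)\big]\ge 0$, where only the sign $h'(t)>0$ enters, not its size. Using $0\le U\le N_1$, $0\le V\le N_2$ from Lemma \ref{l25} and $\gamma w\ge 0$, the supersolution defect satisfies
\[
w_t-D_1w_{xx}+\mu w_x-a_1(N_1-w)V+\gamma w\ \ge\ 2D_1M^2N_1-2|\mu|MN_1-a_1N_1N_2,
\]
so $w$ is a supersolution of the $U$-equation on $\Omega_M$ once $M$ is large enough that $2D_1M^2\ge 2|\mu|M+a_1N_2$. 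Enlarging $M$ further so that $M\ge\|U_0'\|_{\infty}/N_1$ guarantees $w(x,0)\ge U_0(x)$; meanwhile $w=N_1\ge U$ on the inner lateral boundary $x=h(t)-M^{-1}$ and $w=0=U$ on $x=h(t)$. Lemma \ref{l22} then gives $U\le w$ throughout $\Omega_M$.

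Finally, since $U$ and $w$ both vanish on $x=h(t)$ with $U\le w$ immediately to the left, comparing one-sided difference quotients (dividing by the negative quantity $x-h(t)$) yields $U_x(h(t),t)\ge w_x(h(t),t)=-2MN_1$, whence $h'(t)=-\nu U_x(h(t),t)\le 2\nu MN_1$. The reflected barrier $N_1\big[2M(x-g(t))-M^2(x-g(t))^2\big]$ on $\{g(t)<x<g(t)+M^{-1}\}$ gives $U_x(g(t),t)\le 2MN_1$ in the same way, hence $-g'(t)=\nu U_x(g(t),t)\le 2\nu MN_1$. Taking $C:=2\nu MN_1$, which depends only on $D_1,\mu,a_1,N_1,N_2$ and $\|U_0'\|_{\infty}$ and is manifestly independent of $T$, completes the proof. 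The one delicate point is the supersolution verification: the advection term $\mu w_x$ has indefinite sign, but $|w_x|\le 2MN_1$ grows only linearly in $M$ while the diffusion contribution $-D_1w_{xx}=2D_1M^2N_1$ grows quadratically, so diffusion dominates for large $M$ and absorbs both the advection and the bounded reaction term—this is exactly where the choice of $M$, and hence the new $\mu$-dependence relative to the advection-free case, is forced.
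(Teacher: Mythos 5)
Your proof is correct and is essentially the argument the paper defers to (Lemma 2.2 of Du--Lin, adapted for advection): Hopf's boundary lemma for the strict signs, then the quadratic barrier $w$ in a strip of width $M^{-1}$ behind each front, with the key observation that $w_t\ge 0$ uses only $h'(t)>0$, and with the advection term $2|\mu|MN_1$ absorbed by the $2D_1M^2N_1$ diffusion term for large $M$ --- exactly the modification this model requires. The only cosmetic point: the comparison of $U$ and $w$ on $\Omega_M$ is the classical parabolic comparison principle on a prescribed region (the $U$-equation is linear in $U$ once $V$ is treated as a known nonnegative coefficient), rather than Lemma \ref{l22}, which is stated for the full free-boundary system; with that citation adjusted, the proof stands as written.
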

	
	The following theorem shows that the solution of (\ref{1.3}) can be extend to $[0,\infty)$.
	\begin{theorem}\label{t26}\quad
		For any given $(U_{0},V_{0})$ satisfying the initial conditions, the solution of free boundary problem (\ref{1.3}) exists and is unique for any $t\in[0,\infty)$.
	\end{theorem}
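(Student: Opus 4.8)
The plan is to prove global existence by a standard continuation argument, upgrading the local solution from Theorem \ref{t21} by means of the uniform a priori bounds already in hand. First I would invoke Theorem \ref{t21} to obtain, for the given data, a unique solution $(U,V;g,h)$ on a maximal interval $[0,T^*)$, where $T^*$ is the supremum of all $T>0$ for which a unique solution exists on $[0,T]$. The goal is to show $T^*=+\infty$, so I argue by contradiction and suppose $T^*<+\infty$.

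The key point is that every relevant quantity stays controlled up to $T^*$ with bounds independent of $T$. By Lemma \ref{l25} we have $0<U\le N_1$ and $0<V\le N_2$ throughout the existence interval, so the components cannot blow up. By Lemma \ref{l26} the free boundary speeds satisfy $0<-g'(t),\,h'(t)\le C$; integrating, the boundaries remain in the bounded region $[-h_0-CT^*,\,h_0+CT^*]$. Moreover the Hopf lemma gives $U_x(h(t),t)<0$ and $U_x(g(t),t)>0$, whence $h$ is increasing and $g$ is decreasing, so the width satisfies $h(t)-g(t)\ge 2h_0>0$ and the domain never degenerates. Thus on $[0,T^*)$ the problem is posed on a uniformly nondegenerate bounded domain with uniformly bounded data.

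Next I would run the parabolic regularity machinery on the straightened domain. Flattening the free boundary by the usual change of variables, the equations for $U$ and $V$ become uniformly parabolic with coefficients controlled by the bounds above; the $L^p$ estimates together with the Schauder estimates then yield bounds on $\|U\|_{C^{2+\alpha,1+\alpha/2}}$, $\|V\|_{C^{2+\alpha,1+\alpha/2}}$, $\|g\|_{C^{1+\alpha/2}}$ and $\|h\|_{C^{1+\alpha/2}}$ on $[\tau,T^*)$ (for any fixed $\tau\in(0,T^*)$) that are independent of how close $t$ is to $T^*$. In particular the limits $U(\cdot,T^*)$, $V(\cdot,T^*)$, $g(T^*)$, $h(T^*)$ exist and inherit the regularity and the sign/size constraints of (\ref{1.4}), so they furnish admissible initial data of the same type on a nondegenerate interval.

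Finally I would restart the problem from a time $\tau$ slightly below $T^*$. Because the local existence time in Theorem \ref{t21} depends only on the $C^2$ norms of the initial data, the half-width, and $\alpha$, all of which are now uniformly controlled, there is a uniform $\delta>0$ for which the solution extends to $[0,T^*+\delta)$, contradicting the maximality of $T^*$. Hence $T^*=+\infty$ and the solution is global; uniqueness on each $[0,T]$ is already supplied by Theorem \ref{t21}, and it propagates to $[0,\infty)$ by a connectedness argument. I expect the third step to be the main obstacle: one must verify that the Schauder estimates hold uniformly as $t\to T^*$, i.e.\ that no regularity is lost in the limit, which hinges precisely on the nondegeneracy of the domain (width bounded below by $2h_0$) and the boundedness of the boundary speeds from Lemma \ref{l26}, together with the control of the flattened coefficients.
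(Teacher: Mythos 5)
Your proposal is correct and takes essentially the same route as the paper: a contradiction argument against a finite maximal existence time, using the uniform bounds on $(U,V)$ and the boundary speeds together with $L^p$, Sobolev embedding and Schauder/H\"older estimates to get uniform $C^2$ control near the maximal time, and then restarting the local existence result of Theorem \ref{t21} slightly before that time to extend the solution beyond it. Your write-up is merely more explicit than the paper's (naming Lemmas \ref{l25} and \ref{l26}, and noting the nondegeneracy of the domain width), but the underlying argument is identical.
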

	\begin{proof}\quad
		For fixed $T_0>0$, the uniqueness and local existence of the solution to the problem (\ref{1.3}) can be obtained following from Theorem \ref{t21}. Now we show the existence of the global solution. If $[0,T_{0}]$ is the maximal existence interval of the solution, we will show $T_{0}=+\infty$. Assuming that $T_{0}<+\infty$, for the fixed $\delta\in(0,T_{0})$, by the $L^{p}$ estimates, the Sobolev's embedding theorem and the L\"{o}lder estimates about parabolic equations, there exists $C_{4}=C_{4}(\delta,T,C_1,C_{2},C_{3})>0$ such that
		$$\|U(x,t)\|_{C^{2}(g(t),h(t))},\|V(x,t)\|_{C^{2}(g(t),h(t))} \leq C_{4}, for \enspace t\in[\delta,T_{0}].$$
		Then there exists a small $\varepsilon>0$ such that the solution of (\ref{1.3}) with initial time $T_{0}-\varepsilon/2$ can be extended to the time $T_{0}+\varepsilon/2$ uniquely by Theorem \ref{t21} and Zorn's lemma. This is contradict to the choice of $T_{0}$. Hence, we have completed the proof.
	\end{proof}
	
	\section{Basic reproduction number}\label{s3}
	\noindent
	In this section, we will define and study the basic reproduction number for the system(\ref{1.3}). According to L{\'{o}}pez-G{\'{o}}mez~\cite{caudevilla2008asymptotic},
	there exist a principal eigenvalue and corresponding unique eigenfunction $(\phi,\psi)$ (subject to a multiplicative positive constant) satisfying the following eigenvalue problem
	\begin{equation}\label{3.1}
	\left\{\begin{array}{ll}{-D_{1} \phi_{x x}=-\mu\phi_{x}+\frac{a_{1} N_{1} }{R_{0}^{D}}\psi-\gamma \phi,} & {x\in (-h_{0},h_{0}),} \\ {-D_{2} \psi_{x x}=\frac{a_{2} N_{2} }{R_{0}^{D}}\phi-d \psi,} & {x\in (-h_{0},h_{0}), } \\ {\phi(x)=\psi(x)=0,} & {x=\pm h_{0}.} \end{array}\right.
	\end{equation}
	By applying a similar way as discussed by Diekmann et al.~\cite{diekmann1990definition}, Allen et al. \cite{Allen2008Asymptotic} or Zhao \cite {zhao2003dynamical} to calculate the principal eigenvalue of (\ref{3.1}), we define the unique positive principal eigenvalue $R^D_0$ with Dirichlet boundary condition and the advection rate for problem (\ref{3.1}) as the general basic reproduction number:
	\begin{equation}\label{3.2}
	\begin{aligned}
	&R_{0}^{D}:=R_{0}^{D}((-h_0,h_0),\mu,D_1,D_2)\\&=\sqrt{\dfrac{a_{1}a_{2}N_{1}N_{2}}{[{D_{1}(\dfrac{\pi}{2h_{0}})^2}+\dfrac{\mu^{2}}{4D_{1}}+\gamma][D_{2}(\dfrac{\pi}{2h_{0}})^{2}+d]}}.
	\end{aligned}
	\end{equation}
	
	Applying the similar variational methods from Cantrell and Cosner \cite{cantrell2004spatial} or Lemma 2.3 in \cite {Huang2010Dynamics}, the following result of $R^{D}_{0}$ holds.
	\begin{lemma}\label{l31}\quad
		$1-R^{D}_{0}$ and $\lambda_{0}$ have the same sign, where $\lambda_{0}$ is the principal eigenvalue of the following problem
		\begin{equation}\label{3.3}
		\left\{\begin{array}{ll}{-D_{1} \phi_{x x}=-\mu\phi_{x}+a_{1} N_{1} \psi-\gamma \phi+\lambda_0\phi,} & {x\in (-h_0,h_0),} \\ {-D_{2} \psi_{x x}=a_{2} N_{2} \phi-d \psi+\lambda_0\psi,} & {x\in (-h_0,h_0), } \\ {\phi(x)=\psi(x)=0,} & {x=\pm h_0 .} \end{array}\right.
		\end{equation}
		where $(\phi(x),\psi(x))>0$ for $x\in (-h_0,h_0)$, $\phi^\prime(-h_0)>0,\psi^\prime(-h_0)>0$ and $\phi^\prime(h_0)<0,\psi^\prime(h_0)<0.$	
	\end{lemma}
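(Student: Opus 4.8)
The plan is to recast both eigenvalue problems in operator form and then deduce the sign relation from the Krein--Rutman theory of positive operators rather than from any explicit computation. On the space of pairs with zero Dirichlet data on $\{\pm h_0\}$, introduce the transition operator
\[
\mathcal{V}(\phi,\psi)=\big(-D_1\phi_{xx}+\mu\phi_x+\gamma\phi,\ -D_2\psi_{xx}+d\psi\big)
\]
and the gain operator $\mathcal{F}(\phi,\psi)=(a_1N_1\psi,\ a_2N_2\phi)$. Because $\gamma>0$, $d>0$ and the principal Dirichlet eigenvalue of the advection--diffusion part equals $D_1(\pi/2h_0)^2+\mu^2/(4D_1)>0$ (exactly the quantity appearing in \eqref{3.2}), the operator $\mathcal{V}$ is invertible with a strongly positive, compact inverse. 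Rewriting \eqref{3.1} gives $\mathcal{V}\Phi=\tfrac{1}{R^D_0}\mathcal{F}\Phi$, i.e. $\mathcal{V}^{-1}\mathcal{F}\Phi=R^D_0\Phi$, so $R^D_0$ is the principal eigenvalue (equivalently the spectral radius) of the strongly positive compact operator $\mathcal{V}^{-1}\mathcal{F}$; meanwhile \eqref{3.3} reads $(\mathcal{V}-\mathcal{F})\Phi=\lambda_0\Phi$, so $\lambda_0$ is the principal eigenvalue of $\mathcal{V}-\mathcal{F}$ with the positive eigenfunction $(\phi,\psi)$ supplied by \cite{caudevilla2008asymptotic}.

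First I would exploit this common positive eigenfunction. Applying $\mathcal{V}^{-1}$ to $(\mathcal{V}-\mathcal{F})\Phi=\lambda_0\Phi$ yields
\[
\mathcal{V}^{-1}\mathcal{F}\Phi=\Phi-\lambda_0\,\mathcal{V}^{-1}\Phi .
\]
Since $\Phi>0$ and $\mathcal{V}^{-1}$ is strongly positive, $\mathcal{V}^{-1}\Phi>0$; hence $\lambda_0>0$ forces $\mathcal{V}^{-1}\mathcal{F}\Phi<\Phi$, $\lambda_0=0$ gives $\mathcal{V}^{-1}\mathcal{F}\Phi=\Phi$, and $\lambda_0<0$ gives $\mathcal{V}^{-1}\mathcal{F}\Phi>\Phi$. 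The Collatz--Wielandt comparison for the spectral radius of a positive operator (a positive $\Phi$ with $\mathcal{V}^{-1}\mathcal{F}\Phi\le c\Phi$ implies $R^D_0\le c$, and $\ge c\Phi$ implies $R^D_0\ge c$) then turns these three cases into $R^D_0<1$, $R^D_0=1$, $R^D_0>1$ respectively, which is precisely $\mathrm{sign}(1-R^D_0)=\mathrm{sign}(\lambda_0)$. An equivalent and perhaps more transparent route is to consider the one-parameter family $(\mathcal{V}-\tau\mathcal{F})\Phi=\lambda(\tau)\Phi$: monotonicity of the principal eigenvalue of the cooperative operator $\tau\mathcal{F}-\mathcal{V}$ makes $\tau\mapsto\lambda(\tau)$ strictly decreasing, while \eqref{3.1} shows $\lambda(\tau)=0$ holds exactly at $\tau=1/R^D_0$; comparing $\tau=1$ with $\tau=1/R^D_0$ again gives the claim.

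The step I expect to be the main obstacle is the rigorous verification of the operator-theoretic hypotheses that make Krein--Rutman and the Collatz--Wielandt comparison applicable: namely that $\mathcal{V}^{-1}\mathcal{F}$ is compact and \emph{irreducible} (strongly positive), so that its spectral radius is a simple eigenvalue admitting a strictly positive eigenfunction and is the only eigenvalue that does so. Irreducibility is where the structure of the model enters, since it needs the cross-coupling coefficients $a_1N_1$ and $a_2N_2$ to be positive so that the components $\phi$ and $\psi$ genuinely interact; strict positivity of $\Phi$ in $(-h_0,h_0)$ together with the boundary-derivative signs $\phi'(\pm h_0),\psi'(\pm h_0)\neq0$ asserted in the statement follows from the strong maximum principle and the Hopf boundary lemma, and these strict inequalities are exactly what upgrade the weak spectral comparisons above to the strict conclusions $R^D_0<1$ and $R^D_0>1$. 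Once these positivity facts are secured the sign identity is immediate, and the advection term $\mu\phi_x$ causes no extra difficulty, as it only shifts the principal eigenvalue of $\mathcal{V}$ by the positive amount $\mu^2/(4D_1)$ and preserves the invertibility and positivity of $\mathcal{V}^{-1}$.
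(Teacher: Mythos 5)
Your argument is correct, but it is not the paper's route: the paper gives no proof of this lemma at all, merely asserting that ``similar variational methods'' from Cantrell and Cosner \cite{cantrell2004spatial} or Lemma 2.3 of \cite{Huang2010Dynamics} apply, i.e.\ a Rayleigh-quotient comparison between $R_0^D$ and $\lambda_0$. What you propose instead is the next-generation-operator/Krein--Rutman argument: identify $R_0^D$ as the spectral radius of $\mathcal{V}^{-1}\mathcal{F}$ via \eqref{3.1}, insert the positive eigenfunction $\Phi$ of \eqref{3.3} into the identity $\mathcal{V}^{-1}\mathcal{F}\Phi=\Phi-\lambda_0\mathcal{V}^{-1}\Phi$, and conclude by Collatz--Wielandt comparison. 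This is arguably better suited to the present lemma than what the paper cites: the advection term makes the operator non-self-adjoint and the coupling coefficients $a_1N_1\neq a_2N_2$ are asymmetric, so the variational characterizations (developed for scalar, self-adjoint SIS-type problems) do not transfer verbatim, whereas positive-operator theory needs no symmetry; your proof also has the virtue of using only the eigenvalue characterization \eqref{3.1} of $R_0^D$ and never the explicit formula \eqref{3.2}. Two details to tighten when writing it out. First, $\mathcal{V}^{-1}\mathcal{F}$ is not literally strongly positive, since $\mathcal{F}$ swaps components: it sends a nonnegative pair $(\phi,0)$ to a pair whose first component vanishes identically. It is, however, irreducible, and $(\mathcal{V}^{-1}\mathcal{F})^2$ is strongly positive, so you should invoke Krein--Rutman in its irreducible (de Pagter) form; your own remark about irreducibility shows you see that this is the crux. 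Second, the strict conclusions $R_0^D<1$ and $R_0^D>1$ do not follow from the weak Collatz--Wielandt bounds as quoted; the clean fix is to test the strict inequality $\mathcal{V}^{-1}\mathcal{F}\Phi\lneq\Phi$ (valid when $\lambda_0>0$ because $\mathcal{V}^{-1}\Phi>0$) against the strictly positive eigenfunctional $\ell$ of the adjoint operator, $(\mathcal{V}^{-1}\mathcal{F})^{*}\ell=r\ell$, which gives $r\langle \ell,\Phi\rangle<\langle \ell,\Phi\rangle$ and hence $r<1$ directly, and symmetrically for the other sign. Both repairs are routine, and your alternative route via monotonicity of $\tau\mapsto\lambda(\tau)$ with $\lambda(1/R_0^D)=0$ is equally sound for cooperative irreducible systems.
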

	
	Since the boundary $(g(t),h(t))$ changes with time, we introduce the spatial-temporal risk index with the advection rate and time as the basic reproduction number in epidemiology
	\begin{equation}\label{3.4}
	\begin{aligned}
	&R^{F}_{0}(t):=R^D_{0}((g(t),h(t)),\mu,D_{1},D_{2})
	\\&=\sqrt{\dfrac{a_{1}a_{2}N_{1}N_{2}}{[{D_{1}(\dfrac{\pi}{h(t)-g(t)})^2}+\dfrac{\mu^{2}}{4D_{1}}+\gamma][D_{2}(\dfrac{\pi}{h(t)-g(t)})^{2}+d]}}.
	\end{aligned}
	\end{equation}
	
	Following from the definition of $R^{F}_{0}(t)$, we can easily get
	\begin{property}\label{p32}\quad The following properties of $R^{F}_{0}(t)$ hold.
		\\(1) $R^{F}_{0}(t)$ is a positive and monotonically decreasing function of $\mu: R^{F}_{0}(t)\rightarrow 0$ as $\mu \rightarrow+\infty;$
		\\(2) If $\mu\neq 0$, then $R^{F}_{0}(t)\rightarrow $  0 as $D_{1}\rightarrow 0$ and $D_{2}\rightarrow 0$ or $D_1 \rightarrow \infty;$
		\\(3) $R^{F}_{0}(t)$ is strictly monotonically increasing function of $t:$ when $\mu\neq 0,$  if $h(t)-g(t)\rightarrow +\infty$ as $t\rightarrow +\infty$, then $R^{F}_{0}(t)\rightarrow R_0(\mu) $ as $t\rightarrow +\infty$,
		where $R_{0}(\mu)=\sqrt{\dfrac{a_{1}a_{2}N_{1}N_{2}}{(\dfrac{\mu^{2}}{4D_{1}}+\gamma)d}}.$
	\end{property}
	
Since the left boundary $x=g(t) $ is monotonically decreasing and the right boundary $x=h(t)$ is monotonically increasing, there exist $g_\infty\in[-\infty,0)$ and $h_\infty\in(0,\infty]$ such that
	$g_\infty=\lim\limits_{t\rightarrow +\infty}g(t)$ and $h_\infty=\lim\limits_{t\rightarrow +\infty}h(t).$
	Moreover, when $\mu=0$, we suppose that the habitat at far distance is in high risk, that is,
	$$R^D_0((-\infty,0),0,D_1,D_2)>1 \enspace and \enspace  R^D_0((0,\infty),0,D_1,D_2)>1,$$ equivalently,
	${a_{1}a_{2}N_{1}N_{2}}> d\gamma.$
	
	In view of the above properties of $R^F_0(t)$, there exists a threshold value $$\mu^{*}:=2\sqrt{D_{1}(\dfrac{a_{1}a_{2}N_{1}N_{2}}{d}-\gamma)}.$$ If $|\mu|<\mu^{*}$,  then there is a $t_0\geq0$ such that $$R^{F}_{0}(t_0)=R^D_{0}((g(t_0),h(t_0)),
	\mu,D_1,D_2)\geq1$$ and $$
	R^F_0(\infty):=R^D_{0}((g_\infty,h_\infty),
	\mu,D_1,D_2)>1$$ under the assumption of $h_\infty-g_\infty=\infty$;
	if $|\mu|>\mu^{*}$, then $$R^{F}_{0}(t)=R^D_{0}((g(t),h(t)),\mu,D_{1},D_{2})< 1$$ for any $t\geq 0$.

    Considering the above arguments about high-risk habitat at far distance and small advection, we make the assumption of $(H)$.

	\section{The vanishing regime of WNv}\label{s4}
	\noindent
	Now we will introduce the definitions of vanishing and spreading from \cite{ge2015sis}.
	\begin{definition}\quad
		The disease is \textbf{ vanishing} if $h_{\infty}-g_{\infty}<\infty$ and $$\lim\limits_{t\rightarrow +\infty}(||U(\cdot,t)||_{C(g(t),h(t))}+||V(\cdot,t)||_{C(g(t),h(t))})=0;$$
		The disease is \textbf{ spreading} if $h_{\infty}-g_{\infty}=\infty$ and $$\lim\limits_{t\rightarrow +\infty}\sup\limits (||U(\cdot,t)||_{C(g(t),h(t))}+ ||V(\cdot,t)||_{C(g(t),h(t))})>0.$$
	\end{definition}
	The following theorem gives the relationship about the  spreading boundaries with the densities of birds and mosquitoes. We show that if the infected region of WNv is bounded, then the densities of the birds and the mosquitoes will decay to 0 and the asymptotic spreading speeds of the double boundaries will decay to 0, that is, the diseases will be extinct, which is coincident with the biological reality.
	\begin{theorem}\label{t42}\quad
		Let $(U,V;g,h)$ be a solution of (\ref{1.3}), if $ h_{\infty}-g_{\infty}<\infty$, then
		\begin{equation}\label{4.1}
		\lim\limits_{t\rightarrow +\infty}||U(\cdot,t)||_{C(g(t),h(t))}=\lim\limits_{t\rightarrow +\infty}||V(\cdot,t)||_{C(g(t),h(t))}=0
		\end{equation}
		and
		\begin{equation}\label{4.2}
		\lim\limits_{t\rightarrow +\infty}g^{\prime}(t)=\lim\limits_{t\rightarrow +\infty}h^{\prime}(t)=0.	
		\end{equation}
	\end{theorem}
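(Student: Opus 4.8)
The plan is to first fix the geometry of the habitat, which under the hypothesis collapses to a fixed bounded window; then to upgrade the a~priori bounds of Lemma~\ref{l25}--\ref{l26} to uniform-in-time parabolic regularity; then to deduce the decay of the boundary speeds~(\ref{4.2}); and finally to prove the extinction~(\ref{4.1}) by a time-translation compactness argument in which the Stefan condition over-determines the limit and Hopf's lemma forces it to vanish.

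First I would record the monotonicity of the fronts: by the strong maximum principle and the positivity in Lemma~\ref{l25}, Hopf's lemma gives $U_x(h(t),t)<0$ and $U_x(g(t),t)>0$, so through the Stefan conditions $h'(t)>0>g'(t)$; hence $h$ increases and $g$ decreases. Since $h_\infty-g_\infty<\infty$, both limits are finite and $(g(t),h(t))\subset I_\infty:=(g_\infty,h_\infty)$ for every $t$, a fixed interval whose width stays between $2h_0$ and $h_\infty-g_\infty$. On this uniformly nondegenerate family of domains I would straighten the free boundary as in Theorem~\ref{t21} and run interior/boundary $L^p$ and Schauder estimates on the unit windows $[t,t+1]$; feeding in $0<U\le N_1$, $0<V\le N_2$ from Lemma~\ref{l25} and the speed bound from Lemma~\ref{l26}, this produces a constant $M$ independent of $t$ with $\|U(\cdot,t)\|_{C^1}+\|V(\cdot,t)\|_{C^1}\le M$ and $\|h\|_{C^{1+\alpha/2}([1,\infty))}+\|g\|_{C^{1+\alpha/2}([1,\infty))}\le M$.

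For~(\ref{4.2}), since $h$ is increasing and bounded we have $\int_{1}^{\infty}h'(t)\,dt=h_\infty-h(1)<\infty$, while the uniform H\"older bound makes $h'\ge0$ uniformly continuous on $[1,\infty)$. A nonnegative, uniformly continuous, integrable function on a half-line must tend to $0$; hence $h'(t)\to0$, and symmetrically $g'(t)\to0$. Via the Stefan conditions this says precisely $\nu U_x(h(t),t)\to0$ and $\nu U_x(g(t),t)\to0$, which is the information I need for the next step.

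For the extinction~(\ref{4.1}) I would argue by contradiction. Suppose $\limsup_{t\to\infty}(\|U(\cdot,t)\|_\infty+\|V(\cdot,t)\|_\infty)\ge\delta>0$ along some $t_n\to\infty$. Because $|U_x|,|V_x|\le M$ and $U=V=0$ on the boundary, a near-maximal point lies at distance $\ge\delta/(2M)$ from $\partial(g(t_n),h(t_n))$, hence in a fixed compact subset of $I_\infty$. Translating $(U,V)$ by $t_n$ in time and using the uniform estimates, a subsequence converges in $C^{2,1}_{loc}$ to a nonnegative entire solution $(\tilde U,\tilde V)$ of the same system on the stabilized cylinder $I_\infty\times\Real$, with $\tilde U=\tilde V=0$ on $\partial I_\infty$ and with $(\tilde U,\tilde V)(x_*,0)\neq(0,0)$ at some interior $x_*$. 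The decisive point is that the boundary-speed decay from~(\ref{4.2}) passes to the limit as the extra relation $\tilde U_x(h_\infty,s)=\tilde U_x(g_\infty,s)=0$; together with $\tilde U=0$ there, this over-determines $\tilde U$, so Hopf's lemma forces $\tilde U\equiv0$ (a nontrivial nonnegative solution would have strictly signed inward-normal derivative). With $\tilde U\equiv0$ the $V$-equation reduces to $\tilde V_t=D_2\tilde V_{xx}-d\tilde V$ on $I_\infty$ with Dirichlet data, whose zero state is uniformly exponentially stable, so the only bounded entire nonnegative solution is $\tilde V\equiv0$; thus $(\tilde U,\tilde V)\equiv(0,0)$, a contradiction. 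I expect this passage to the limit to be the main obstacle: one must secure enough uniform regularity \emph{up to the moving boundary} that both the Dirichlet value and the normal derivative of $\tilde U$ survive in the limit, after which Hopf's lemma does the decisive work, the remaining steps being routine applications of the comparison principle (Lemma~\ref{l22}) and standard parabolic estimates.
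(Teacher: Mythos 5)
Your proposal is correct and follows essentially the same route as the paper's proof: uniform parabolic estimates obtained by straightening the free boundary, a Barbalat-type argument (bounded monotone front plus uniformly H\"older boundary speed) giving (\ref{4.2}), and then a time-translation compactness argument combined with the strong maximum principle and Hopf's lemma for (\ref{4.1}). The only differences are in packaging: the paper runs the Hopf contradiction in the opposite direction, pulling the strict inequality $\tilde U_x(h_\infty,0)<0$ back to finite times to force $h'(t_{k_n})\ge \nu\delta_0>0$ in contradiction with (\ref{4.2}) (rather than passing the Stefan condition to the limit to over-determine $\tilde U$), and it disposes of $V$ by a direct comparison $V_t-D_2V_{xx}\le a_2N_2\epsilon-\gamma V$ once $U\to 0$ is known, instead of your Liouville-type argument for the entire limit solution $\tilde V$.
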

	\begin{proof}\quad We will prove this theorem by two steps. \\
		\textbf{Step 1.} we will show
		\begin{equation}\label{4.3}
		||U||_{{C^{1+\alpha,(1+\alpha)/2}}((g(t),h(t))\times[1,\infty))}+||V||_{{C^{1+\alpha,(1+\alpha)/2}}((g(t),h(t))\times[1,\infty))}\leq \tilde{C},
		\end{equation}
		and
		\begin{equation}\label{4.4}
		||g||_{C^{1+\alpha,(1+\alpha)/2}([1,\infty))}+||h||_{C^{1+\alpha,(1+\alpha)/2}([1,\infty))}\leq \tilde{C}
		\end{equation}
		for any $\alpha\in(0,1)$, where $\tilde{C}=\tilde{C}(\alpha,h_{0},||U_{0}||_{C^{2}([-h_{0},h_{0}])},||V_{0}||_{C^{2}([-h_{0},h_{0}])},g_{\infty},h_{\infty})>0$.
		
		Now we straighthen the free boundaries by making a transformation motivated by Wang\cite{wang2014some,wang2016diffusive}
		\begin{equation}\label{4.5}
		y=\frac{2x}{h(t)-g(t)}-\frac{h(t)+g(t)}{h(t)-g(t)}.
		\end{equation}
		then the boundary $x=g(t)$ changes into $y=-1$ and $x=h(t)$ changes into $y=1$.
		A straightforward calculation gives
		\begin{equation}\label{4.6}
		\begin{aligned}
		&\frac{\partial y}{\partial x}=\frac{2}{h(t)-g(t)}:=\sqrt{A(g(t),h(t),y)}, \frac{\partial^2 y}{\partial x^2}=0,
		\\&\frac{\partial y}{\partial t}=-\frac{y({h^\prime(t)}-{g^\prime(t)})+({h^\prime(t)}+{g^\prime(t)})}{h(t)-g(t)}
		\\&\enspace\enspace:=B(g(t),{g^\prime(t),h(t),{h^\prime(t)},y)}.
		\end{aligned}
		\end{equation}
		Let $W(y,t)=U(x,t), Z(y,t)=V(x,t)$, then $W(y,t)$ satisfies
		\begin{equation}\label{4.7}
		\left\{\begin{array}{ll}{W_{t}-D_{1}AW_{yy}+(\mu \sqrt{A}+B)W_{y}=f(W,Z),} & {y\in (-1,1),t>0,} \\ {W(\pm 1,t)=0 ,} & {t>0, } \\ {W(y,0)=U_{0}( h_0 y),} & {y\in(-1,1).} \end{array}\right.
		\end{equation}
		For any integer $n\geq 0,$ define
		$$W^{n}(y,t)=W(y,t+n), Z^{n}(y,t)=Z(y,t+n),$$
		then (\ref{4.7}) becomes
		\begin{equation}\label{4.8}
		\left\{\begin{array}{ll}{W^{n}_{t}-D_{1}A^{n} W^{n}_{yy}+(\mu \sqrt{A^{n}}+B^{n})W^{n}_{y}=f(W^n,Z^n),} & {y\in [-1,1],t\in(0,3],} \\ {W^n(\pm 1,t)=0 ,} & {t\in (0,3], } \\ {W^n(y,0)=U(\frac{y(h(n)-g(n))+h(n)+g(n)}{2},n),} & {y\in[-1,1],} \end{array}\right.
		\end{equation}
		where $A^n=A(t+n), B^n=B(t+n)$. We can see that $W^n,Z^n,A^n $ and $B^n$ are uniformly bounded on $n$ according  to Theorem \ref{t21} and Lemma \ref{l25}.
		Moreover,
		\begin{equation}\label{4.9}
		\max _{0 \leq t_{1}<t_{2} \leq 3,\left|t_{1}-t_{2}\right| \leq \tau}\left|A^{n}\left(t_{1}\right)-A^{n}\left(t_{2}\right)\right| \leq \frac{8\left(h^{n}(t)-g^{n}(t)\right)^{\prime}}{\left(h^{n}(t)-g^{n}(t)\right)^{3}} \leq \frac{2 C_{1} \tau}{h_{0}^{3}} \rightarrow 0 , as \enspace \tau \rightarrow 0.
		\end{equation}
		where $h^{n}(t)=h(t+n)$ and $g^{n}(t)=g(t+n)$. When $ h_{\infty}-g_{\infty}<\infty,$ we obtain $A^{n} \geq \frac{4}{\left(h_{\infty}-g_{\infty}\right)^{2}}$ for any $n\geq 0$ and $t\in (0,3]$.
		
		Taking $p\gg1$, by applying the interior $L^{P}$ eatimate, there exists a postive constant $\overline{C}$ independent of $n$ such that $||U^n||_{W^{2,1}_{p}([-1,1]\times [1,3])}\leq \overline{C}$ for any $n>0$.
		Therefore,  by Sobolev's embedding theorem, $$||U^n||_{C^{1+\alpha,(1+\alpha)/2}([-1,1]\times [1,3])} \leq \overline{C}.$$ Moreover, $||U||_{C^{1+\alpha,(1+\alpha)/2}([-1,1]\times [n+1,n+3])}\leq \overline{C}$. Similarly, we can get $$||V||_{C^{1+\alpha,(1+\alpha)/2}([-1,1]\times [n+1,n+3])}\leq \overline{C_1}$$ for some $\overline{C_1}>0.$
		
		Since
		$$g^{\prime}(t)=-\nu U_{x}(g(t),t),\enspace U_{x}(g(t),t)=\frac{2}{h(t)-g(t)}W_{y}(-1,t),$$
		$$h^{\prime}(t)=-\nu U_{x}(h(t),t),\enspace U_{x}(h(t),t)=\frac{2}{h(t)-g(t)}W_{y}(1,t).$$
		in $[-1,1]\times [n+1,n+3],$ and $g^{\prime}(t)$ and $h^{\prime}(t)$ are bounded, then
		\begin{equation}\label{4.10}
		||g||_{C^{1+\alpha/2}([n+1,n+3])}+||h||_{C^{1+\alpha/2}([n+1,n+3])}\leq \hat{C},
		\end{equation}
		for some $\hat{C}>0$. Since the rectangles $[-1,1]\times [n+1,n+3]$ overlap and $\overline{C}$, $\overline{C_1}$ and $\hat{C}$ are independent on $n$, take $\tilde{C}=\overline{C}+\overline{C_1}+\hat{C}$, so (\ref{4.3}) and (\ref{4.4}) hold. Moreover,  since $h_\infty-g_\infty<\infty$, we can get $g^{\prime}(t)\rightarrow 0$ and $h^{\prime}(t)\rightarrow 0$ as $t\rightarrow \infty$. Thus (\ref{4.2}) has been proved.
		
		\textbf{Step 2.} we will show
		$	\lim\limits_{t\rightarrow +\infty}||U(\cdot,t)||_{C(g(t),h(t))}=0.$
		
		On the contrary, we assume that
		$$	\lim\limits_{t\rightarrow +\infty}\sup\limits||U(\cdot,t)||_{C(g(t),h(t))}=\theta>0.$$ Then there is a sequence$\{(x_{k},t_{k})\} $ in $(g(t),h(t))\times (0,\infty)$ such that $U(x_{k},t_{k})\geq \frac{\theta}{2}$ for any $ k \in N$ and $t_{k}\rightarrow \infty $ as $k\rightarrow \infty.$ Since
		$$-\infty <g_{\infty}<g_{t}<x_{k}<h_{t}<h_{\infty}<\infty,$$
		then there exists a subsequence $\{x_{k_{n}}\}$ of $\{x_{k}\}$ such that $x_{k_{n}} \rightarrow x_{0}\in(g_{\infty},h_{\infty})$ as $n\rightarrow \infty.$ Without loss of generality, we assume that $x_{k}\rightarrow x_{0}$ as $k\rightarrow \infty.$
		
		Let
		$$U_{k}(x,t)=U(x,t_{k}+t), V_{k}(x,t)=V(x,t_{k}+t),$$
		for $(x,t)\in [g(t_{k}+t),h(t_{k}+t)]\times (-t_{k},\infty).$
		In view of (\ref{2.2}) and (\ref{4.3}), there exists a subsequence $\{(U_{k_{n}},V_{k_{n}})\}$ of $\{(U_{k},V_{k})\}$ such that
		$(U_{k_{n}},V_{k_{n}})\rightarrow (\tilde{U},\tilde{V}) \enspace as\enspace  n\rightarrow \infty.$ And $(\tilde{U},\tilde{V})$ satisfies
		\begin{equation}\label{4.11}
		\left\{\begin{array}{ll}{\tilde{U}_{t}=D_{1} \tilde{U}_{x x}-\mu \tilde{U}_{x}+a_{1}\left(N_{1}-\tilde{U}\right) \tilde{V}-\gamma \tilde{U},} & {g_{\infty}<x<h_{\infty}, \enspace t\in (-\infty,\infty),} \\ {\tilde{V}_{t}=D_{2} \tilde{V}_{x x}+a_{2}\left(N_{2}-\tilde{V}\right) \tilde{U}-d \tilde{V},} & {g_{\infty}<x<h_{\infty}, \enspace t \in (-\infty,\infty),} \end{array}\right.
		\end{equation}
		with $\tilde{U}(h_{\infty},t)=0$ for $t\in (-\infty,\infty).$
		Note that $\tilde{U}(x_{0},0)\geq \frac{\theta}{2}$, then  by strong maximum principle, we get $\tilde{U}>0$ in $(g_{\infty},h_{\infty})\times(-\infty,\infty)$.
		Since $$\tilde{U}_{t}-D_{1} \tilde{U}_{x x}+\mu \tilde{U}_{x}+({a_{1}N_{2}+\gamma})\tilde{U}\geq 0,$$
		applying Hopf Lemma at the point $(h_{\infty},0)$, we can get $\tilde{U}_{x}(h_{\infty},0)<0.$
	    It implies that there exists a $\delta_0>0$ such that
	\begin{equation}\label{4.12}
	U_{x}(h(t_{k_{n}}),t_{k_{n}})=(U_{k_{n}})_{x}(h(t_{k_{n}}),0)\leq-\delta_0<0, for \enspace n\gg 1,
	\end{equation}
	so $h^\prime(t_{k_{n}})\geq \nu \delta_0>0$ as $ n$ sufficiently large. Since $h(t)$ is bounded, then $h^{\prime}(t)\rightarrow 0$ as $t\rightarrow \infty$, so $h^\prime(t_{k_{n}})\rightarrow 0$ as $n\rightarrow \infty $, which is a contradiction. Therefore, $	\lim\limits_{t\rightarrow +\infty}||U(\cdot,t)||_{C(g(t),h(t))}=0.$
		
		For any given $\epsilon >0$, there exists a $T>0$ such that $U(x,t)\leq \epsilon$ for $x\in[g(t),h(t)]$ and $t>T$. Therefore, $V_{t}-D_{2} V_{x x}\leq a_{2}N_{2}\epsilon-\gamma V.$
		By Comparison Principle, we get
		$$\lim\limits_{t\rightarrow +\infty}\sup\limits||V(\cdot,t)||_{C(g(t),h(t))}\leq\frac{a_{2}N_{2}}{\gamma}\epsilon.$$
		Since $\epsilon$ is arbitrary, $\lim\limits_{t\rightarrow +\infty}||V(\cdot,t)||_{C(g(t),h(t))}=0$ holds.
	\end{proof}
	
	Moreover, we have the following result.
	\begin{theorem}\label{t43}\quad
		If $ h_{\infty}-g_{\infty}<\infty$, then $R^D_{0}((g_{\infty},h_{\infty}),\mu,D_{1},D_{2})=R^D_{0}((g_{\infty},h_{\infty}),\mu,\gamma,d)\leq 1.$
	\end{theorem}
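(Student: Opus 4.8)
The plan is to recast the claim as a statement about the sign of a principal eigenvalue and then contradict the extinction obtained in Theorem \ref{t42}. By Lemma \ref{l31}, applied on the limiting interval $(g_\infty,h_\infty)$ rather than on $(-h_0,h_0)$, the quantity $1-R^D_0((g_\infty,h_\infty),\mu,D_1,D_2)$ has the same sign as the principal eigenvalue $\lambda_0$ of problem (\ref{3.3}) posed on $(g_\infty,h_\infty)$. Hence it suffices to prove $\lambda_0\geq 0$, i.e. $R^D_0((g_\infty,h_\infty),\mu,D_1,D_2)\leq 1$. I would argue by contradiction, assuming $R^D_0((g_\infty,h_\infty),\mu,D_1,D_2)>1$, equivalently $\lambda_0<0$.

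Next I would exploit that, by formulas (\ref{3.2})--(\ref{3.4}), $R^D_0$ depends on the interval only through its length $L=h_\infty-g_\infty$ and is continuous and strictly increasing in $L$. Therefore there is a fixed subinterval $[g^*,h^*]\subset(g_\infty,h_\infty)$, of length slightly smaller than $L$, on which the associated principal eigenvalue $\lambda_0^*$ is still negative; let $(\phi,\psi)>0$ be a corresponding eigenfunction of (\ref{3.3}) on $[g^*,h^*]$, normalized so that $\phi,\psi\leq 1$ and vanishing at $g^*,h^*$. Since $g(t)\to g_\infty$ and $h(t)\to h_\infty$, there is $T>0$ with $[g^*,h^*]\subset(g(t),h(t))$ for all $t\geq T$, and by the strong maximum principle $U,V>0$ on this compact set. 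I would then take the stationary pair $(\underline U,\underline V)=(\delta\phi,\delta\psi)$ as a lower solution on $[g^*,h^*]\times[T,\infty)$: substituting into the reaction terms and using the eigenvalue identities reduces the defects to $\delta\phi(\lambda_0^*+a_1\delta\psi)$ and $\delta\psi(\lambda_0^*+a_2\delta\phi)$, both $\leq 0$ once $\delta$ is small; shrinking $\delta$ further guarantees $\delta\phi\leq U(\cdot,T)$ and $\delta\psi\leq V(\cdot,T)$ on $[g^*,h^*]$.

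Finally, the system (\ref{1.3}) is cooperative on the physical range, since $a_1(N_1-U)\geq 0$ and $a_2(N_2-V)\geq 0$ by Lemma \ref{l25}; hence a comparison principle applies on the fixed domain $[g^*,h^*]$, where $U,V$ stay positive on the lateral boundary, giving $U(\cdot,t)\geq\delta\phi$ and $V(\cdot,t)\geq\delta\psi$ for all $t\geq T$. Consequently $\liminf_{t\to\infty}\|U(\cdot,t)\|_{C(g(t),h(t))}\geq\delta\max_{[g^*,h^*]}\phi>0$, which contradicts the extinction $\|U(\cdot,t)\|\to 0$ established in Theorem \ref{t42} under $h_\infty-g_\infty<\infty$. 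This forces $\lambda_0\geq 0$, i.e. $R^D_0((g_\infty,h_\infty),\mu,D_1,D_2)\leq 1$. The main obstacle is the comparison step: because the original problem has moving boundaries, one cannot invoke Lemma \ref{l22} directly, so the argument hinges on passing to a fixed interior subinterval and using interior positivity to keep the lower solution below $(U,V)$ on its now-fixed lateral boundary. The subsidiary technical point is the continuity and monotonicity of the principal eigenvalue in the domain length, which is what lets us keep $\lambda_0^*<0$ after shrinking $(g_\infty,h_\infty)$ to $[g^*,h^*]$.
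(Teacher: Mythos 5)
Your proof is correct, but it takes a genuinely different route from the paper's. The paper also argues by contradiction, but instead of an eigenfunction subsolution it perturbs the decay rates: assuming $R^D_{0}((g_{\infty},h_{\infty}),\mu,D_{1},D_{2})>1$, it picks $T\gg 1$ and small $\varepsilon>0$ with $R^D_{0}((g(T),h(T)),\mu,\gamma+\varepsilon,d+\varepsilon)>1$ (continuity in the coefficients rather than in the domain), then introduces the auxiliary Dirichlet problem (\ref{4.13}) on the \emph{fixed} interval $(g(T),h(T))$ with rates $\gamma+\varepsilon$, $d+\varepsilon$ and initial data $(U(\cdot,T),V(\cdot,T))$; the maximum principle gives $U\geq e^{\varepsilon(t-T)}H$, $V\geq e^{\varepsilon(t-T)}M$, and the monotone-iteration theory of Pao is invoked to show $(H,M)$ converges to a positive steady state $(\tilde H,\tilde M)$, so $U(0,t)$ grows exponentially, contradicting the decay in Theorem \ref{t42}. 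Your version replaces all of that machinery with the small stationary subsolution $(\delta\phi,\delta\psi)$ built from the principal eigenfunction on a slightly shrunk interior interval $[g^*,h^*]$, exactly the technique the paper itself uses later in Theorem \ref{t51}; the key observations you supply — that Lemma \ref{l22} cannot be applied directly because it is formulated for the moving-boundary problem, that on a fixed cylinder the cooperative structure ($U\leq N_1$, $V\leq N_2$ from Lemma \ref{l25}) permits a standard comparison with zero lateral data, and that the eigenvalue stays negative under a small shrinking of the interval since $R^D_0$ depends continuously and monotonically on the interval length — are precisely what make this shortcut rigorous. What each approach buys: yours is more self-contained, needing only Lemma \ref{l31} and a textbook fixed-domain comparison, and avoids the existence and global attractivity of the positive steady state of (\ref{4.13}); the paper's yields the stronger quantitative conclusion of exponential growth of $U(0,t)$, although a bounded-below $\liminf$, as in your argument, is all that is needed to contradict Theorem \ref{t42}.
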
	
	\begin{proof}\quad  We prove this theorem by contradiction. Assume that $R^D_{0}((g_{\infty},h_{\infty}),\mu,D_{1},D_{2})> 1$, then there exists $T\gg 1$ such that $R^D_{0}((g(T),h(T)),\mu,\gamma,d)>1$. For small $\varepsilon>0,$ according to the continuity of $R^D_{0}((g(T),h(T)),\mu,\gamma,d)$ in $\gamma$ and $d$, we get $$R^D_{0}((g(T),h(T)),\mu,\gamma+\varepsilon,d+\varepsilon)>1$$ with $\varepsilon$ dependent on $T$.
		
		Let $(H(x,t),M(x,t))$ be the solution of
		\begin{equation}\label{4.13}
		\left\{\begin{array}{ll}{H_{t}=D_{1} H_{x x}-\mu H_{x}+a_{1}\left(N_{1}-H\right) M-(\gamma+\varepsilon) H,} & {g(T)<x<h(T), \enspace t>T,} \\ {M_{t}=D_{2} M_{x x}+a_{2}\left(N_{2}-M\right) H-(d+\varepsilon) M,} & {g(T)<x<h(T), \enspace t>T,} \\ {H(g(T), t)=H(h(T), t)=0,} & { t>T,}\\ {M(g(T), t)=M(h(T), t)=0,} & { t>T,}  \\ {H(x,T)=U(x,T),} & {g(T)\leq x\leq h(T),} \\ {M(x,T)=V(x,T),} & {g(T)\leq x\leq h(T).} \end{array}\right.
		\end{equation}
		By maximal principle, it follows that $U(x,t)\geq e^{\varepsilon(t-T)}H(x,t)$ and $V(x,t)\geq e^{\varepsilon(t-T)}M(x,t)$ in $[g(T),h(T)]\times [T,\infty).$
		Moreover, in view of $R^D_{0}((g(t),h(t)),\mu,\gamma+\varepsilon,d+\varepsilon)>1$, by using the upper and lower solution meothod with monotone iterations in \cite{pao2012nonlinear} (also see to \cite{tian2018advection}), we can get
		$$	\lim\limits_{t\rightarrow +\infty}H(x,t)=\tilde{H}(x), \lim\limits_{t\rightarrow +\infty}M(x,t)=\tilde{M}(x)$$ uniformly on  $[g(T),h(T)]$, where $(\tilde{H}(x),\tilde{M}(x))$ is the positive steady solution of (\ref{4.13}) and satisfies
		\begin{equation}\label{4.14}
		\left\{\begin{array}{ll}{-D_{1} \tilde{H}^{\prime\prime}+\mu \tilde{H}^{\prime}=a_{1}\left(N_{1}-\tilde{H}\right) \tilde{M}-(\gamma+\varepsilon) \tilde{H},} & {g(T)<x<h(T), } \\ {-D_{2}\tilde{M}^{\prime\prime}=a_{2}\left(N_{2}-\tilde{M}\right) \tilde{H}-(d+\varepsilon) \tilde{M},} & {g(T)<x<h(T),} \\ {\tilde{H}(g(T))=\tilde{H}(h(T))=0,}\\{\tilde{M}(g(T))=\tilde{M}(h(T))=0.}     \end{array}\right.
		\end{equation}
		Therefore, $$\lim\limits_{t\rightarrow +\infty}H(0,t)=\tilde{H}(0), \lim\limits_{t\rightarrow +\infty}M(x,0)=\tilde{M}(0),$$ it imples that
		\begin{equation}\label{4.145}
		\begin{aligned}
		 U(0,t)\geq e^{\varepsilon(t-T)}\tilde{H}(0)>0,  V(0,t)\geq e^{\varepsilon(t-T)}\tilde{M}(0)>0
		 \end{aligned}
		 \end{equation}
		 in $[T,\infty)$. Since $h_\infty-g_\infty<\infty,$ by Theorem \ref{t42}, we get $U(0,t)\rightarrow 0$, $V(0,t)\rightarrow 0$ as $t\rightarrow \infty$, which is contradict to (\ref{4.145}). Hence, the proof is completed.
	    \end{proof}
	
	The following result is an ordinary corollary of the above theorem, which is similar to the argument in \cite{ge2015sis}. It implies that  $h_\infty$ and $g_\infty$ will be finite or infinite simultaneously under the assumption of $(H)$.
	\begin{corollary}\label{t44}\quad
		Assume that (H) holds, if $h_\infty<\infty$ or $-g_\infty<\infty$, then $h_\infty-g_\infty<\infty$. Moreover, we get $R^D_{0}((g_\infty,h_\infty),\mu,D_{1},D_{2})\leq 1.$
	\end{corollary}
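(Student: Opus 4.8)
The plan is to prove the first assertion by contradiction and then read the second assertion off from Theorem \ref{t43}: once $h_\infty-g_\infty<\infty$ is established, Theorem \ref{t43} immediately gives $R^D_0((g_\infty,h_\infty),\mu,D_1,D_2)\le 1$. Hence the entire content is to show that finiteness of one endpoint forces finiteness of the other. By the left--right symmetry of the argument I would treat the case $h_\infty<\infty$ and assume, for contradiction, that $-g_\infty=\infty$; then $h_\infty-g_\infty=\infty$, so $h(t)-g(t)\to\infty$, and the high-risk-at-infinity part of $(H)$ (equivalently $R_0(\mu)>1$, via Property \ref{p32}(3) and the discussion preceding $(H)$) yields $R^F_0(\infty)=R^D_0((g_\infty,h_\infty),\mu,D_1,D_2)>1$.

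Next I would localize this supercriticality to a fixed bounded interval. Since $R^D_0$ is monotone in the spatial domain and $R^D_0((-\infty,h_\infty),\mu,D_1,D_2)=R^F_0(\infty)>1$, I can fix $b_0<h_\infty$ and choose $a_0$ sufficiently negative so that the bounded interval $(a_0,b_0)$ already satisfies $R^D_0((a_0,b_0),\mu,D_1,D_2)>1$. Because $g(t)\to-\infty$ and $h(t)$ increases to $h_\infty>b_0$, we have $(a_0,b_0)\subset(g(t),h(t))$ for all large $t$. On this fixed interval I would rerun the auxiliary construction of Theorem \ref{t43}: compare $(U,V)$ from below with the solution $(H,M)$ of the cooperative subsystem on $(a_0,b_0)$ with Dirichlet data and slightly enlarged removal rates $\gamma+\varepsilon,\,d+\varepsilon$ (chosen so that $R^D_0$ stays above $1$), and use that this subsystem converges to a positive steady state. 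This produces a fixed interior point $x_0\in(a_0,b_0)$, with $x_0<h_\infty$, such that $\liminf_{t\to\infty}U(x_0,t)\ge c>0$; in particular $U$ does not decay to $0$.

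The remaining step is to convert this non-decay into motion of the right front, along the lines of Step 2 of Theorem \ref{t42}. Taking $t_k\to\infty$ and using Lemma \ref{l25} together with uniform parabolic estimates on compact subsets of $(-\infty,h_\infty)$, I would extract a limit $(\tilde U,\tilde V)$ solving the limiting system on $(-\infty,h_\infty)\times\mathbb{R}$ with $\tilde U(h_\infty,\cdot)=0$ and $\tilde U(x_0,0)\ge c>0$; the strong maximum principle gives $\tilde U>0$ in the interior and Hopf's lemma at $(h_\infty,0)$ gives $\tilde U_x(h_\infty,0)<0$. Hence $U_x(h(t_k),t_k)\le-\delta_0<0$, so $h'(t_k)=-\nu U_x(h(t_k),t_k)\ge\nu\delta_0>0$ along the sequence. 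On the other hand $h'\ge 0$ and $\int_0^\infty h'(t)\,dt=h_\infty-h_0<\infty$, while a uniform-in-time Hölder bound on $h'$ forces $h'(t)\to 0$; this contradicts $h'(t_k)\ge\nu\delta_0$. Therefore $-g_\infty<\infty$, and the symmetric argument (interchanging the two fronts) handles the hypothesis $-g_\infty<\infty$. Combined with Theorem \ref{t43}, this finishes the corollary.

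The main obstacle I anticipate is exactly this last regularity point: the global straightening estimates of Step 1 of Theorem \ref{t42} degenerate here, since $h_\infty-g_\infty=\infty$ makes the coefficient $A=4/(h(t)-g(t))^2$ tend to $0$, so I cannot simply quote them to obtain $h'(t)\to 0$. I would instead work only in a neighborhood $(h(t)-1,h(t))$ of the right front, which for large $t$ lies well inside the domain and away from $g(t)$; there the equation for $U$ is uniformly parabolic with bounded coefficients, and local $L^p$/Schauder boundary estimates give uniform-in-$t$ Hölder control of $U_x(h(t),t)$, hence of $h'$, so that $\int_0^\infty h'<\infty$ together with Barbalat's lemma yields $h'(t)\to 0$. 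A secondary point that needs care is that the non-decay must be anchored at a point $x_0$ staying at finite distance from $h_\infty$ rather than drifting to $-\infty$, which is precisely why I localize $R^D_0>1$ to the fixed interval $(a_0,b_0)$ with $b_0<h_\infty$ before passing to the limit.
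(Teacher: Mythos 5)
Your proposal is correct, and it supplies precisely what the paper leaves out: the paper gives no actual proof of this corollary, stating only that it is an ``ordinary corollary'' of Theorem \ref{t43} whose argument is similar to that of \cite{ge2015sis}. Your reconstruction follows exactly the route that reference and the paper's own toolkit suggest, and it reuses the paper's machinery consistently: contradiction via $(H)$ and Property \ref{p32} to get supercriticality once $h(t)-g(t)\to\infty$; localization of $R^D_0>1$ to a fixed bounded interval $(a_0,b_0)$ with $b_0<h_\infty$, followed by the Theorem~\ref{t43}-type comparison with the Dirichlet problem with removal rates $\gamma+\varepsilon,d+\varepsilon$ to obtain non-decay of $U$ at a fixed anchor point; then the Step 2 argument of Theorem \ref{t42} (limit extraction, strong maximum principle, Hopf lemma) to force $h^{\prime}(t_k)\geq\nu\delta_0>0$ along a sequence, against $h^{\prime}(t)\to 0$. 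You also correctly identified the two places where ``ordinary'' genuinely breaks down and repaired them in the standard way: the global straightening estimates of Theorem \ref{t42} degenerate here because $A=4/(h(t)-g(t))^2\to 0$, so $h^{\prime}(t)\to 0$ must be rederived by straightening only a fixed-width collar behind the right front (where Lemma \ref{l26} keeps the drift coefficient bounded, and uniform local $L^p$/Schauder estimates give a uniform H\"{o}lder bound on $h^{\prime}$, whence $\int_0^\infty h^{\prime}\,dt=h_\infty-h_0<\infty$ yields the decay); and the non-decay point must be anchored at finite distance from $h_\infty$, which your choice of $(a_0,b_0)$ guarantees, whereas the compactness trick used in Theorem \ref{t42} to locate $x_0$ is unavailable on an unbounded limiting domain. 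The second assertion then indeed reads off from Theorem \ref{t43}, as you say.
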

	Next we will give some suffient conditions for vanishing of the virus.
	
	\begin{theorem}\label{t45}\quad
		If $R^F_{0}(0)=R^D_{0}((-h_{0},h_{0}),\mu,D_{1},D_{2})< 1,$ then $h_{\infty}-g_{\infty}<\infty,$ and
		\begin{equation}\label{4.15}
		\lim\limits_{t\rightarrow +\infty}||U(\cdot,t)||_{C(g(t),h(t))}=\lim\limits_{t\rightarrow +\infty}||V(\cdot,t)||_{C(g(t),h(t))}=0,
		\end{equation}
		if given $||U_{0}||_{L_{\infty}}$ and $||V_{0}||_{L_{\infty}}$ are so small.	
	\end{theorem}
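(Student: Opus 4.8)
The plan is to prove vanishing by constructing an explicit upper solution whose free boundary saturates at a finite value and whose density components decay to zero, and then to conclude via the Comparison Principle (Lemma~\ref{l22}) and Theorem~\ref{t42}. First I would reformulate the hypothesis spectrally: since $R^F_0(0)=R^D_0((-h_0,h_0),\mu,D_1,D_2)<1$, Lemma~\ref{l31} gives that the principal eigenvalue $\lambda_0$ of (\ref{3.3}) on $(-h_0,h_0)$ is strictly positive. The principal eigenvalue depends continuously and monotonically on the interval, so I can fix $\delta>0$ so small that the principal eigenvalue $\lambda_\delta$ of the corresponding problem on the enlarged interval $(-h^*,h^*)$, with $h^*:=h_0(1+\delta)$, is still positive. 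Let $(\phi,\psi)$ be its positive eigenfunctions, normalized, with $\phi,\psi>0$ on $(-h^*,h^*)$, $\phi=\psi=0$ at $\pm h^*$, and $\phi'(h^*)<0<\phi'(-h^*)$.

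Next I would build the barrier. Set a slowly saturating boundary
\begin{equation*}
\bar h(t)=h_0\Bigl(1+\delta-\tfrac{\delta}{2}e^{-\rho t}\Bigr),\qquad \bar g(t)=-\bar h(t),
\end{equation*}
with $\rho>0$ small, so that $\bar h(0)>h_0$, $\bar h$ is increasing, and $\bar h(t)\uparrow h^*<\infty$; and rescale the eigenfunctions to vanish exactly on $x=\pm\bar h(t)$ by setting
\begin{equation*}
\bar U(x,t)=Me^{-\rho t}\phi\!\left(\tfrac{h^*x}{\bar h(t)}\right),\qquad \bar V(x,t)=Me^{-\rho t}\psi\!\left(\tfrac{h^*x}{\bar h(t)}\right).
\end{equation*}
Verifying that $(\bar U,\bar V;\bar g,\bar h)$ is an upper solution in the sense of (\ref{2.4}) splits into two parts. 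For the differential inequalities, substituting and using the eigenvalue identities produces a leading decay term $\lambda_\delta\bar U>0$ in the interior, against errors generated by the time-dependent rescaling $x\mapsto h^*x/\bar h(t)$ (of size $O(\delta)+O(\rho)$) and the quadratic coupling $a_1\bar U\bar V$, $a_2\bar U\bar V$ (of size $O(M)$); choosing $\rho$ and then $M$ small makes the gap $\lambda_\delta$ dominate. For the Stefan inequalities, $\bar h'(t)=\tfrac{h_0\delta\rho}{2}e^{-\rho t}$, while $-\nu\bar U_x(\bar h(t),t)$ is positive and proportional to $M|\phi'(h^*)|$, so $\bar h'(t)\ge-\nu\bar U_x(\bar h(t),t)$ and the symmetric condition at $\bar g$ hold once $M$ is small relative to $\rho\delta$.

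To reconcile the smallness requirements and conclude, note that to dominate the initial data I need $\bar U(\cdot,0)\ge U_0$ and $\bar V(\cdot,0)\ge V_0$ on $[-h_0,h_0]$; since $\phi,\psi$ are bounded below by a positive constant $c_0$ on the compact subinterval that $[-h_0,h_0]$ maps into under the rescaling at $t=0$, it suffices that $Mc_0\ge\max\{\|U_0\|_{L^\infty},\|V_0\|_{L^\infty}\}$. Thus $M$ must satisfy $c_0^{-1}(\|U_0\|_{L^\infty}+\|V_0\|_{L^\infty})\lesssim M\lesssim\rho\delta$, which is solvable \emph{precisely} when $\|U_0\|_{L^\infty}$ and $\|V_0\|_{L^\infty}$ are small enough, which is exactly the hypothesis. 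With the upper solution in hand, Lemma~\ref{l22} and Remark~\ref{r24} yield $h(t)\le\bar h(t)\le h^*$ and $g(t)\ge\bar g(t)\ge-h^*$ for all $t>0$, hence $h_\infty-g_\infty\le 2h^*<\infty$. Then (\ref{4.15}) follows immediately from Theorem~\ref{t42} (or directly from $U\le\bar U\le Me^{-\rho t}\to0$ and likewise for $V$).

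I expect the main obstacle to be the error bookkeeping in the differential inequalities. The terms generated by the rescaling involve $\phi'$ and $\psi'$, which do not vanish at $x=\pm\bar h(t)$, so near the free boundary, where the favorable term $\lambda_\delta\bar U$ is itself small, one must check that the advection term $-\mu\bar U_x$ and the boundary-drift term coming from $\partial_t[h^*x/\bar h(t)]$ still combine to a non-negative contribution; the drift term is in fact favorable at the boundary, while the $O(\delta)$ advection term must be absorbed, and confirming this uniformly in $t$ and in $x$ near the boundary is the delicate point.
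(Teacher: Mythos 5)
Your architecture is the same as the paper's: a barrier $(\bar U,\bar V;-\bar h,\bar h)$ with saturating front $\bar h(t)=h_0(1+\delta-\frac{\delta}{2}e^{-\rho t})$ built from principal eigenfunctions, the Comparison Principle, then Theorem \ref{t42}. Working with the eigenfunctions of the \emph{enlarged} interval $(-h^*,h^*)$ (positive eigenvalue $\lambda_\delta$ by domain continuity) instead of the paper's eigenfunctions on $(-h_0,h_0)$ is a legitimate variant, and it even makes the coupling terms cancel exactly through the eigenvalue identity (incidentally, the quadratic terms $a_1\bar U\bar V$, $a_2\bar U\bar V$ are not errors to dominate: for an upper solution $a_1(N_1-\bar U)\bar V\le a_1N_1\bar V$, so they can simply be dropped). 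The genuine gap is exactly the point you flag as ``delicate'': your barrier omits any advection-compensating factor, and with your parameter regime the delicate point cannot be closed. Writing $y=h^*x/\bar h(t)$ and substituting $\bar U=Me^{-\rho t}\phi(y)$, the second derivative rescales by $(h^*/\bar h)^2$ while the first derivative rescales by $h^*/\bar h$, so the advection does not cancel against the eigenvalue identity; the mismatch term
\[
\mu\,\phi'(y)\,\frac{h^*}{\bar h(t)}\Bigl(1-\frac{h^*}{\bar h(t)}\Bigr)
\]
survives. For $\mu>0$ it is negative precisely near the left (upwind) boundary $y=-h^*$, where $\phi'>0$ and where \emph{every} favorable term proportional to $\phi$ or $\psi$ (including $\lambda_\delta\phi$) vanishes. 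The only term available to beat it there is the drift term $-\frac{y\bar h'}{\bar h}\phi'\approx\frac{h^*\bar h'}{\bar h}\phi'(-h^*)$; comparing sizes, $\bar h'=\frac{h_0\delta\rho}{2}e^{-\rho t}$ against $\mu\bigl(\frac{h^*}{\bar h}-1\bigr)=\mu\frac{h_0\delta e^{-\rho t}}{2\bar h}$, the boundary inequality forces $\rho\ge\mu/\bar h(t)$, i.e. $\rho\gtrsim\mu/h_0$ --- a \emph{lower} bound on $\rho$, the opposite of ``choose $\rho$ small.''

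This lower bound is fatal, not merely inconvenient. Feeding $\rho\gtrsim\mu/h_0$ back into the interior inequality, where the unfavorable term $-\rho\phi$ must be dominated by $\lambda_\delta\phi$ plus $O(\delta)$ corrections, forces $\lambda_\delta\gtrsim\mu/h_0$, hence $\lambda_0\gtrsim\mu/h_0$. But the hypothesis $R^F_0(0)<1$ only gives $\lambda_0>0$: when $R^F_0(0)$ is close to $1$ and $\mu$ is not small, your inequalities are mutually incompatible. (Decoupling the saturation rate of $\bar h$ from the decay rate of $\bar U$ does not help: the Stefan condition $\bar h'(t)\ge-\nu\bar U_x(\bar h(t),t)$, whose right side decays like $e^{-\rho t}$, forces the boundary rate to be at most the density rate, and the same chain of inequalities reappears.) The paper's proof resolves precisely this difficulty by inserting the gauge factor $e^{\frac{\mu}{2D_{1}}\left(1-\frac{h_{0}}{\vartheta(t)}\right)x}$ into $\overline U$ in (\ref{4.20}): it corrects the tilt of the rescaled eigenfunction so that it matches the fixed advection coefficient $\mu$ at every scale $\vartheta(t)$, converting the first-order mismatch above into zeroth-order terms proportional to $\overline U$ --- including the \emph{favorable} contribution $\frac{\mu^2}{4D_1}\bigl(1-\frac{h_0^2}{\vartheta^2(t)}\bigr)\overline U\ge0$ --- at the price of losing exact coupling cancellation, which is why the paper needs the ratio estimate $\frac{1}{\tilde L}\le\phi/\psi\le\tilde L$ (its Claim 2). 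Without this factor, or an equivalent gauge transformation, your construction does not prove the theorem.
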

	\begin{proof}\quad
		Following from the Lemma \ref{l31}, if $R^F_{0}(0)=R^D_{0}((-h_{0},h_{0}),\mu,D_{1},D_{2})< 1$, then there exists $\lambda_{0}>0$ satisfying
		\begin{equation}\label{4.16}
		\left\{\begin{array}{ll}{-D_{1} \phi_{x x}=-\mu\phi_{x}+a_{1} N_{1} \psi-\gamma \phi+\lambda_{0}\psi,} & {x\in (-h_{0},h_{0}),} \\ {-D_{2} \psi_{x x}=a_{2} N_{2} \phi-d \psi+\lambda_{0}\psi,} & {x\in (-h_{0},h_{0}), } \\ {\phi(x)=\psi(x)=0,} & {x=\pm h_{0},} \end{array}\right.
		\end{equation}
		where $(\phi,\psi)>0$ in $(-h_0,h_0)$.
		
		Next, we will prove two claims.\\
		\textbf{Claim 1.} There exists $L\gg 1$ such that
		\begin{equation}\label{4.17}
		x\phi^{\prime}(x)<L\phi(x) , x\psi^{\prime}(x)<L\psi(x)
		\end{equation}
		for $x \in[-h_{0},h_{0}].$
		
		Now we first discuss the case of $\phi(x)$. In fact, since $\phi^{\prime}(-h_{0})>0$ and $\phi^{\prime}(h_{0})<0$,we denote that $x_{1}$ and $x_{2}$ are the first and last critial point from $-h_{0}$ to $h_{0}$, then $\phi^{\prime}(x_{1})=0, \phi^{\prime}(x_{2})=0$ and
		$-h_{0}< x_{1}\leq x_{2}<h_{0}.$
	   Therefore, there exists a $L_1>0$ such that $x\phi^{\prime}(x)<L_1\phi(x)$ holds for $x\in[-h_{0},x_{1}]$ or $x\in[x_{2},h_{0}]$.
		
		Since $\phi(x)>0$, then
		$$ x\phi^{\prime}(x)\leq x||\phi^{\prime}||_{L^{\infty}([x_{1},x_{2}])}\leq L_2 \min\limits_{[x_{1},x_{2}]}\phi(x)\leq L_2\phi(x)$$ for $x\in[x_{1},x_{2}]$,
		where $L_2\geq \dfrac{h_{0}||\phi^{\prime}||_{L^{\infty}([x_{1},x_{2}])}}{\min\limits_{[x_{1},x_{2}]}\phi(x)}.$ Take $L=\max\{L_1,L_2\}$, then $L$ satisfies the requirement of (\ref{4.17}). Similarly, we can take $L\gg 1$, such that $x\psi^{\prime}(x)<L\psi(x)$ for $x\in[-h_0,h_0].$
		
		\textbf{Claim 2.} There exists $\tilde{L}>0$ such that
		\begin{equation}\label{4.19}
		\frac{1}{\tilde{L}}\leq \dfrac{\phi(x)}{\psi(x)}\leq \tilde{L}, \enspace for \enspace x\in[-h_{0},h_{0}].
		\end{equation}
		
		Indeed, since $\phi^{\prime}(h_{0})<0$ and $\psi^{\prime}(h_{0})<0$, then there exists small $\sigma_{1}>0$ such that  $\phi^{\prime}(x)<\frac{\phi^{\prime}(h_{0})}{2}<0$ and $\psi^{\prime}(x)<\frac{\psi^{\prime}(h_{0})}{2}<0$ for any $x\in [h_{0}-\sigma_{1},h_{0}]$. Let $\tilde{L}_{1}=\max\limits_{[h_{0}-\sigma_{1},h_{0}]}\left\{\dfrac{\phi^{\prime}(x)}{\psi^{\prime}(x)}\right\},$ then $\dfrac{\phi^{\prime}(x)}{\psi^{\prime}(x)}\leq \tilde{L}_1,$ by Cauchy mean value theorem, it follows $\dfrac{\phi(x)}{\psi(x)}=\dfrac{\phi(x)-\phi(h_{0})}{\psi(x)-\psi(h_{0})}
		=\dfrac{\phi^{\prime}(\hat{x})}{\psi^{\prime}(\hat{x})}\leq \tilde{L}_1$
		for any $x\in[h_{0}-\sigma_{1},h_{0}]$ and some $\hat{x}\in [h_{0}-\sigma_{1},h_{0}].$ Let $\tilde{L}_2=\max\limits_{[h_{0}-\sigma_{1},h_{0}]}\left\{\dfrac{\psi^{\prime}(x)}{\phi^{\prime}(x)}\right\},$ then $\dfrac{\psi^{\prime}(x)}{\phi^{\prime}(x)}\leq \tilde{L}_2.$ Similarly, we get
		$\dfrac{\psi(x)}{\phi(x)}\leq \tilde{L}_{2}$
		for any $x\in[h_{0}-\sigma_{1},h_{0}].$ Take $\tilde{L}_{3}=\max\limits\left\{\tilde{L}_{1},\tilde{L}_{2}\right\},$ then
		$\frac{1}{\tilde{L}_{3}}\leq \dfrac{\phi(x)}{\psi(x)}\leq \tilde{L}_{3}$
		for $x\in[h_{0}-\sigma_{1},h_{0}].$ Since $\phi^{\prime}(-h_{0})>0$ and $\psi^{\prime}(-h_{0})>0$, then there exists small $\sigma_{2}>0$ such that  $\phi^{\prime}(x)>\frac{\phi^{\prime}(-h_{0})}{2}>0$ and $\psi^{\prime}(x)>\frac{\psi^{\prime}(-h_{0})}{2}>0$ for any $x\in [-h_{0},-h_{0}+\sigma_{2}]$.
		Thus, there exists $\tilde{L}_{4}$ such that
		$\frac{1}{\tilde{L}_{4}}\leq \dfrac{\phi(x)}{\psi(x)}\leq \tilde{L}_{4}$
		for $x\in [-h_{0},-h_{0}+\sigma_{2}]$. Since $\phi(x)>0, \psi(x)>0$, then there exists $\tilde{L}_{5}>0$ such that $\frac{1}{\tilde{L}_{5}}\leq \dfrac{\phi(x)}{\psi(x)}\leq \tilde{L}_{5}$ for $x\in [-h_{0}+\sigma_{2},h_{0}-\sigma_{1}]$.
		Therefore, let $\tilde{L}=\max\limits\{\tilde{L}_{3},\tilde{L}_{4},\tilde{L}_{5}\}$, then (\ref{4.19}) holds.
		
		Let
		\begin{equation}\label{4.20}
		\begin{aligned}
		&\vartheta(t)=h_{0}(1+\delta-\frac{\delta}{2}e^{-\delta t}),
		\\&\overline{U}(x,t)=a_0 e^{-\delta t}\phi\left(\frac{x h_{0}}{\vartheta(t)}\right)e^{{\frac{\mu}{2D_{1}}}\left(1-\frac{h_{0}}{\vartheta(t)}\right)x},
		\\&\overline{V}(x,t)=a_0 e^{-\delta t}\psi\left(\frac{x h_{0}}{\vartheta(t)}\right),
		\end{aligned}
		\end{equation}
		for any $x\in(-\vartheta(t),\vartheta(t))$ and $t\geq0,$ where $a_0>0$ and $0<\delta\ll 1$ such that
		\begin{equation}\label{4.21}
		\begin{aligned}
		&-\delta-\frac{L h^2_{0}}{\vartheta^2(t)} \frac{\delta^2}{2}-\dfrac{\mu h^2_{0}}{4D_{1}}\dfrac{\delta^2}{\vartheta(t)}+\frac{\mu^2}{4D_{1}}\left(1-\dfrac{h^2_{0}}{\vartheta^2(t)}\right)+\gamma\left(1-\dfrac{h^2_{0}}{\vartheta^2(t)}\right)\\&+\lambda_{0}\dfrac{h^2_{0}}{\vartheta^2(t)}
		+a_{1}N_{1}\tilde{L}\left(\dfrac{h^2_{0}}{\vartheta^2(t)}-\frac{\delta}{1+\frac{\delta}{2}}\right)\geq 0,
		\end{aligned}
		\end{equation}
		and
		\begin{equation}\label{4.22}
		\begin{aligned}
		&-\delta -\dfrac{L h^2_{0}}{\vartheta^2(t)} \frac{\delta^2}{2} +a_{2}N_{2}\tilde{L}\left(\dfrac{h^2_{0}}{\vartheta^2(t)}-1\right)+d\left(1-\dfrac{h^2_{0}}{\vartheta^2(t)}\right)+\lambda_{0}\dfrac{h^2_{0}}{\vartheta^2(t)}\geq 0.
		\end{aligned}
		\end{equation}
		Further, direct calculation gives
		\begin{equation}\label{4.23}
		\begin{aligned}
		&\overline{U}_{t}-D_{1}\overline{U}_{xx}+\mu\overline{U}_{x}-a_{1}(N_{1}-\overline{U})\overline{V}+\gamma\overline{U}
		\\&\geq \overline{U}_{t}-D_{1}\overline{U}_{xx}+\mu\overline{U}_{x}
		-a_{1}N_{1}\overline{V}+\gamma\overline{U}
		\\&=-\delta \overline{U}-\frac{xh^2_{0}}{\vartheta^2(t)} \frac{\delta^2}{2} \dfrac{\phi^{\prime}}{\phi}\overline{U}+\dfrac{\mu h^2_{0}x}{4D_{1}}\dfrac{\delta^2}{\vartheta^2(t)}\overline{U}+\frac{\mu^2}{4D_{1}}\left(1-\dfrac{h^2_{0}}{\vartheta^2(t)}\right)\overline{ U}
		\\&+a_{1}N_{1}\overline{V}\left(\dfrac{h^2_{0}}{\vartheta^2(t)}e^{{\frac{\mu}{2D_{1}}}\left(1-\frac{h_{0}}{\vartheta(t)}\right)x}-1\right)+\gamma\left(1-\dfrac{h^2_{0}}{\vartheta^2(t)}\right)\overline{V}+\lambda_{0}\dfrac{h^2_{0}}{\vartheta^2(t)}\overline{U}
		\\&\geq\overline{U}\left(-\delta-\frac{L h^2_{0}}{\vartheta^2(t)} \frac{\delta^2}{2}-\dfrac{\mu h^2                                                                                                                                                        _{0}}{4D_{1}}\dfrac{\delta^2}{\vartheta(t)}+\frac{\mu^2}{4D_{1}}\left(1-\dfrac{h^2_{0}}{\vartheta^2(t)}\right)\right)
		\\&+\overline{U}\left(\gamma\left(1-\dfrac{h^2_{0}}{\vartheta^2(t)}\right)+\lambda_{0}\dfrac{h^2_{0}}{\vartheta^2(t)}+a_{1}N_{1}\tilde{L}\left(\dfrac{h^2_{0}}{\vartheta^2(t)}-\frac{\delta}{1+\frac{\delta}{2}}\right)\right)
		\\&\geq 0
		\end{aligned}
		\end{equation}
		and
		\begin{equation}\label{4.24}
		\begin{aligned}
		&\overline{V}_{t}-D_{2}\overline{V}_{xx}-a_{2}(N_{2}-\overline{V})\overline{U}+d\overline{V}
		\\&\geq\overline{V}_{t}-D_{2}\overline{V}_{xx}-a_{2}N_{2}\overline{U}+d\overline{V}
		\\&=-\delta \overline{V}-\dfrac{x h^2_{0}}{\vartheta^2(t)} \frac{\delta^2}{2}  \dfrac{\psi^{\prime}}{\psi}\overline{V}+a_{2}N_{2}\dfrac{\phi}{\psi}\overline{V} \left(\dfrac{h^2_{0}}{\vartheta^2(t)}-1\right)
		\\&+d\overline{V}\left(1-\dfrac{h^2_{0}}{\vartheta^2(t)}\right)+\lambda_{0}\overline{V}\dfrac{h^2_{0}}{\vartheta^2(t)}
		\\&\geq\overline{V}\left(-\delta -\dfrac{L h^2_{0}}{\vartheta^2(t)} \frac{\delta^2}{2} +a_{2}N_{2}\tilde{L}\left(\dfrac{h^2_{0}}{\vartheta^2(t)}-1\right)+d\left(1-\dfrac{h^2_{0}}{\vartheta^2(t)}\right)+\lambda_{0}\dfrac{h^2_{0}}{\vartheta^2(t)}\right)
		\\&\geq0
		\end{aligned}
		\end{equation}
		for any $x\in(-\vartheta(t),\vartheta(t))$ and $t\geq0.$
		
		Take $a_0=\frac{\delta^2h_{0}}{2\nu e^{{\frac{\mu}{2D_{1}}}h_{0}\delta}}\min\limits\left\{\frac{-1}{\phi^{\prime}(h_{0})},\frac{1}{\phi^{\prime}(-h_{0})} \right\}$, then
		
		\begin{equation}\label{4.25}
		\left\{\begin{array}{ll}{\overline{U}_{t}-D_{1} \overline{U}_{x x} \geq -\mu\overline{U}_{x}+a_{1}\left(N_{1}-\overline{U}\right) \overline{V}-\gamma \overline{U},} & {-\vartheta(t)<x<\vartheta(t), \quad t\geq0,} \\ {\overline{V}_{t}-D_{2} \overline{V}_{x x} \geq a_{2}\left(N_{2}-\overline{V}\right) \overline{U}-d \overline{V},} & {-\vartheta(t)<x<\vartheta(t), \enspace t\geq0,} \\ {\overline{U}(0, t) \geq U(0, t), \quad \overline{V}(0, t) \geq V(0, t),} & {t\geq0,} \\ {\overline{U}(x, t)\geq 0,\overline{V}(x, t)\geq 0,} & {x= \pm \vartheta(t), \quad t\geq0,} \\ {\vartheta^{\prime}(t) \geq-\nu \overline{U}_{x}(\vartheta(t), t),-\vartheta^{\prime}(t) \leq-\nu \overline{U}_{x}(-\vartheta(t), t),} & {\enspace t\geq 0.} \end{array}\right.
		\end{equation}
		If
		\begin{equation}\label{4.26}
		\begin{aligned}
		&||U_{0}||_{L_{\infty}}\leq a_0 \min\limits_{x\in [-h_{0},h_{0}]}\phi(\frac{x}{1+\delta})e^{{\frac{\mu}{2D_{1}}}(\frac{\delta}{1+\delta})x},
		\\&||V_{0}||_{L_{\infty}}\leq a_0 \min\limits_{x\in [-h_{0},h_{0}]}\psi(\frac{x}{1+\delta}),
		\end{aligned}
		\end{equation}
		then
		\begin{equation}\label{4.27}
		\begin{aligned}
		&\overline{U}(x, 0)=a_0\phi(\frac{x}{1+\delta})e^{{\frac{\mu}{2D_{1}}}(\frac{\delta}{1+\delta})x}
		\geq {U_{0}(x)},
		\\&\enspace\overline{V}(x, 0)=a_0\psi(\frac{x}{1+\delta})\geq V_{0}(x).
		\end{aligned}
		\end{equation}
		Therefore, $(\overline{ U}(x,t),\overline{V}(x,t);-\vartheta(t),\vartheta(t))$ is an upper solution of (\ref{1.3}). Hence, $h(t)\leq \vartheta(t), g(t)\geq -\vartheta(t)$, then $h_{\infty}-g_{\infty}\leq \lim\limits_{t\rightarrow +\infty}2\vartheta(t)\leq 2h_{0}(1+\delta).$   By Theorem \ref{t42}, we can get $\lim\limits_{t\rightarrow +\infty}||U(\cdot,t)||_{C(g(t),h(t))}=\lim\limits_{t\rightarrow +\infty}||V(\cdot,t)||_{C(g(t),h(t))}=0.$
	\end{proof}
	
	\begin{remark}\label{c46}\quad
		When given initial data $U_0(x)$ and $V_0(x)$ are small enough, if $\mu=0,$ then $R^D_{0}((-h_{0},h_{0}),$ $0,                                                                                                                                                                                                                                                                                                                                                                                                                                                            D_{1},D_{2})=1,$ the disease is spreading according to the arrguments in \cite{lin2017spatial}. However, if $\mu\neq 0,$ then
		$R^D_{0}((-h_{0},h_{0}),\mu,D_{1},D_{2})<1,$ the disease is vanishing by Theorem \ref{t45}.
	\end{remark}
	
	In fact,  even if $U_0(x)$ and $V_0(x)$ are not small, the disease will be extinct when the expanding capability $\nu$ is sufficiently small. Detailed proof can refer to Lemma 3.8 in \cite{du2010spreading}.
	\begin{theorem}\label{t47}\quad
		If $R^F_0(0)=R^D_{0}((-h_{0},h_{0}),\mu,                                                                                                                                                                                                                                                                                                                                                                                                                                                            D_{1},D_{2})<1,$ then there exists a small $\nu^*>0$ depending on $U_0$ and $V_0$ such that
		$h_\infty-g_\infty <\infty$ and
		$\lim\limits_{t\rightarrow +\infty}||U(\cdot,t)||_{C(g(t),h(t))}=\lim\limits_{t\rightarrow +\infty}||V(\cdot,t)||_{C(g(t),h(t))}=0$ when $\nu<\nu^*$.
	\end{theorem}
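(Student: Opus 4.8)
Theorem \ref{t47} asserts that if the initial risk index $R^F_0(0)<1$, then for sufficiently small expanding capacity $\nu$, the disease vanishes. The key insight is that $\nu$ controls how fast the free boundaries can move: making $\nu$ small forces $h_\infty-g_\infty$ to stay below the critical length determined by the condition $R^D_0<1$. The plan is to construct an explicit upper solution whose spatial support is a slowly expanding interval $(-\vartheta(t),\vartheta(t))$ that converges to a finite length, and then invoke the Comparison Principle (Lemma \ref{l22}) together with Theorem \ref{t42}.

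First I would exploit the hypothesis $R^D_0((-h_0,h_0),\mu,D_1,D_2)<1$ via Lemma \ref{l31} to produce a principal eigenvalue $\lambda_0>0$ and a positive eigenpair $(\phi,\psi)$ solving the problem \eqref{4.16}, reusing the auxiliary estimates from the proof of Theorem \ref{t45}, namely Claim 1 (the bound $x\phi'(x)<L\phi(x)$, and likewise for $\psi$) and Claim 2 (the two-sided bound $\tfrac{1}{\tilde L}\le \phi/\psi\le\tilde L$). Next I would define a moving boundary of the form
\begin{equation}\label{4.28}
\vartheta(t)=h_0\Bigl(1+\delta-\tfrac{\delta}{2}e^{-\delta t}\Bigr),
\end{equation}
together with
\begin{equation}\label{4.29}
\overline U(x,t)=a_0 e^{-\delta t}\phi\!\left(\tfrac{xh_0}{\vartheta(t)}\right)e^{\frac{\mu}{2D_1}\left(1-\frac{h_0}{\vartheta(t)}\right)x},\qquad
\overline V(x,t)=a_0 e^{-\delta t}\psi\!\left(\tfrac{xh_0}{\vartheta(t)}\right),
\end{equation}
choosing $\delta>0$ small so that the differential inequalities \eqref{4.21}–\eqref{4.22} hold and hence $(\overline U,\overline V)$ satisfies the reaction-advection-diffusion inequalities exactly as in \eqref{4.23}–\eqref{4.24}. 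This part is essentially identical to the construction in Theorem \ref{t45}, and I would carry over those computations verbatim.

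The genuinely new point, and the step I expect to be the main obstacle, is the boundary condition $\vartheta'(t)\ge-\nu\overline U_x(\vartheta(t),t)$. In Theorem \ref{t45} this was arranged by shrinking the \emph{initial data}; here I must instead arrange it by shrinking $\nu$ while keeping $U_0,V_0$ fixed. The strategy is to compute $\vartheta'(t)=\tfrac{\delta^2 h_0}{2}e^{-\delta t}$ and to estimate $-\nu\overline U_x(\vartheta(t),t)$ using $\phi'(h_0)<0$; since $\overline U_x$ at the right boundary is proportional to $a_0\nu$ times a bounded factor, the Stefan inequality becomes a lower bound on how large $a_0$ (equivalently the amplitude) must be relative to $\nu$. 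I would then select $a_0$ large enough that $(\overline U,\overline V)$ dominates $(U_0,V_0)$ at $t=0$ — this is possible for any fixed initial data because $\phi,\psi$ are bounded below on compact subsets — while simultaneously requiring $\nu<\nu^*$ small enough that the Stefan boundary inequality survives; the precise threshold $\nu^*$ depends on $\|U_0\|_\infty,\|V_0\|_\infty$ through the required size of $a_0$.

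Once the ordered pair $(\overline U,\overline V;-\vartheta,\vartheta)$ is verified to be an upper solution of \eqref{1.3} in the sense of Lemma \ref{l22}, the Comparison Principle yields $g(t)\ge-\vartheta(t)$ and $h(t)\le\vartheta(t)$ for all $t>0$, whence
\[
h_\infty-g_\infty\le\lim_{t\to+\infty}2\vartheta(t)=2h_0(1+\delta)<\infty.
\]
Finally, applying Theorem \ref{t42} to this bounded spreading region gives $\lim_{t\to+\infty}\|U(\cdot,t)\|_{C(g(t),h(t))}=\lim_{t\to+\infty}\|V(\cdot,t)\|_{C(g(t),h(t))}=0$, completing the proof. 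The only delicate bookkeeping is decoupling the two smallness conditions (on $\nu$ versus on the data) that were bundled together in Theorem \ref{t45}, which the reference to Lemma 3.8 in \cite{du2010spreading} confirms is the intended route.
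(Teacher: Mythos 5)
Your proposal is correct and takes essentially the same route as the paper, which proves Theorem \ref{t47} simply by pointing to Lemma 3.8 of \cite{du2010spreading}: that argument is exactly your plan of reusing the upper solution $(\overline U,\overline V;-\vartheta,\vartheta)$ of Theorem \ref{t45} — whose interior inequalities \eqref{4.21}--\eqref{4.22} do not involve $a_0$ at all, so the amplitude may be rescaled freely — fixing $a_0$ large enough that $\overline U(\cdot,0)\ge U_0$, $\overline V(\cdot,0)\ge V_0$ for the given data, and then shrinking $\nu$ so that the Stefan inequalities at $x=\pm\vartheta(t)$ hold. One correction of wording only: since $\vartheta'(t)=\tfrac{\delta^2 h_0}{2}e^{-\delta t}$ is independent of $a_0$ and $\nu$ while $-\nu\overline U_x(\pm\vartheta(t),t)$ scales like $\nu a_0$, the Stefan condition is an \emph{upper} bound on the product $\nu a_0$ (of order $\delta^2$), not a ``lower bound on how large $a_0$ must be relative to $\nu$''; this is precisely why the decoupling works, and your conclusion that $\nu^*\propto 1/a_0$, hence depends on $\|U_0\|_{L_\infty}$ and $\|V_0\|_{L_\infty}$, is the right one.
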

	\section{\textbf{The spreading regime of WNv}}\label{s5}
	\noindent
	Next, we will discuss the spreading conditions of the disease and investigate the impact of $R^F_0(t)$ on the infected habitats and the densities of mosquitoes. It implies that the spreading will occur when $R^F_0(0)\geq1$.
	\begin{theorem}\label{t51}\quad
		If $R^F_{0}({0})=R^D_0((-h_0,h_0),\mu,D_1,D_2)\geq 1$, then $ h_{\infty}-g_{\infty}=\infty$ and $$\lim\limits_{t\rightarrow +\infty}inf||U(\cdot,t)||_{C(g(t),h(t))}>0, \lim\limits_{t\rightarrow +\infty}inf||V(\cdot,t)||_{C(g(t),h(t))}>0.$$ It means that the disease will spread.
	\end{theorem}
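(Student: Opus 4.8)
The plan is to establish the two conclusions separately: first that the infected region eventually fills the whole line, $h_\infty-g_\infty=\infty$, and then that the populations persist. Both rely on the monotone (cooperative) structure of (\ref{1.3}) together with the monotonicity of the risk index developed in Section \ref{s3}, and the persistence part is essentially the ``reverse'' of Theorem \ref{t43}.

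For the first conclusion I would argue by contradiction. Assume $h_\infty-g_\infty<\infty$. Then Theorem \ref{t43} gives $R^D_0((g_\infty,h_\infty),\mu,D_1,D_2)\le 1$. However, Lemma \ref{l26} (via Hopf's lemma applied to the positive solution $U$ at the free boundaries) yields $h'(t)>0$ and $g'(t)<0$ for every $t>0$, so $h_\infty>h_0$ and $g_\infty<-h_0$, whence $(g_\infty,h_\infty)\supsetneq(-h_0,h_0)$ and in particular $h_\infty-g_\infty>2h_0$. From formula (\ref{3.2}) the index $R^D_0$ is strictly increasing in the length of the interval, since enlarging $h-g$ strictly decreases the term $D_1(\pi/(h-g))^2$ in the denominator. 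Therefore $R^D_0((g_\infty,h_\infty),\mu,D_1,D_2)>R^D_0((-h_0,h_0),\mu,D_1,D_2)=R^F_0(0)\ge 1$, contradicting the bound from Theorem \ref{t43}. Hence $h_\infty-g_\infty=\infty$.

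For persistence I would freeze the domain at a large time and compare from below. Because the interval strictly expands, Property \ref{p32}(3) shows $R^F_0(t)$ is strictly increasing, so for any fixed $T>0$ one has $R^D_0((g(T),h(T)),\mu,D_1,D_2)>R^F_0(0)\ge 1$. Fix such a $T$ and let $(H,M)$ solve the fixed-domain problem on $(g(T),h(T))\times(T,\infty)$ with the reaction terms of (\ref{1.3}), homogeneous Dirichlet data at $x=g(T),h(T)$ and initial data $(U(\cdot,T),V(\cdot,T))$ --- that is, system (\ref{4.13}) with $\varepsilon=0$. Since the free boundaries keep expanding, $(g(T),h(T))\subset(g(t),h(t))$ for $t>T$, and on this sub-cylinder $U$ solves the same equations while $U>0=H$ on the lateral boundary; the maximum principle for cooperative systems then gives $U\ge H$ and $V\ge M$ on $[g(T),h(T)]\times[T,\infty)$. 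The field is cooperative on the invariant rectangle $[0,N_1]\times[0,N_2]$ because $a_1(N_1-U)\ge 0$ and $a_2(N_2-V)\ge 0$, so, as $R^D_0((g(T),h(T)),\mu,D_1,D_2)>1$, the monotone iteration / upper--lower solution method of Pao \cite{pao2012nonlinear} (already used in Theorem \ref{t43}) yields $(H,M)\to(\tilde H,\tilde M)$ uniformly on $[g(T),h(T)]$, with $(\tilde H,\tilde M)$ the unique positive steady state there. Since $||U(\cdot,t)||_{C(g(t),h(t))}\ge ||H(\cdot,t)||_{C(g(T),h(T))}\to\max\tilde H>0$, and similarly for $V$, the desired $\liminf$ bounds follow.

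The delicate step is the persistence argument, specifically the convergence of the frozen-domain system to a positive equilibrium. This requires (i) checking quasi-monotonicity on $[0,N_1]\times[0,N_2]$ so that comparison and monotone iterations are valid, and (ii) constructing an ordered pair of coupled upper and lower solutions; the nontrivial lower solution is precisely where $R^D_0>1$ enters, through the positive principal eigenfunction $(\phi,\psi)$ of Lemma \ref{l31} with $\lambda_0<0$, scaled small enough to sit beneath $(U(\cdot,T),V(\cdot,T))$. By comparison, the boundary-expansion conclusion is routine once Theorem \ref{t43} and the strict monotonicity of $R^D_0$ in the domain are in hand.
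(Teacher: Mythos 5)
Your proposal is correct, but it takes a genuinely different route from the paper. The paper proves persistence first and spreading second: when $R^F_0(0)>1$ it takes the principal eigenpair $(\lambda_0,(\phi,\psi))$ of Lemma \ref{l31} with $\lambda_0<0$ and checks directly that the time-independent pair $(\upsilon\phi,\upsilon\psi)$, $0<\upsilon\ll 1$, is a lower solution on the \emph{original fixed domain} $(-h_0,h_0)$ for all $t\geq 0$ (the boundary requirements hold because $(\pm h_0)'=0$ and $\phi'(-h_0)>0$, $\phi'(h_0)<0$); Lemma \ref{l22} then gives $U\geq \upsilon\phi$, $V\geq\upsilon\psi$ forever, so the $\liminf$ bounds are immediate, and $h_\infty-g_\infty=\infty$ follows from Theorem \ref{t42} (finite habitat would force decay, contradicting persistence). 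The borderline case $R^F_0(0)=1$ is handled by restarting at a time $t_0>0$ where $R^F_0(t_0)>1$. You invert this order: you get $h_\infty-g_\infty=\infty$ directly by contradiction from Theorem \ref{t43} together with Lemma \ref{l26} (strict motion of the fronts) and the strict monotonicity of $R^D_0$ in the interval length, which neatly absorbs the case $R^F_0(0)=1$ without any restart; you then obtain persistence by comparing with the frozen-domain problem (system (\ref{4.13}) with $\varepsilon=0$) at a later time $T$ and invoking Pao's monotone-iteration convergence to the positive steady state, exactly the machinery the paper deploys inside the proof of Theorem \ref{t43}. What each approach buys: the paper's eigenfunction lower solution is more elementary and self-contained --- a single comparison yields everything --- whereas your route reuses already-proved results, treats $R^F_0(0)\geq 1$ uniformly, and produces a stronger intermediate fact (convergence of the frozen system to its positive equilibrium, foreshadowing Theorem \ref{t54}(2)), at the cost of invoking the fixed-domain asymptotic theory whose own lower-solution construction is, as you acknowledge, the paper's eigenfunction argument in disguise.
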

	\begin{proof}
		In the case of $R^F_{0}({0})=R^D_0((-h_0,h_0),\mu,D_1,D_2)>1$, by Lemma \ref{l31}, there exists a principal eigenvalue $\lambda_0<0$ with positive eigenvalue function $(\phi(x),\psi(x))$ for the following problem
		\begin{equation}\label{5.01}
		\left\{\begin{array}{ll}{-D_{1} \phi_{x x}=-\mu\phi_{x}+a_{1} N_{1} \psi-\gamma \phi+\lambda_0\phi,} & {x\in (-h_0,h_0),} \\ {-D_{2} \psi_{x x}=a_{2} N_{2} \phi-d \psi+\lambda_0\psi,} & {x\in (-h_0,h_0), } \\ {\phi(x)=\psi(x)=0,} & { x=\pm h_0 .} \end{array}\right.
		\end{equation}
		Let us construct a lower solution to system (\ref{1.3}). Set
		$$\underline U(x,t)=\upsilon \phi(x),	\underline V(x,t)=\upsilon \psi(x),$$ where $x\in[-h_0,h_0],t\geq 0$ and $0<\upsilon\ll1 $ .
		
		Simple computation gives
		\begin{equation}\label{5.02}
		\begin{aligned}
		&\underline{U}_{t}-D_{1} \underline{U} _{x x}+\mu \underline{U}_x-a_1(N_1 - {\underline U}){\underline {V}}+\gamma \underline U
		\\&=\upsilon (-D_1\phi_{xx}+\mu \phi_x-a_1N_1\psi+\upsilon a_1 \phi\psi+\gamma \phi)
		\\&= \upsilon(\lambda_{0}\phi+\upsilon a_1 \phi\psi)
		\end{aligned}
		\end{equation}
		and
		\begin{equation}\label{5.03}
		\begin{aligned}
		&\underline{V}_{t}-D_{2} \underline{V}_{x x}-a_2(N_2-\underline {V}) \underline {U}+d \underline V
		\\&=\upsilon (-D_2\psi_{xx}-a_2N_2\phi+\upsilon a_2 \phi\psi+d \phi)
		\\&= \upsilon(\lambda_{0}\psi+\upsilon a_2 \phi\psi).
		\end{aligned}
		\end{equation}
		Since $\lambda_{0}<0$, recalling that $\phi^\prime(-h_0), \psi^\prime(-h_0)>0$ and $\phi^\prime(h_0), \psi^\prime(h_0)<0$, take $\upsilon$ sufficiently small such that
		\begin{equation}\label{5.04}
		\left\{\begin{array}{ll}{\underline{U}_{t}-D_{1} \underline{U}_{x x} \leq -\mu\underline{U}_{x}+a_{1}\left(N_{1}-\underline{U}\right) \underline{V}-\gamma \underline{U},} & {-h_0<x<h_0, \quad t>0,} \\ {\underline{V}_{t}-D_{2} \underline{V}_{x x} \leq a_{2}\left(N_{2}-\underline{V}\right) \underline{U}-d \underline{V},} & {-h_0<x<h_0, \enspace t>0,}  \\ {\underline{U}(x, t)\leq 0,\underline{V}(x, t)= 0,} & {x= \pm h_0, \quad t>0,} \\{0=(-h_0)^\prime\geq -\nu \underline U_x(-h_0,t),} & {t>0,}\\{0=(h_0)^\prime\leq -\nu \underline U_x(h_0,t),} & {t>0,}\\ {\underline{U}(0, t) \leq U(0, t), \quad \underline{V}(0, t) \leq V(0, t),} & {t>0.} \end{array}\right.
		\end{equation}
		Thus, by Lemma \ref{l22}, $U(x,t)\geq\underline U(x,t)$ and $V(x,t)\geq \underline V(x,t)$ for $x\in[-h_0,h_0]$ and $t\geq 0.$
		It implies that $\lim\limits_{t\rightarrow +\infty}inf||U(\cdot,t)||_{C(g(t),h(t))}\geq\upsilon \phi(0)>0$ and $\lim\limits_{t\rightarrow +\infty}inf||V(\cdot,t)||_{C(g(t),h(t))}\geq\upsilon \psi(0)>0.$ Moreover, by Theorem \ref{t42}, we get $h_\infty-g_\infty=\infty.$
		
		For $R^F_0(0)=R^D_0((-h_0,h_0),\mu,D_1,D_2)=1$,  by Property \ref{p32}, then $R^F_0(t_0)>R^F_0(0)=1$ for any $t_0>0$. Take the initial time from 0 to $t_0(>0)$ and repeat the above procedures, we can get $h_\infty-g_\infty=\infty.$
	\end{proof}
	\begin{remark}\label{r51}
		Assume that $(H)$ holds. $R^F_{0}(t_{0})\geq 1$ for some $t_{0}\geq 0$ if and only if the disease will spread. Indeed,  if $R^F_0(t)<0$ for any $t\geq 0$, then $h_\infty-g_\infty<\infty$, that is, the disease will be extinct.
	\end{remark}
	
	The following theorem can be obtained by constructing upper and lower sulution and analysis about the steady state of (\ref{1.3}) by Poincar{\'{e}}-Bendixson theorem and Proposition 2.1 in \cite{lewis2006traveling}.
	\begin{theorem}	\label{t52}
		When spreading occurs, the problem (\ref{1.3}) admits a unique coexistent steady state $E^{*}=(U^{*},V^{*})$, where $(U^{*},V^{*})$ is the unique globally asymptoyic stable endemic equilibrium of the following equation
		\begin{equation}\label{5.1}
		\left\{\begin{array}{ll}{\frac{du}{dt}=a_{1}(N_{1}-u)v-\gamma u,} & {t>0,} \\{\frac{dv}{dt}=a_{2}(N_{2}-v)u-d v,} & {t>0,} \end{array}\right.
		\end{equation}
		where $$U^{*}=\dfrac{a_1 a_2 N_1 N_2-\gamma d}{a_1 a_2 N_2 +a_2 \gamma },\enspace V^{*}=\dfrac{a_1 a_2 N_1 N_2-\gamma d}{a_1 a_2 N_1 + a_1 d}.$$
	\end{theorem}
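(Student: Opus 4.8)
The plan is to separate the argument into an ODE part, which pins down the candidate equilibrium and its stability, and a PDE part, which transfers these properties to the free boundary problem by comparison.

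First I would analyze the reaction system (\ref{5.1}) in isolation. Setting both right-hand sides to zero gives the nullclines $u = a_1 N_1 v/(a_1 v + \gamma)$ and $v = a_2 N_2 u/(a_2 u + d)$; substituting one into the other reduces to a single equation whose unique positive root produces exactly the stated $U^*$ and $V^*$. Under hypothesis $(H)$ we have $a_1 a_2 N_1 N_2 > \gamma d$, so both numerators are positive and $E^* = (U^*, V^*)$ is the unique interior equilibrium, the only other one being the trivial $(0,0)$. Note that because a spatial constant has vanishing first and second derivatives, $\mu U_x$ and the diffusion terms drop out, so $E^*$ is simultaneously the unique positive constant steady state of (\ref{1.3}).

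Next I would establish that $E^*$ is globally asymptotically stable in the biologically relevant rectangle $[0,N_1]\times[0,N_2]$. The key structural fact is that the system is cooperative there: $\partial_v[a_1(N_1-u)v-\gamma u]=a_1(N_1-u)\geq 0$ and $\partial_u[a_2(N_2-v)u-dv]=a_2(N_2-v)\geq 0$, and the rectangle is positively invariant since the vector field points inward on its boundary. By the Poincar\'e--Bendixson theorem every $\omega$-limit set is an equilibrium or a periodic orbit, and a Dulac/Bendixson argument (equivalently, the absence of nontrivial periodic orbits for planar cooperative systems) rules out the latter. Since $(0,0)$ is unstable exactly when $a_1 a_2 N_1 N_2 > \gamma d$, every trajectory from positive data converges to $E^*$; this is the content invoked through Proposition 2.1 of \cite{lewis2006traveling}.

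Finally I would transfer this to (\ref{1.3}) using the comparison machinery of Lemma \ref{l25} and Remark \ref{r23}. For the upper estimate, the spatially homogeneous solution of (\ref{5.1}) with initial value $(\sup U_0,\sup V_0)$ is an upper solution of (\ref{1.3}) and converges to $E^*$, so $\limsup_{t\to\infty}(U,V)\leq(U^*,V^*)$. For the lower estimate I would exploit that spreading means $h_\infty-g_\infty=\infty$: fixing a large $L$, eventually $[-L,L]\subset(g(t),h(t))$, and on this fixed interval I would use a positive steady state of the Dirichlet problem as a lower solution. Such a steady state exists once $R^D_0((-L,L),\mu,D_1,D_2)>1$, which holds for large $L$ since by Property \ref{p32} this index increases in $t$ toward $R_0(\mu)>1$ under $(H)$, and these large-domain steady states increase to the constant $E^*$ as $L\to\infty$. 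Comparison then gives $\liminf_{t\to\infty}(U,V)\geq(U^*,V^*)$ on compact sets, so $(U,V)\to E^*$ locally uniformly. The main obstacle is precisely this lower estimate: one must build a genuine lower solution compatible with the advection term $\mu U_x$ and the moving boundaries, and prove that the family of large-domain Dirichlet steady states converges to the spatially constant $E^*$ rather than to a nonconstant profile. Controlling the advection-induced boundary layer in this convergence, and matching the lower solution to the expanding free boundary, is the delicate step, whereas the upper estimate and the ODE analysis are comparatively routine.
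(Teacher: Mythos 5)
Your proposal is correct and follows essentially the same route as the paper: the paper gives only a one-line proof sketch for this theorem, invoking exactly the ingredients you use --- construction of upper and lower solutions, the Poincar\'{e}--Bendixson theorem, and Proposition 2.1 of \cite{lewis2006traveling} for the global stability of the positive equilibrium of the kinetic ODE system (\ref{5.1}). Your write-up simply supplies the details the paper delegates to references: the nullcline computation identifying $E^*$, the cooperative structure ruling out periodic orbits, the spatially homogeneous upper solution with supremum initial data, and the large-interval Dirichlet steady states serving as lower solutions once $h_\infty-g_\infty=\infty$.
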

	
	The following theorem implies the disease will spread when the expanding capability is sufficiently large, which is silimar to the arguments of \cite{tarboush2017spreading}.
	\begin{theorem}\label{t53}\quad
		Assume that $(H)$ holds. If $R^F_0(0)<1,$ then $h_\infty-g_\infty=\infty$ when $\nu$ is large enough.
	\end{theorem}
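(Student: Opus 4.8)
The plan is to exploit the monotonicity of the risk index in the domain length together with a lower-solution argument that forces the free boundaries to sweep out, in finite time, an interval longer than the critical length, and then to invoke the spreading criterion already established.

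First I would pin down the critical length. Since $(H)$ gives $|\mu|<\mu^*$, we have $\frac{\mu^2}{4D_1}+\gamma<\frac{a_1a_2N_1N_2}{d}$, hence $R_0(\mu)=\sqrt{\frac{a_1a_2N_1N_2}{(\frac{\mu^2}{4D_1}+\gamma)d}}>1$. By the explicit formula (\ref{3.4}) and Property \ref{p32}(3), $R^D_0((g,h),\mu,D_1,D_2)$ depends only on the length $h-g$, is strictly increasing in it, and tends to $R_0(\mu)>1$ as $h-g\to\infty$. Therefore there is a critical length $\ell^*>0$ such that $R^D_0((g,h),\mu,D_1,D_2)\ge 1$ whenever $h-g\ge\ell^*$. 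Because the coefficients in (\ref{1.3}) do not depend explicitly on $x$, the spreading argument of Theorem \ref{t51} applies verbatim with the initial time shifted to any $t_1>0$ and the (translated) current interval $(g(t_1),h(t_1))$ in place of $(-h_0,h_0)$. Hence, by Theorem \ref{t51} (equivalently Remark \ref{r51}), it suffices to produce a finite time $t_1$ with $h(t_1)-g(t_1)\ge\ell^*$: then $R^F_0(t_1)\ge 1$ and spreading, i.e. $h_\infty-g_\infty=\infty$, follows.

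The heart of the matter is thus to show that a large expansion rate $\nu$ drives the habitat past length $\ell^*$ in finite time. Following Lemma 3.8 in \cite{du2010spreading}, I would construct a coupled lower solution $(\underline U,\underline V;\underline g,\underline h)$ on a fixed time window $[0,t_1]$ whose support expands beyond $\ell^*$. Over a fixed window the solution of (\ref{1.3}) stays bounded below by a positive constant on a fixed neighborhood of the origin, uniformly in $\nu$---this follows from the strong maximum principle and the $\nu$-independent lower bound (\ref{2.13}) built from the ODE system (\ref{2.12}). The Hopf lemma then yields a strictly negative boundary gradient, so via the Stefan condition $h'(t)=-\nu U_x(h(t),t)$ (and symmetrically for $g$) the boundary speed is proportional to $\nu$. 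Concretely, taking the lower-solution profile to be a scaled principal eigenfunction carried on the moving interval $(\underline g(t),\underline h(t))$ with $\underline h'=-\nu\underline U_x(\underline h,t)$ and $\underline g'=-\nu\underline U_x(\underline g,t)$, one checks the reversed differential inequalities of Remark \ref{r23}, so the comparison principle gives $h(t)\ge\underline h(t)$ and $g(t)\le\underline g(t)$. Choosing $\nu$ large enough makes $\underline h(t)-\underline g(t)$ reach $\ell^*$ at some finite $t_1$, and then $h(t_1)-g(t_1)\ge\underline h(t_1)-\underline g(t_1)\ge\ell^*$, closing the argument.

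The main obstacle is this last paragraph: the boundary gradient is itself coupled to $\nu$ through the Stefan condition, so obtaining an expansion estimate that is genuinely monotone in $\nu$ is delicate. The remedy is to fix the profile and the time window first, bound $\underline U$ from below independently of $\nu$ on a compact inner region, and only then let large $\nu$ amplify the boundary speed. One must also handle the two components simultaneously, since the lower solution is a coupled pair; because $\underline V$ is driven by $\underline U$ through the term $a_2(N_2-\underline V)\underline U$, positivity of $\underline U$ propagates to $\underline V$ and the coupled inequalities close.
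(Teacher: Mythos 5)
The paper itself offers no proof of Theorem \ref{t53}: it only asserts that the result ``is similar to the arguments of \cite{tarboush2017spreading}'', so your proposal must stand on its own merits. Its first half does stand: under $(H)$ one has $R_0(\mu)>1$, so the explicit formula (\ref{3.4}) produces a finite critical length $\ell^*$ with $R^D_0\ge 1$ exactly when $h-g\ge\ell^*$, and the restart argument of Theorem \ref{t51} (i.e.\ Remark \ref{r51}) correctly reduces the theorem to reaching length $\ell^*$ in finite time. (Two side remarks: the bound (\ref{2.13}) is vacuous here, since $U_0(\pm h_0)=0$ forces $\inf U_0=0$ and hence the ODE lower solution of (\ref{2.12}) is identically zero --- a $\nu$-independent interior bound must instead come from comparison with the Dirichlet problem on the fixed interval $(-h_0,h_0)$; and Lemma 3.8 of \cite{du2010spreading} is the small-$\nu$ \emph{vanishing} lemma, the one this paper cites for Theorem \ref{t47}, not a template for the large-$\nu$ case.) The genuine gap is in your key step, in both of its ingredients. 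First, Hopf's lemma cannot give a boundary gradient bounded below uniformly in $\nu$: positivity of $U$ on a fixed inner region carries no quantitative information about $U_x$ \emph{at} the free boundary, and in the very scenario you must exclude (length staying below $\ell^*$) the Stefan condition itself yields $\int_0^{t}|U_x(h(s),s)|\,ds=(h(t)-h_0)/\nu\le \ell^*/\nu$, so the time-averaged boundary gradient is $O(1/\nu)$; nothing in your argument rules this out, so ``boundary speed proportional to $\nu$'' is assumed rather than proved. Second, the proposed lower solution --- a scaled principal eigenfunction carried on an expanding interval with equality in the Stefan conditions --- cannot satisfy the reversed inequalities of Remark \ref{r23}. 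Writing $\underline U=\epsilon(t)\phi(y)$ with $y$ the straightening variable, one has $\underline U_t=\epsilon'\phi+\epsilon\phi'(y)\,y_t$, and at a moving end $y_t=-y_x\underline h'$ (resp.\ $-y_x\underline g'$), so the term $\epsilon\phi'(y)y_t$ has size $\epsilon|\phi'(\pm h_0)|\times(\hbox{front speed})$ there and does \emph{not} vanish, while every term that could absorb it ($\epsilon'\phi$, the $\lambda_0$ and reaction terms) is a multiple of $\phi$ or $\psi$ and vanishes at the ends; hence the subsolution inequality fails near whichever front moves outward. Note the asymmetry with Theorem \ref{t45}: there the very same expanding eigenfunction profile is used as an \emph{upper} solution, and the domain-motion term then has the favorable sign; for a lower solution the sign is fatal. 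This is precisely why the literature does not prove large-$\nu$ spreading this way.

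What the cited lineage (Du and Lin's large-$\nu$ lemma, followed by \cite{tarboush2017spreading}) uses instead is integral, not pointwise, information from the Stefan condition. Compare $(U,V;g,h)$ from below with $(w,0;\eta_-,\eta_+)$, where $w$ solves the scalar free boundary problem $w_t-D_1w_{xx}+\mu w_x=-\gamma w$ with the same Stefan conditions and initial datum $U_0$; this is a legitimate lower solution because taking $\underline V\equiv 0$ makes the coupling term $a_1(N_1-\underline U)\underline V$ vanish. Integrating the $w$-equation over $(\eta_-(t),\eta_+(t))$ and using $w_x(\eta_\pm(t),t)=-\eta_\pm'(t)/\nu$ gives
\begin{equation*}
\frac{d}{dt}\Bigl[\int_{\eta_-(t)}^{\eta_+(t)}w\,dx+\frac{D_1}{\nu}\bigl(\eta_+(t)-\eta_-(t)\bigr)\Bigr]=-\gamma\int_{\eta_-(t)}^{\eta_+(t)}w\,dx,
\end{equation*}
so every unit of mass exiting through a front pushes that front by $\nu/D_1$ units. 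If $\eta_+-\eta_-$ stayed below $\ell^*$ for all time, then (writing $w=e^{-\gamma t}z$ with $z$ solving the undamped equation on the same bounded domains) a fixed fraction $\delta_0>0$ of $\int U_0$, independent of $\nu$, must exit through the fronts within a fixed time horizon, and the identity then forces $\eta_+-\eta_-\ge 2h_0+\nu\delta_0/D_1$, a contradiction once $\nu$ is large. Thus the habitat exceeds $\ell^*$ in finite time, and your restart step finishes the proof. In short: your frame (critical length plus restart) is right, but the engine must be this mass--flux bookkeeping through the Stefan condition, not a Hopf-amplified moving subsolution.
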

	
	In view of the above arguments, we can give the vanishing-spreading dichotomy regines of WNv.
	\begin{theorem}\label{t54}\quad
		Assume that $(H)$ holds. Let $(U(x,t),V(x,t);g(t),h(t))$ be the solution of system (\ref{1.3}), then the vanishing-spreading dichototmy regines hold:
		\\(1)Vanishing: $h_\infty-g_\infty<\infty$ and $\lim\limits_{t\rightarrow +\infty}(||U(\cdot,t)||_{C(g(t),h(t))}+||V(\cdot,t)||_{C(g(t),h(t))})=0$;
		\\(2)Spreading: $h_\infty-g_\infty=\infty$ and $\lim\limits_{t\rightarrow +\infty}U(x,t)=U^*,
		\lim\limits_{t\rightarrow +\infty}V(x,t)=V^*$ uniformly for $x$ in any compact subset of $\mathbb{R}.$
	\end{theorem}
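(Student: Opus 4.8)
The plan is to read off the dichotomy from the geometric alternative $h_\infty-g_\infty<\infty$ versus $h_\infty-g_\infty=\infty$, which is exhaustive, and then to identify each alternative with the stated asymptotics. First I would invoke Corollary \ref{t44}: under $(H)$, if either $h_\infty<\infty$ or $-g_\infty<\infty$ then $h_\infty-g_\infty<\infty$. Equivalently, whenever $h_\infty-g_\infty=\infty$ we must have both $h_\infty=+\infty$ and $g_\infty=-\infty$, so in the spreading case the moving domain $(g(t),h(t))$ eventually contains every bounded interval. This is exactly what is needed to make sense of uniform convergence on compact subsets of $\mathbb{R}$.

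The vanishing case $h_\infty-g_\infty<\infty$ is immediate: Theorem \ref{t42} gives $\|U(\cdot,t)\|_{C(g(t),h(t))}\to0$ and $\|V(\cdot,t)\|_{C(g(t),h(t))}\to0$ as $t\to\infty$, and summing the two limits yields conclusion $(1)$.

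The substantive work lies in the spreading case, where I must strengthen the positive $\liminf$ of Theorem \ref{t51} to convergence to the precise coexistence equilibrium $(U^*,V^*)$ of Theorem \ref{t52}, uniformly on compacta. I would squeeze $(U,V)$ from above and below. For the upper estimate, let $(\bar u,\bar v)$ solve the kinetic system (\ref{5.1}) with $\bar u(0)=N_1$, $\bar v(0)=N_2$; the spatially constant pair $(\bar u(t),\bar v(t))$ is a supersolution of (\ref{1.3}) on $(g(t),h(t))$ (its space derivatives vanish and it is positive on the free boundary where $U=V=0$), so the comparison argument of Lemma \ref{l25} gives $U\le\bar u$, $V\le\bar v$; since $(\bar u,\bar v)\to(U^*,V^*)$ by the global asymptotic stability asserted in Theorem \ref{t52}, we obtain $\limsup_{t\to\infty}U\le U^*$ and $\limsup_{t\to\infty}V\le V^*$ uniformly in $x$. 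For the lower estimate, fix $\ell>0$ and introduce the auxiliary Dirichlet problem on the fixed interval $(-\ell,\ell)$ carrying the same reaction and advection terms. By Property \ref{p32}(3), $R^D_0((-\ell,\ell),\mu,D_1,D_2)\to R_0(\mu)>1$ as $\ell\to\infty$ under $(H)$, so for $\ell$ large this problem admits a unique positive, globally attracting steady state $(\tilde U_\ell,\tilde V_\ell)$, and an elliptic-limit argument shows $(\tilde U_\ell,\tilde V_\ell)\to(U^*,V^*)$ locally uniformly. Since spreading forces $(g(t),h(t))\supset(-\ell,\ell)$ for $t$ beyond some $T_\ell$, where $U,V>0$ dominate the zero data at $x=\pm\ell$, Lemma \ref{l22} gives $U\ge\tilde U$, $V\ge\tilde V$ there; letting first $t\to\infty$ and then $\ell\to\infty$ yields $\liminf U\ge U^*$, $\liminf V\ge V^*$ on each compact set. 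Combining with the upper estimate closes $(2)$.

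The main obstacle I anticipate is the lower estimate: constructing the globally attracting steady state $(\tilde U_\ell,\tilde V_\ell)$ on the fixed domain and verifying the limit $(\tilde U_\ell,\tilde V_\ell)\to(U^*,V^*)$ as $\ell\to\infty$. Both rely on the cooperative (monotone) structure of the system and on the uniform a priori bounds from Lemma \ref{l25}, and the advection term $-\mu U_x$, which breaks spatial symmetry, must be carried through the eigenvalue and steady-state analysis; this is the step where the hypotheses $(H)$ and the threshold $\mu^*$ genuinely enter.
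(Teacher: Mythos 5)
Your proposal is correct in substance, but note that the paper never actually writes down a proof of Theorem \ref{t54}: it is presented as a summary theorem ("in view of the above arguments"), with case (1) delegated to Theorem \ref{t42} and case (2) delegated to Theorems \ref{t51} and \ref{t52}, the latter itself stated without proof (attributed to upper--lower solutions, the Poincar\'e--Bendixson theorem, and Proposition 2.1 of Lewis et al.). What you have done is supply the argument the paper leaves implicit, and your realization is the standard Du--Lin-type squeeze, fully consistent with the machinery the paper uses elsewhere: your spatially constant ODE supersolution is exactly the comparison with system (\ref{5.11}) carried out in Step 2 of the proof of Theorem \ref{t56}, and your fixed-interval Dirichlet problems with monotone iteration mirror the construction (\ref{4.13})--(\ref{4.14}) in the proof of Theorem \ref{t43}. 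Your use of Corollary \ref{t44} to conclude $h_\infty=+\infty$ and $g_\infty=-\infty$ in the spreading case is a point the paper glosses over entirely, yet it is genuinely needed for convergence to make sense uniformly on compact subsets of $\mathbb{R}$; flagging it is a real improvement.

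The one place your sketch stops short is the ``elliptic-limit argument'' $(\tilde U_\ell,\tilde V_\ell)\to(U^*,V^*)$, and since this carries the real content of case (2) it should be closed rather than asserted. A clean way: by minimality of the steady state produced by monotone iteration, $(\tilde U_\ell,\tilde V_\ell)$ is nondecreasing in $\ell$ and bounded above by the constant supersolution $(U^*,V^*)$, so it converges to a bounded entire solution $(\hat U,\hat V)\le (U^*,V^*)$ of the elliptic system on $\mathbb{R}$; translation invariance of the autonomous problem gives $\hat U(a)\ge \tilde U_\ell(0)>0$ for every $a\in\mathbb{R}$, hence a uniform positive lower bound; finally, the solution of the kinetic system (\ref{5.1}) started at the constant data $(\inf\hat U,\inf\hat V)$ stays below the stationary pair $(\hat U,\hat V)$ for all time by comparison on $\mathbb{R}$ and converges to $(U^*,V^*)$ by Theorem \ref{t52}, forcing $(\hat U,\hat V)=(U^*,V^*)$. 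Two minor citation points: the comparison of $(U,V)$ with $(\tilde U,\tilde V)$ on the fixed cylinder $(-\ell,\ell)\times(T_\ell,\infty)$ is an ordinary comparison principle for cooperative parabolic systems (or Remark \ref{r23} with $\underline{g}\equiv-\ell$, $\underline{h}\equiv\ell$), not literally Lemma \ref{l22}, which concerns free boundaries; and the limit $R^D_0((-\ell,\ell),\mu,D_1,D_2)\to R_0(\mu)>1$ follows from formula (\ref{3.2}) together with $|\mu|<\mu^*$, which is where hypothesis $(H)$ enters, exactly as you indicate.
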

	\section{Asymptotic spreading speed}\label{s6}
	In order to prevent the WNv from dispersing, it is essential to investigate the asymptotic spreading speed of the infected boundary.
	In Section 7 of \cite{lin2017spatial}, Lin and Zhu compared the definition of the minnimal wave speed, the spreading speed with the asymptotic spreading speed. Now, we aim to give the estimates of the asymptotic spreading speeds about the leftward front and rightward front for system (\ref{1.3}). For this purpose, we first recall a lemma.
	\begin{lemma}[Theorem 3.2 \cite{wang2019spreading}]\label{l55}\quad
		Assume that $a_{1}a_{2}N_{1}N_{2}>\gamma d$, then there exists a constant $c^{*}>0$ such that for every $c\in [0,c^{*}),$ the system
		\begin{equation}\label{5.2}
		\left\{\begin{array}{ll}
		{D_{1}u^{\prime\prime}-cu^{\prime}+a_{1}(N_{1}-u)v-\gamma u=0,} & { 0<s<\infty,} \\{D_{2}v^{\prime\prime}-cv^{\prime}+a_{2}(N_{2}-v)u-d v=0,} & {0<s<\infty,} \\ {(u(0),v(0) =(0,0),(u(\infty),v(\infty)) =(U^{*},V^{*})}
		\end{array}\right.
		\end{equation}
		admits a strictly increasing solution $(u_c,u_c)\in C^2(\mathbb{R^+})\times C^2(\mathbb{R^+})$ and there exists unique $c_\nu\in (0,c^*)$ such that $\nu u_{c_\nu}^{\prime}(0)=c_\nu$ for any $\nu>0.$	
	\end{lemma}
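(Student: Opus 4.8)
The plan is to treat (\ref{5.2}) as a semi-wave problem for the cooperative system with reaction terms $f(u,v)=a_1(N_1-u)v-\gamma u$ and $g(u,v)=a_2(N_2-v)u-dv$. On the biologically relevant box $[0,N_1]\times[0,N_2]$ one has $\partial f/\partial v=a_1(N_1-u)\ge 0$ and $\partial g/\partial u=a_2(N_2-v)\ge 0$, so the nonlinearity is quasi-monotone increasing; this cooperative structure is what makes comparison arguments and monotone iteration available, and together with $a_1a_2N_1N_2>\gamma d$ it guarantees (via Theorem \ref{t52}) that $(U^*,V^*)$ is the unique positive equilibrium and is globally asymptotically stable. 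The first step is to pin down $c^*$ through the linearization at the unstable state $(0,0)$: seeking $(u,v)\sim(\xi,\eta)e^{\lambda s}$ with $\xi,\eta>0$ leads to the characteristic equation
\begin{equation}\label{char}
(D_1\lambda^2-c\lambda-\gamma)(D_2\lambda^2-c\lambda-d)-a_1a_2N_1N_2=0 .
\end{equation}
Since the left side equals $\gamma d-a_1a_2N_1N_2<0$ at $\lambda=0$ and tends to $+\infty$ as $\lambda\to+\infty$, a positive real root always exists; I would define the threshold $c^*$ as the largest speed for which (\ref{char}) still admits an admissible pair of positive real roots yielding a monotone profile emanating from the origin, equivalently the minimal speed of a monotone front of the cooperative system on $\Real$, and show $0<c^*<\infty$ by a discriminant/monotonicity analysis of (\ref{char}) in $c$.

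Second, for fixed $c\in[0,c^*)$ I would construct the strictly increasing semi-wave by the upper--lower solution method with monotone iteration. The constant pair $(U^*,V^*)$ is a natural supersolution, while a subsolution is built from the positive eigenvector of (\ref{char}) near $s=0$, truncated and matched to a small constant; the condition $c<c^*$ is exactly what allows this ordered sub/supersolution pair with the correct boundary values $(u,v)(0)=(0,0)$. Solving the problem first on a bounded interval $[0,X]$ with $(u,v)(X)=(U^*,V^*)$, the cooperative monotone iteration produces an ordered solution; uniform Schauder and $L^p$ estimates independent of $X$ then let me pass to the limit $X\to\infty$ along a subsequence and recover a solution on $[0,\infty)$ with $(u(\infty),v(\infty))=(U^*,V^*)$. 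Strict monotonicity $u_c',v_c'>0$ follows from the strong maximum principle applied to the cooperative system satisfied by $(u_c',v_c')$, and uniqueness of the profile for each $c$ follows from a sliding argument combined with comparison (Lemma \ref{l22}).

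Third, to produce $c_\nu$ I would study the map $c\mapsto u_c'(0)$ on $[0,c^*)$. Continuity in $c$ comes from uniqueness of the profile together with the uniform estimates, while monotonicity — that $u_c'(0)$ is nonincreasing in $c$ — comes from comparing profiles of different speeds, exploiting once more the cooperative structure. At $c=0$ one has $u_0'(0)>0$, so $\nu u_0'(0)-0>0$, whereas as $c\uparrow c^*$ the profile degenerates and $u_c'(0)\to 0$, giving $\nu u_c'(0)-c\to -c^*<0$. Hence the continuous, strictly decreasing function $c\mapsto \nu u_c'(0)-c$ has a unique zero $c_\nu\in(0,c^*)$, which is the asserted value; this limiting behavior is also consistent with $c_\nu\to c^*$ as $\nu\to\infty$.

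The main obstacle, I expect, is the sharp characterization and finiteness of $c^*$ from (\ref{char}) for the $2\times 2$ system: the scalar intuition (where $c^*=2\sqrt{D f'(0)}$) must be replaced by a careful study of when (\ref{char}) loses its admissible positive real roots, and this is where the coupling between the two equations complicates the analysis. A secondary difficulty is the monotonicity of $c\mapsto u_c'(0)$, which underlies the uniqueness of $c_\nu$ and generally requires a delicate sliding/comparison argument rather than a direct computation.
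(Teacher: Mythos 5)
A preliminary remark: the paper gives no proof of Lemma \ref{l55} at all --- it is imported verbatim as Theorem 3.2 of \cite{wang2019spreading} --- so your sketch has to be judged against that reference's argument. Your overall skeleton does reproduce the standard (and essentially the cited) strategy: exploit the cooperative structure, construct sub/supersolutions and run a monotone iteration on finite intervals followed by a limit, use sliding for uniqueness of the profile, and then locate $c_\nu$ through continuity, monotonicity and a sign change of $c\mapsto \nu u_c'(0)-c$.

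The genuine gap is in your characterization of $c^*$, and it propagates into the one step where the hypothesis $c<c^*$ must actually do work. For your characteristic equation $(D_1\lambda^2-c\lambda-\gamma)(D_2\lambda^2-c\lambda-d)=a_1a_2N_1N_2$, a root $\lambda>0$ carries a \emph{positive} eigenvector $(\xi,\eta)$ only if both quadratic factors are negative, since $(D_1\lambda^2-c\lambda-\gamma)\xi=-a_1N_1\eta$ and $(D_2\lambda^2-c\lambda-d)\eta=-a_2N_2\xi$. At $c=0$ this is impossible: when both factors are negative their product equals $(\gamma-D_1\lambda^2)(d-D_2\lambda^2)\le\gamma d<a_1a_2N_1N_2$. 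In fact admissible positive real roots exist precisely for $c\ge c^*$, and $c^*$ is the \emph{smallest} such speed; your description of $c^*$ as the \emph{largest} speed admitting admissible roots is inverted, and contradicts your own (correct) identification of $c^*$ with the minimal speed of monotone fronts. Note also that the real root produced by your intermediate-value argument is one at which both factors are \emph{positive}, so its eigenvector has components of opposite signs and is of no use. Consequently, in the regime $c\in[0,c^*)$ --- the only regime in which the lemma asserts existence --- there is no ``positive eigenvector of the characteristic equation'' from which to build your subsolution: the admissible roots there are complex. The condition $c<c^*$ must enter existence by a different mechanism: in the scalar case via the sign-changing barrier $e^{cs/(2D)}\sin(\beta s)$ generated by the complex roots, and for this system via the positive principal Dirichlet eigenfunction of the linearization-with-drift on a large interval $(0,L)$, whose principal eigenvalue has the favorable sign for large $L$ exactly when $c<c^*$; this is how \cite{wang2019spreading} proceeds. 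Without such a lower barrier, your monotone iteration on $[0,X]$ from the supersolution $(U^*,V^*)$ carries no lower bound that survives $X\to\infty$, and nothing in your outline visibly fails when $c\ge c^*$ --- yet some step must fail there, since semi-waves do not exist for $c\ge c^*$; as written, your construction operates in exactly the complementary range of speeds. Relatedly, the limit $u_c'(0)\to 0$ as $c\uparrow c^*$ in your Step 3 is not mere ``degeneration'': it is deduced from that nonexistence at $c=c^*$ together with compactness, so the nonexistence statement needs to be proved rather than assumed.
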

	
	When the spreading happens, considering the small advection, we will give a sharper estimate for different asymptotic spreading speeds of the leftward and rightward fronts of (\ref{1.3}). This is a main contribution of this paper in studying WNv.
	\begin{theorem}\label{t56}\quad
		Assume that $0<\mu<\mu^{*}$ and $a_1a_2N_1N_2>d\gamma$. Let $(U,V;g,h)$ be a solution of (\ref{1.3}) with $h_{\infty}-g_{\infty}=\infty,$                                                                                     the asymptotic spreading speeds of the leftward front and rightward front satisfy
		\begin{equation}\label{5.3}
		\lim\limits_{t\rightarrow +\infty}\sup\limits\frac{-g(t)}{t}\leq c_{\nu}\leq \lim\limits_{t\rightarrow +\infty}\inf\limits\frac{h(t)}{t}.
		\end{equation}
		where $c_{\nu}$ is the asymptotic spreading speed of the problem (\ref{1.3}) without the advection term.
	\end{theorem}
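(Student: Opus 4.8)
The plan is to sandwich each moving front between translates of the no--advection semi-wave $(u_{c_\nu},v_{c_\nu})$ furnished by Lemma \ref{l55}, exploiting the fact that a one--signed advection ($0<\mu<\mu^{*}$) tilts the two fronts in opposite directions. Recall that $(u_{c_\nu},v_{c_\nu})$ is increasing, solves (\ref{5.2}) with $c=c_\nu$, connects $(0,0)$ to $(U^{*},V^{*})$, and obeys the Stefan matching $\nu u_{c_\nu}^{\prime}(0)=c_\nu$. The point is that substituting a \emph{rigidly translating} semi-wave into the advection operator leaves only an advective remainder whose sign is controlled by $u_{c_\nu}^{\prime}\ge 0$.

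For the rightward front I would set $\underline h(t)=c_\nu t+\mathrm{const}$ and define the candidate lower solution $\underline U(x,t)=u_{c_\nu}(\underline h(t)-x)$, $\underline V(x,t)=v_{c_\nu}(\underline h(t)-x)$. Writing $s=\underline h(t)-x$ and inserting the semi-wave ODE (\ref{5.2}) into the system, the $V$--equation holds with equality (there is no advection in $V$), while the parabolic defect of the $U$--equation collapses to
\[
\underline U_{t}-D_{1}\underline U_{xx}+\mu\underline U_{x}-a_{1}(N_{1}-\underline U)\underline V+\gamma\underline U=-\mu u_{c_\nu}^{\prime}(s)\le 0 ,
\]
since $\mu>0$ and $u_{c_\nu}^{\prime}\ge 0$. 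Moreover $\underline h^{\prime}(t)=c_\nu=\nu u_{c_\nu}^{\prime}(0)=-\nu\underline U_{x}(\underline h(t),t)$, so the free--boundary condition is met with equality. Thus the rightward translate of the semi-wave is a genuine subsolution, and Remark \ref{r23} yields $h(t)\ge\underline h(t)$, whence $\liminf_{t\to\infty} h(t)/t\ge c_\nu$.

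The leftward front is handled by the reflected construction: with $\overline g(t)=-c_\nu t+\mathrm{const}$ I would put $\overline U(x,t)=u_{c_\nu}(x-\overline g(t))$, $\overline V(x,t)=v_{c_\nu}(x-\overline g(t))$. The identical computation now produces defect $+\mu u_{c_\nu}^{\prime}\ge 0$, so this pair is a supersolution, the left Stefan condition $\overline g^{\prime}(t)=-c_\nu=-\nu\overline U_{x}(\overline g(t),t)$ again holds with equality, and Lemma \ref{l22} gives $g(t)\ge\overline g(t)$, that is $\limsup_{t\to\infty}(-g(t))/t\le c_\nu$. Combining the two bounds produces (\ref{5.3}).

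The main obstacle is not these sign computations but the domain matching: the semi-wave lives on a half-line and vanishes only at the moving front, whereas $(U,V)$ lives on the finite (though expanding) interval $(g(t),h(t))$, and the comparison lemmas require the sub/supersolution to vanish at \emph{both} of its boundaries and to be correctly ordered with $(U,V)$ at the initial comparison time. I would resolve this by invoking the spreading alternative (Theorem \ref{t54}): choosing a large $t_{0}$ and starting the lower front strictly inside the occupied region, say $\underline h(t)=c_\nu(t-t_{0})+h(t_{0})-M$ with $M\gg1$, the convergence $U,V\to U^{*},V^{*}$ on compacts guarantees that a slightly lowered, finitely truncated copy of the semi-wave sits below $(U,V)$ at $t_{0}$ and at its inner boundary, after which Remark \ref{r23} propagates $\underline h(t)\le h(t)$ for all $t\ge t_{0}$. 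Since $h(t_{0})-M-c_\nu t_{0}$ is an $O(1)$ constant, it does not affect the linear growth rate, so the speed bounds survive intact; the leftward estimate is entirely symmetric.
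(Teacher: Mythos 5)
Your construction is the same in spirit as the paper's (translating semi-wave sub/supersolutions, the sign of the advective defect $\mp\mu u'\,$, then the comparison principle), and your two defect computations are correct. But there is a genuine gap exactly at the point you flag as "domain matching," and the patch you propose does not close it. The obstruction is quantitative, not geometric: the unperturbed semi-wave $(u_{c_\nu},v_{c_\nu})$ of Lemma \ref{l55} has plateau exactly $(U^*,V^*)$, i.e. $u_{c_\nu}(s)\nearrow U^*$ as $s\to\infty$, while the solution satisfies only $U(0,t)\to U^*$ (and, for the upper bound, $\limsup_{t\to\infty}\max U\le U^*$ via the ODE comparison (\ref{5.11})--(\ref{5.12}), with possible overshoot at finite times). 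Since both quantities converge to the \emph{same} limit $U^*$ with no comparable rates, you can guarantee neither $\underline U(0,t)=u_{c_\nu}(\underline h(t))\le U(0,t)$ for all $t\ge t_0$ (needed for the subsolution on the inner boundary $x=0$), nor $\overline U(0,t)\ge U(0,t)$ (needed for the supersolution), nor the corresponding orderings at the initial comparison time. Your proposed fix --- "a slightly lowered, finitely truncated copy of the semi-wave" --- fails in both directions: subtracting a constant makes the subsolution negative at its front, destroying $\underline U(\underline h(t),t)=0$ and the Stefan matching $\underline h'=-\nu\underline U_x$; and for the supersolution the modification needed is \emph{raising}, not lowering, which (if done by scaling) violates the required inequality $\overline g'(t)\le-\nu\overline U_x(\overline g(t),t)$ unless the front speed is simultaneously changed --- none of which your proposal carries out.

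The paper's proof supplies precisely the missing device: it perturbs the removal rates, replacing $(\gamma,d)$ by $(\gamma+2\omega,d+2\omega)$ for the rightward front (system (\ref{5.5})) and by $(\gamma-2\sigma,d-2\sigma)$ for the leftward front (system (\ref{5.13})), and applies Lemma \ref{l55} to these perturbed systems. This produces semi-waves with speeds $c_\omega,c_\sigma\to c_\nu$ whose plateaus $(u^*_{-2\omega},v^*_{-2\omega})$ and $(u^*_{2\sigma},v^*_{2\sigma})$ lie \emph{strictly below}, respectively \emph{strictly above}, $(U^*,V^*)$. The strict gap is what lets the convergence of $(U,V)$ to $(U^*,V^*)$ on compact sets (spreading, Theorem \ref{t54}) and the bound (\ref{5.12}) verify all initial and inner-boundary orderings at some large time $T_1$ (resp.\ $T_2$), after which Lemma \ref{l22} and Remark \ref{r23} give $h(t)\ge\underline h(t)$ and $g(t)\ge\overline g(t)$; the conclusion for $c_\nu$ then follows by letting $\omega\to0^+$ and $\sigma\to0^+$. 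Note also that with this perturbation the advective term is not even needed to obtain the differential inequalities --- the $\pm2\omega$, $\pm2\sigma$ slack and the sign of $\mu u'$ both point the right way --- so the perturbation is the essential ingredient, and it (or an equivalent amplitude-and-speed rescaling argument, fully verified) must appear in any complete proof.
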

	\begin{proof}\quad
		We will divide the proof of this theorem into two steps.
		
		\textbf{Step 1.} we will show
		\begin{equation}\label{5.4}
		\lim\limits_{t\rightarrow +\infty}\inf\limits\frac{h(t)}{t}\geq c_{\nu}.
		\end{equation}
		For the small $\omega >0,$ construct the following auxiliary system motivated by Wang et al. \cite{wang2017asymptotic}
		\begin{equation}\label{5.5}
		\left\{\begin{array}{ll}
		{D_{1}u_{\omega}^{\prime\prime}-c_{\omega}u_{\omega}^{\prime}+a_{1}(N_{1}-u_{\omega})v_{\omega}-(\gamma+2\omega) u_{\omega}=0,} & { 0<s<\infty,} \\{D_{2}v_{\omega}^{\prime\prime}-c_{\omega}v_{\omega}^{\prime}+a_{2}(N_{2}-v_{\omega})u_{\omega}-(d+2\omega) v_{\omega}=0,} & {0<s<\infty,} \\ {(u_{\omega}(0),v_{\omega}(0) =(0,0),(u_{\omega}(\infty),v_{\omega}(\infty)) =(u_{-2\omega}^{*},v_{-2\omega}^{*}),}
		\end{array}\right.
		\end{equation}
		where $$u_{-2\omega}^{*}=\dfrac{a_1 a_2 N_1 N_2-(\gamma+2\omega)(d+2\omega)}{a_1 a_2 N_2 +(\gamma+2\omega)a_2}, v_{-2\omega}^{*}=\dfrac{a_1 a_2 N_1 N_2-(\gamma+2\omega)(d+2\omega)}{a_1 a_2 N_1 +(d+2\omega)a_1}.$$
		According to Lemma \ref{l55}, there exists unique $c_{\omega}=c(\nu,\omega)>0$ such that (\ref{5.5}) has a unique strictly increasing solution $(u_\omega,v_\omega)$ satisfying
		$$\nu u_{\omega}^{\prime}(0)=c_\omega, \lim\limits_{\omega\rightarrow 0^{+}}c_\omega=c_\nu.$$
		Since $h_\infty-g_\infty=\infty$, then $$(U(x,t),V(x,t))\rightarrow (U^{*},V^*)=\left(\dfrac{a_1 a_2 N_1 N_2-\gamma d}{a_1 a_2 N_2 +\gamma a_2},\dfrac{a_1 a_2 N_1 N_2-\gamma d}{a_1 a_2 N_1 +d a_1}\right)\enspace as\enspace t\rightarrow \infty$$
		uniformly for $x$ in any compact subset of $\mathbb{R}$. Hence, there exist large $T_1>0$ and $L\in (0,h(T_1))$ such that $$ h(t)>L, \enspace(U(x,t),V(x,t))\geq (u_{-\omega}^{*},v_{-\omega}^{*})>(u_{-2\omega}^{*},v_{-2\omega}^{*})$$
		for $x\in[0,L]$ and $t\geq T_1,$
		where $$u_{-\omega}^{*}=\dfrac{a_1 a_2 N_1 N_2-(\gamma+\omega)(d+\omega)}{a_1 a_2 N_2 +(\gamma+\omega)a_2}, \enspace v_{-\omega}^{*}=\dfrac{a_1 a_2 N_1 N_2-(\gamma+\omega)(d+\omega)}{a_1 a_2 N_1 +(d+\omega)a_1}.$$
		Let
		\begin{equation}\label{5.6}
		\begin{aligned}
		&\underline{h}(t)=c_\omega(t-T_1)+L,t\geq T_1,
		\\&\underline{U}(x,t)=u_\omega(\underline{h}(t)-x),0\leq x\leq\underline{h}(t),t\geq T_1,
		\\&\underline{V}(x,t)=v_\omega(\underline{h}(t)-x),0\leq x\leq\underline{h}(t),t\geq T_1.
		\end{aligned}
		\end{equation}
		Then $\underline{h}(T_1)=L<h(T_1)$ and
		$(\underline{U}(x,T_1),\underline{V}(x,T_1))=(u_\omega(\underline{h}(T_1)-x),v_\omega(\underline{h}(T_1)-x))
		\leq (u_{-\omega}^{*},v_{-\omega}^{*})\leq ({U}(x,T_1),{V}(x,T_1))$
		for $x\in[0,\underline{h}(T_1)]$.
		And we can see
		$$(\underline{U}(x,t),\underline{V}(x,t))=(u_\omega(\underline{h}(t)-x),v_\omega(\underline{h}(t)-x))\leq (u_{-\omega}^{*},v_{-\omega}^{*})\leq ({U}(x,t),{V}(x,t))$$
		$$(\underline{U}(\underline{h}(t),t),\underline{V}(\underline{h}(t),t))=(0,0),\enspace \underline{h}^\prime(t)=c_\omega=-\nu \underline{U}_{x}(\underline{h}(t),t) $$
		for $t\geq T_1.$
		Moreover, since $u_\omega ^\prime>0, 0< \mu<\mu^{*},$ we can get
		\begin{equation}\label{5.7}
		\begin{aligned}
		&\underline{U}_{t}-D_{1} \underline{U}_{x x}=c_\omega u_\omega ^\prime-D_1 u_{\omega}^{\prime\prime}
		\\&=a_1 (N_1 -u_\omega) v_\omega -(\gamma+2\omega)u_\omega
		\\&\leq a_1 (N_1 -u_\omega) v_\omega -\gamma u_\omega+\mu u_{\omega} ^\prime
		\\&= a_{1}\left(N_{1}-\underline{U}\right) \underline{V}-\gamma \underline{U} -\mu\underline{U}_{x}
		\end{aligned}
		\end{equation}
		and
		\begin{equation}\label{5.8}
		\begin{aligned}
		&\underline{V}_{t}-D_{2} \underline{V}_{x x}=c_\omega v_\omega ^\prime-D_2 v_{\omega}^{\prime\prime}
		\\&=a_{2}(N_{2}-v_{\omega})u_{\omega}-(d+2\omega) v_{\omega}
		\\&\leq a_{2}(N_{2}-v_{\omega})u_{\omega}-d v_{\omega}
		\\&= a_{2}\left(N_{2}-\underline{V}\right) \underline{U}-d \underline{V}.
		\end{aligned}
		\end{equation}
		Therefore, in view of the Comprison Principle, we can get $h(t)\geq \underline{h}(t)\enspace for\enspace t\geq T_1$ and
		\begin{equation}\label{5.9}
		\lim\limits_{t\rightarrow +\infty}\inf\limits\frac{h(t)}{t}\geq c_{\omega}.
		\end{equation}
		It follows that
		$\lim\limits_{t\rightarrow +\infty}\inf\limits\frac{h(t)}{t}\geq c_{\nu}$
		as $\omega\rightarrow 0.$
		
		\textbf{Step 2.} we will show
		\begin{equation}\label{5.10}
		\lim\limits_{t\rightarrow +\infty}\sup\limits\frac{-g(t)}{t}\leq c_{\nu}.
		\end{equation}
		Considering the following ODE system
		\begin{equation}\label{5.11}
		\left\{\begin{array}{ll}{u^{\prime}=a_{1}(N_{1}-u)v-\gamma u,} & { t>0,} \\{v^{\prime}=a_{2}(N_{2}-v)u-d v,} & {t>0,} \\ {u(0)=\sup\limits_{x\in[-h_{0},h_{0}]}U_{0}(x),}\\{v(0)=\sup\limits_{x\in[-h_{0},h_{0}]}V_{0}(x),} \end{array}\right.
		\end{equation}
		by Comprison Principle, we can obtain
		$$U(x,t),V(x,t)\leq(u(t),v(t)), for \enspace x\in[g(t),h(t)], t>0.$$
		By Theorem \ref{t52}, $(u(t),v(t))\rightarrow (U^*,V^*)$ as $t\rightarrow \infty.$ Therefore, we get
		\begin{equation}\label{5.12}
		\lim\limits_{t\rightarrow +\infty}\sup\limits\max_{x\in[g(t),h(t)]}U(x,t)\leq U^*, \lim\limits_{t\rightarrow +\infty}\sup\limits\max_{x\in[g(t),h(t)]}V(x,t)\leq V^*.
		\end{equation}
		Moreover, for any given small $\sigma>0,$ there exists $T_2>0$ such that
		$$(U(x,t),V(x,t)\leq(	U^*,V^*)\leq (u^*_{\sigma},v^*_{\sigma}) $$
		for $t\geq T_2$ and $x\in [g(t),h(t)]$,
		where
		$$u_{\sigma}^{*}=\dfrac{a_1 a_2 N_1 N_2-(\gamma-\sigma)(d-\sigma)}{a_1 a_2 N_2 +(\gamma-\sigma)a_2}, v_{\sigma}^{*}=\dfrac{a_1 a_2 N_1 N_2-(\gamma-\sigma)(d-\sigma)}{a_1 a_2 N_1 +(d-\sigma)a_1}.$$
		For the fixed $\sigma>0,$ construct the following auxiliary system
		\begin{equation}\label{5.13}
		\left\{\begin{array}{ll}
		{D_{1}u_{\sigma}^{\prime\prime}-c_{\sigma}u_{\sigma}^{\prime}+a_{1}(N_{1}-u_{\sigma})v_{\sigma}-(\gamma-2\sigma) u_{\sigma}=0,} & { 0<s<\infty,} \\{D_{2}v_{\sigma}^{\prime\prime}-c_{\sigma}v_{\sigma}^{\prime}+a_{2}(N_{2}-v_{\sigma})u_{\sigma}-(d-2\sigma) v_{\sigma}=0,} & {0<s<\infty,} \\ {(u_{\sigma}(0),v_{\sigma}(0) =(0,0),(u_{\sigma}(\infty),v_{\sigma}(\infty)) =(u_{2\sigma}^{*},v_{2\sigma}^{*}),}
		\end{array}\right.
		\end{equation}
		where
		$$u_{2\sigma}^{*}=\dfrac{a_1 a_2 N_1 N_2-(\gamma-2\sigma)(d-2\sigma)}{a_1 a_2 N_2 +(\gamma-2\sigma)a_2}, v_{2\sigma}^{*}=\dfrac{a_1 a_2 N_1 N_2-(\gamma-2\sigma)(d-2\sigma)}{a_1 a_2 N_1 +(d-2\sigma)a_1}.$$
		According to Lemma \ref{l55}, there exists unique $c_{\sigma}=c(\nu,\sigma)>0$ such that (\ref{5.13}) has a unique strictly increasing solution $(u_\sigma,v_\sigma)$ satisfying
		$$\nu u_{\sigma}^{\prime}(0)=c_\sigma, \lim\limits_{\sigma\rightarrow 0^{+}}c_\sigma=c_\nu.$$
		Since $(u_\sigma(\infty),v_\sigma(\infty))=(u_{2\sigma}^{*},v_{2\sigma}^{*})$, then there exists a $S_0\gg1$ such that
		$$(u^*_{\sigma},v^*_{\sigma})<(u_\sigma(S_0),v_\sigma(S_0)).$$
		Let
		\begin{equation}\label{5.14}
		\begin{split}
		&\overline g(t)=-c_\sigma(t-T_2)-S_0+g(T_2),\enspace t\geq T_2
		\\&\overline{U}(x,t)=u_\sigma(x-\overline g(t)),\enspace \overline g(t)\leq x\leq 0,\enspace t\geq T_2,
		\\&\overline{V}(x,t)=v_\sigma(x-\overline g(t)),\enspace \overline g(t)\leq x\leq 0,\enspace t\geq T_2.
		\end{split}
		\end{equation}
		Therefore, $$\overline g(T_2)=-S_0+g(T_2)<g(T_2),\enspace
		\overline g^{\prime}(t)=-c_\sigma=-\nu u_\sigma^{\prime}(0)=-\nu\overline{U}_x(\overline g(t),t)$$ for $t\geq T_2,$
		and
		$$\overline{U}(x,T_2)=u_\sigma(x+S_0-g(T_2))\geq u_\sigma(S_0)\geq U(x,T_2),$$
		$$\overline{V}(x,T_2)=v_\sigma(x+S_0-g(T_2))\geq v_\sigma(S_0)\geq V(x,T_2) $$
		for  $x\in[\overline g(T_2), 0].$ Moreover,
		$$\overline U(\overline g(t),t)=u_\sigma(0)=0,\enspace \overline V(\overline g(t),t)=v_\sigma(0)=0,$$
		$$\overline U(0,t)=u_\sigma(-\overline g(t))=u_\sigma(c_\sigma(t-T_2)+S_0-g(T_2))\geq u_\sigma(S_0)\geq U(0,t),$$
		$$\overline V(0,t)=v_\sigma(-\overline g(t))=v_\sigma(c_\sigma(t-T_2)+S_0-g(T_2))\geq v_\sigma(S_0)\geq V(0,t)$$
		for $t\geq T_2.$
		Since $u_\sigma^{\prime}>0$ and $0<\mu<\mu^*,$ then
		\begin{equation}\label{5.15}
		\begin{aligned}
		&\overline{U}_{t}-D_{1} \overline{U}_{x x}=c_\sigma u_\sigma ^\prime-D_1 u_{\sigma}^{\prime\prime}
		\\&=a_1 (N_1 -u_\sigma) v_\sigma -(\gamma-2\sigma)u_\sigma
		\\&\geq a_1 (N_1 -u_\sigma) v_\sigma -\gamma u_\sigma-\mu u_{\sigma} ^\prime
		\\&= a_{1}\left(N_{1}-\overline{U}\right) \overline{V}-\gamma \overline{U} -\mu\overline{U}_{x}
		\end{aligned}
		\end{equation}
		and
		\begin{equation}\label{5.16}
		\begin{aligned}
		&\overline{V}_{t}-D_{2} \overline{V}_{x x}=c_\sigma v_\sigma ^\prime-D_2 v_{\sigma}^{\prime\prime}
		\\&=a_{2}(N_{2}-v_{\sigma})u_{\sigma}-(d-2\sigma) v_{\sigma}
		\\&\geq a_{2}(N_{2}-v_{\sigma})u_{\sigma}-d v_{\sigma}
		\\&= a_{2}\left(N_{2}-\overline{V}\right) \overline{U}-d \overline{V}.
		\end{aligned}
		\end{equation}
		Therefore, in view of Comparison Principle, we can get $g(t)\geq \overline{g}(t),\enspace t\geq T_2$ and
		\begin{equation}\label{5.17}
		\lim\limits_{t\rightarrow +\infty}\sup\limits\frac{-g(t)}{t}\leq c_{\sigma}.
		\end{equation}
		It follows that
		$\lim\limits_{t\rightarrow +\infty}\sup\limits\frac{-g(t)}{t}\leq c_{\nu}$
		as $\sigma\rightarrow 0.$
		Therefore, our proof is completed.
	\end{proof}
	\begin{remark}\label{r416}\quad
		When the disease is spreading, if the small advection rate $0<\mu<\mu^*$, the asymptotic spreading speed of the leftward front is less than the rightward front. Similarly, if $-\mu^*<\mu<0,$ the asymptotic spreading speed of the leftward front is more than the rightward front. This fact implies that the advection term plays an important role in influencing WNv progapation speed.
	\end{remark}
	
	\section{Numerical simulations}\label{s7}
	\noindent
	In this section, we will provide some numerical simulations of our results by applying the Newton-Raphson method and a similar method as Razvan and Gabriel\cite{stef2008numerical} for the free boundary problem to investigate the impact of advection term on the transmission of West Nile virus.
	
	Take some parameter values of (\ref{1.3}) from \cite{lewis2006traveling}:
	\begin{equation}\label{6.1}
	\begin{aligned}
	&\dfrac{N_2}{N_1}=20,\alpha_1=0.88,\alpha_2=0.16.
	\end{aligned}
	\end{equation}
	Let $D_1=6,D_2=1,h_0=15,\gamma=0.6,$
	\begin{equation}\label{6.2}
	\begin{aligned}
	U_0(x) =
	\begin{cases}
	
	0.1*cos(\frac{\pi x}{2h_0}),       & x\in[-h_0,h_0] \\
	
	0,  & x\notin[-h_0,h_0]
	\end{cases},
	V_0(x) =
	\begin{cases}
	
	2* cos(\frac{\pi x}{2h_0}),       & x\in[-h_0,h_0] \\
	
	0,  &  x\notin[-h_0,h_0]
	\end{cases}
	\end{aligned}
	\end{equation}
	
	\subsection{Advection affects the basic reproduction number}
	\noindent
	\\First, we will study the impact of advection on the basic reproduction number. Fix $$\nu=2, \beta=0.3, a_1=0.88\times 0.3,a_2=0.16\times 0.3,d=0.3.$$
	Take $\mu=0$ and $\mu=3$, respectively, then $$R_0:=\dfrac{a_1a_2N_1N_2}{\gamma d}=1.408>1, R^F_0(0,0):=R^D_{0}((-h_{0},h_{0}),0,                                                                                                                                                                                                                                                                                                                                                                                                                                  D_{1},D_{2})=1.2241>1,$$ $$R^F_0(0,3):=R^D_{0}((-h_{0},h_{0}),3,                                                                                                                                                                                                                                                                                                                                                                                                                                                         D_{1},D_{2})=0.7831<1.$$ We choose small $U_0(x)$ and $V_0(x)$ as (\ref{6.2}), it can be seen that the density of mosquitoes tends to a positive steady state from Fig.1~(a) with $R^F_0(0,0)>1$, that is, the disease will spread and the infected region will expand to the whole habitat; while the density of the infected mosquitoes diseases to 0 as $t\rightarrow \infty$ from Fig.1~(b) with $R^F_0(0,3)<0$, it implies that the disease will vanish and the infected habitat will limit to a bounded region. This comparison identifies the significient impact of advection term.
	\begin{figure}[t]
		\centering
		\begin{tabular}{cc}
			\begin{minipage}[t]{1.5in}
				\includegraphics[width=1.5\textwidth]{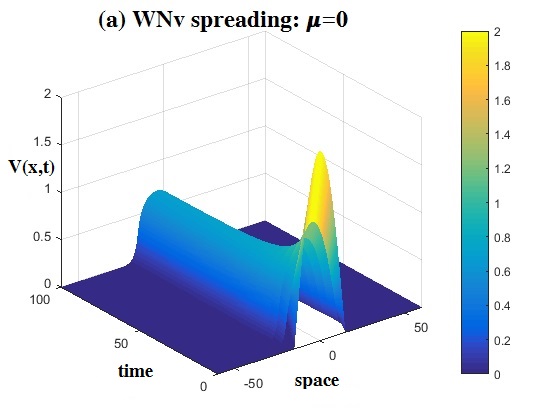}
				\label{fig:visual_smap_o}
			\end{minipage}
			\begin{minipage}[t]{1.5in}
				\includegraphics[width=1.5\textwidth]{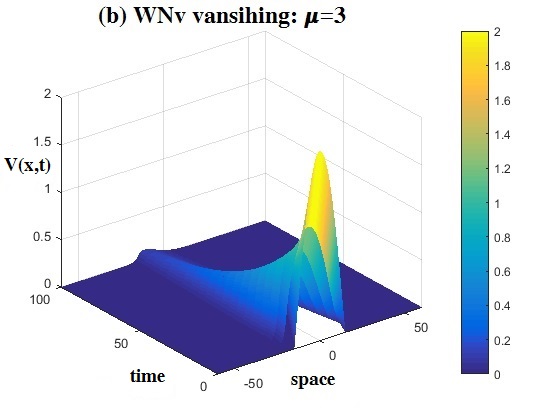}
			\end{minipage}
		\end{tabular}
		\caption{$R_0>R^F_0(0,0)>1>R^F_0(0,3)=0.7831$  }
		\label{fig:compare1}
	\end{figure}
	\subsection{Advection rate affects the spreading of boundary}
	\noindent
	\\Next, we will choose different advection intensities to study how they affect the spreading speed of boundaries. Fix
	$$\nu=4, \beta=0.5, a_1=0.88\times 0.5,a_2=0.16\times 0.5,d=0.029,$$ then $$\mu^{*}=2\sqrt{D_{1}(\dfrac{a_{1}a_{2}N_{1}N_{2}}{\gamma}-d)}\approx5.24.$$ Take $\mu=0$ and $\mu=2$, respectively, it is easy to see that the boundaries $x=g(t)$ and $x=h(t)$ expand differently with respect to $\mu$: the spreading speed of the right boundary is faster than the left boundary from Fig.2~(c) and Fig.2~(d) when $0<\mu<\mu^*$. Moreover, when spreading happens, the densities of birds and mosquitoes tend to a steady state in the long run and the infected habitat will expand to the whole area.
	
	\section{Discussions}\label{s8}
	\noindent
	The invasions of mosquitoes with West Nile virus, dengue fever virus and Zika virus or other virus have lead to many epidemic diseases risking people's health. Understanding the spatial dispersal  and dynamics of these virus plays a significant part in preventing and controlling infectious epidemics. In this paper, the dynamical behavior of a reaction-advection-diffusion WNv model with moving boundary conditions $x = g(t)$ and $x = h(t)$ about birds and mosquitoes is investigated by system (\ref{1.3}), the description of which is more compatible with the biological reality.
	
	Firstly, we introduce the spatial-temporal risk index $R^F_0(t)$ dependent on advection rate $\mu $ and spreading region $(g(t),h(t)$ as a threshold value to determine whether the disease will spread.   According to its definition, the advection rate can influence the values of $R^F_0(t)$ significantly (see to Section \ref{s3}). Next, we obtain some asymptotic properties of the temporal-spatial spreading of West Nile virus. Moreover, we get the sufficient and necessary conditions for spreading or vanishing of WNv. If
	$R^F_0(0)<1$ and initial data $||U_{0}||_{L_{\infty}},||V_{0}||_{L_{\infty}}$ are sufficiently small or $R^F_0(t)<1$ for any $t\geq0$, the epidemic will vanish and $\lim\limits_{t\rightarrow +\infty}(||U(\cdot,t)||_{C(g(t),h(t))}+||V(\cdot,t)||_{C(g(t),h(t))})=0$.  Under the assumption of $(H)$, $R^F_0(t_0)\geq 1$ for some $t_0\geq 0$ if and only if the disease will spread and the system (\ref{1.3}) admits a unique  stable positive equilibrium $(U^*,V^*)$ when the spreading occurs.
	
	\begin{figure}[t]
		\centering
		\begin{tabular}{cc}
			\begin{minipage}[t]{1.5in}
				\includegraphics[width=1.3\textwidth]{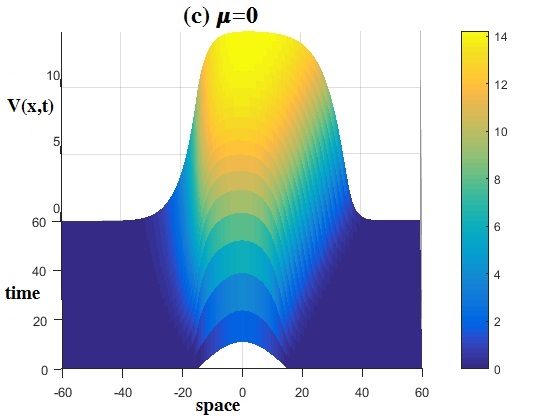}
				\label{fig:visual_smap_o}
			\end{minipage}
			\begin{minipage}[t]{1.5in}
				\includegraphics[width=1.3\textwidth]{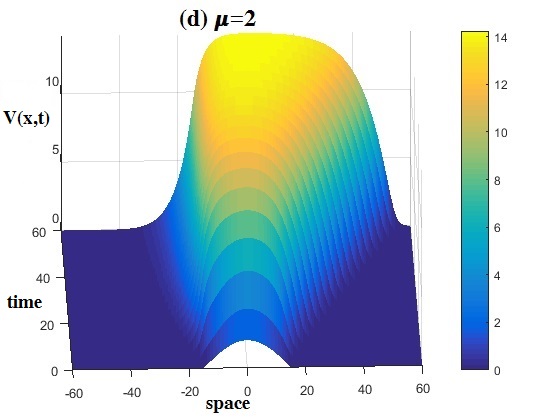}
			\end{minipage}
		\end{tabular}
		\caption{$\nu=4, a_1=0.88\times 0.5, a_2=0.16\times 0.5$, $\mu^{*}\approx5.24$
		}
		\label{fig:compare1}
	\end{figure}
	
	Assume that the habitat at far distance is in high risk, we mainly consider the effect of the small advection movement on the double spreading fronts of WNv. On the one hand, when the advection term $\mu$ becomes larger from $0$ to $\mu^{*}$, then disease becomes more difficult to spread. On the other hand, when the spreading occurs and $0< \mu<\mu^{*},$ we prove that the asymptotic spreading speed of the leftward is less than the rightward front:$	\lim\limits_{t\rightarrow +\infty}\sup\limits\frac{-g(t)}{t}\leq c_{\nu}\leq \lim\limits_{t\rightarrow +\infty}\inf\limits\frac{h(t)}{t}.$
	At last, we give some numerical simulations to investigate the impact of advection movement on the basic reproduction number and  the spreading of boundary (see to Section \ref{s7}). These simulation results are coincidate with the arguments by Maidana and Yang in \cite{maidana2009spatial}.
	
	Therefore, people can take measures to prevent diseases from dispersing according to our analysis and simulations. Furthermore, our reaction-advection-diffusion model of West Nile virus with double free boundaries can also be applied to study other cooperative systems of mosquito-borne diseases.
	In the future, the hign-dimensional system with free boundaries of WNv model will be explored. Moreover, the temperature, humidity and migration of birds with seasonality will be taken into our consideration to study the spreading of WNv.


\end{document}